\newtheorem{thm}{Theorem}
\theoremstyle{plain}
\newtheorem{theorem}{Theorem}[subsection]
\newtheorem{lemma}[theorem]{Lemma}
\newtheorem{proposition}[theorem]{Proposition}
\newtheorem{corollary}[theorem]{Corollary}
\theoremstyle{definition}
\newtheorem{definition}[theorem]{Definition}
\theoremstyle{remark}
\newtheorem{example}[theorem]{\it Example\/}
\newtheorem{question}[theorem]{Question}
\newcommand{\Z}{\mathbb Z}
\newcommand{\R}{\mathbb R}
\newcommand{\C}{\mathbb C}
\newcommand{\Sph}{\mathbb{S}}
\newcommand\lk{\mathrm{lk}}
\newcommand{\mama}[4]{\left( \begin{matrix} #1 & #2 \\ #3 & #4 \end{matrix} \right)}
\newcommand{\wt}{\widetilde}
\newcommand{\bord}{\partial}
\newcommand{\MuraOrdre}{\prec}
\newcommand{\MuraOrdreLarge}{\preceq}
\newcommand{\Hopf}{\mathcal H}
\newcommand{\surf}{\Sigma}
\newcommand{\rect}[2]{R_{#1}^{#2}}
\newcommand{\donnepar}[1]{\overset{\tau#1}{\longmapsto}}
\newcommand{\col}{c}
\newcommand{\bbot}{b}
\newcommand{\ttop}{t}
\newcommand{\rec}{r}
\newcommand{\botcol}{b_c}
\newcommand{\topcol}{t_c}
\newcommand{\botrec}{b_{r}}
\newcommand{\toprec}{t_{r}}
\newcommand{\botrecc}{b_{r{+}1}}
\newcommand{\toprecc}{t_{r{+}1}}
\newcommand{\lray}[1]{R^\nwarrow_{#1}}
\newcommand{\rray}[1]{R^\nearrow_{#1}}
\newcommand{\vray}[1]{R^\downarrow_{#1}}
\newcommand{\topcell}[1]{t(#1)}
\newcommand{\botcell}[1]{b(#1)}
\newcommand{\acces}{A}
\newcommand{\accesg}{B}
\newcommand{\accesd}{C}
\newcommand{\carre}{\Omega}
\newcommand{\carreb}{\Omega'}
\newcommand{\cy}[1]{[#1]}
\newcommand{\cybis}[1]{[#1]_\sigma}
\newcommand{\DT}{\tau}
\newcommand{\DTgeom}{\tau}
\newcommand{\equerre}[1]{E_{#1}}
\newcommand{\MurasommeB}{\mathbin{\mathtt{\#}}}
\newcommand{\surfd}{\Sigma_D}
\newcommand{\seifbis}{\wt{\Sigma_D}}
\begin{document}

\title{On the zeroes of the Alexander polynomial of a Lorenz knot}

\author{Pierre Dehornoy}

\subjclass[2000]{Primary 57M27; Secondary 34C25, 37B40, 37E15, 57M25}
\keywords{Lorenz knot, Alexander polynomial, monodromy, surface homeomorphism}

\date{\today}

\begin{abstract}
We show that the zeroes of the Alexander polynomial of a Lorenz knot all lie in some annulus whose width depends explicitly on the genus and the braid index of the considered knot.
\end{abstract}

\maketitle 


Lorenz knots~\cite{BW} are a family of knots that arise in the context of dynamical systems as isotopy classes of periodic orbits of the Lorenz flow~\cite{Lorenz}, a particular flow in~$\R^3$. They received much attention in the recent years because they form a relatively large family that includes all torus knots and all algebraic knots and, at the same time, they are not too complicated and their geometric origin in dynamical systems provides specific tools to study them~\cite{BK, EM, GhysLorenz}. On the other hand, the Alexander polynomial is a classical knot invariant, that is, a polynomial which only depends on the topological type of the knot. It is known that any polynomial~$\Delta$ with integer coefficients that is symmetric, in the sense that the inverse of every zero is also a zero, and sastisfying~$\vert\Delta(1)\vert=1$, is the Alexander polynomial of at least one knot~\cite{Kawauchi}. Therefore it seems hard to expect much in the direction of controlling the zeroes of the Alexander polynomial of an arbitrary knot. By contrast, the result we prove in this paper asserts that, in the case of a Lorenz knot, the zeroes of the Alexander polynomial must lie in some definite annulus depending on the genus of the knot (that is, the smallest genus of a surface spanning the knot) and the braid index (that is, the smallest number of strands of a braid whose closure is the knot):

\begin{thm}
\label{Theoreme}
Let $K$ be a Lorenz knot. Let $g$ denote its genus and $b$ its braid index. Then the zeroes of the Alexander polynomial of~$K$ lie in the annulus
\begin{equation*}
\left\{ z\in\C \,\big\vert\, (2g)^{-4/(b-1)} \le \vert z \vert \le (2g)^{4/(b-1)} \right\}.
\end{equation*}
\end{thm}

This implies in particular that, among the Lorenz knots that can be represented by an orbit of length at most~$t$, the proportion of knots for which all zeroes of the Alexander polynomial lie in an annulus of diameter~$O(t^{c/t})$ tends to~$1$ when $t$ goes to infinity (Corollary~\ref{Coro}).
 
The possible interest of Theorem~\ref{Theoreme} is double.
First, it provides an effective, computable criterion for proving that a knot is not a Lorenz knot (Corollary~\ref{C:NotLorenz}). 

Second, Theorem~\ref{Theoreme} may be seen as a first step in the direction of understanding Alexander polynomials of orbits of general flows. Given a flow~$\Phi$ in~$\R^3$, it is natural to look at its periodic orbits as knots, and to wonder how these knots caracterize the flow \cite{GHS}. Let us call~$k(x,t)$ the piece of length~$t$ of the orbit of~$\Phi$ starting at~$x$, closed with the geodesic segment connecting~$\Phi^t(x)$ to~$x$. Then~$k(x,t)$ is a loop. In most cases, this loop has no double points, thus yielding a knot. Arnold~\cite{Arnold} studied the linking number of two such knots. In the case of an ergodic volume-preserving vector field, he showed that the limit~$\lim_{t_1, t_2\to\infty} \lk(k(x_1,t_1), k(x_2, t_2))/t_1t_2$ exists and is independent of the points~$x_1, x_2$, thus yielding a topological invariant for the flow. It turns out that this knot-theoretical invariant coincides with the helicity of the vector field. Later, Gambaudo and Ghys in the case of $\omega$-signatures \cite{GamGhys} and Baader and March\'e in the case of Vassiliev invariants~\cite{BaaderMarche} established similar asymptotic behaviours, with all involved constants proportional to helicity. It is then natural to wonder whether other knot-theoretical invariants have analogous behaviours, and, if so, whether the constants are connected with the helicity. For instance, numerical experiments suggest that the $3$-genus might obey a different scheme, but no proof is known so far. On the other hand, the Alexander polynomial is a sort of intermediate step between signatures and genus: its degree is bounded from below by all signatures, and from above by twice the genus. Therefore, controlling the asymptotic behaviour of the Alexander polynomial and its zeroes is a natural task in this program. It is known that the zeroes on the unit circle are determined by the collection of all~$\omega$-signatures, but nothing was known for other zeroes, and this is what Theorem~\ref{Theoreme} provides, in the case of Lorenz knots.

The principle of the proof of Theorem~\ref{Theoreme} consists in interpreting the modulus of the largest zero of the Alexander polynomial of a Lorenz knot as the growth rate of the associated homological monodromy. 
More precisely, as every Lorenz knot~$K$ is the closure of a positive braid of a certain type~\cite{BW}, we start from the standard Seifert surface~$\Sigma$ associated with this braid. As the involved braid is necessarily positive, $\Sigma$ can be realized as an iterated Murasugi sum~\cite{Murasugi} of positive Hopf bands. Then, we interpret the Alexander polynomial of~$K$ as the characteristic polynomial of the homological monodromy~$h_*$ of~$K$, an endomorphism of the first homology group~$H_1(\Sigma; \Z)$, which is well defined because $K$ is fibered with fiber~$\Sigma$. From here, our goal is then to bound the growth rate of~$h_*$. To this end, we use the decomposition of~$\Sigma$ as an iterated Murasugi sum to express the geometric monodromy of~$K$ as a product of positive Dehn twists, and we deduce an expression of the homological monodromy~$h_*$ as a product of transvections. The hypothesis that the knot is a Lorenz knot implies that the pattern describing how the Hopf bands are glued in the Murasugi decomposition of~$\Sigma$ is very special. By using this particularity and choosing a (tricky) adapted basis of~$H_1(\Sigma; \Z)$, we control the growth of the $\ell^1$-norm of a cycle when the monodromy is iterated. Finally, the bound on the $\ell^1$-norm induces a bound on the eigenvalues of ~$h_*$, and, from there, a bound on the zeroes of the Alexander polynomial of~$K$.

It may be worth noting that our main argument is more delicate than what one could a priori expect. Indeed, using the standard Murasugi decomposition of the Seifert surface, which is obtained by attaching all disks behind the diagram (Figure~\ref{F:SurfaceStandard}), cannot work for our purpose. Instead we must consider a non-standard decomposition also obtained by applying the Seifert algorithm, but by attaching half of the disks in front of the diagram and half of the disks behind (Figure~\ref{F:RadiateurLorenz}).

As suggested by the above sketch of proof, Theorem~\ref{Theoreme} can be interpreted in terms of growth rate of surface homeomorphisms. Namely, if $K$ is a Lorenz knot with Seifert surface~$\Sigma$ and monodromy~$h$, then what we do is to control the growth rate of the induced action~$h_*$ on homology. If one consider directly the action of~$h$ on curves on~$\Sigma$, then Thurston~\cite{FLP, Thurston} defined a number that control how curves are stretched by~$h$. It is called the \emph{dilatation} of~$h$. The dilatation has been the subject of intense studies, and in particular determining the minimal possible dilatation on a surface of fixed genus is still an open problem~\cite{BBK, Hironaka2, HironakaKin, LT, Penner}. In general, the homological growth rate is smaller than the dilatation, so that our main result has no consequences related to the dilatation. However, as an important tool of our proof (Lemma~\ref{T:ImageInterieure}) holds also for curves, we formulate a similar conjecture for the dilatation, see Section~\ref{S:Questions}.

Computer experiments played an important role during the preparation of this paper. Propositions~\ref{T:DecompositionMurasugi} and~\ref{T:DecompositionMurasugiBis} below lead to an algorithm for computing the homological monodromy of Lorenz knots, and we ran it on large samples of thousands of knots. Using Bar-Natan's package KnotAtlas\footnote{\url{http://katlas.org/}} to double-check the value of the Alexander polynomial, we obtained strong evidence for the formulas of Sections~\ref{S:FirstPass} and~\ref{S:SecondPass} before their proof was completed. Also, the choice of the surface~$\seifbis$ in Section~\ref{S:MixedSurface} was directly inspired by the computer experiments.

The plan of the paper is as follows. In Section~\ref{S:Preliminaries}, we recall the definitions of Lorenz knots, Lorenz braids, and the associated Young diagrams. Then we describe Murasugi sums, and explain how they preserve fiberedness and compose monodromies. Finally, we construct for every Lorenz knot a standard Seifert surface using an iterated Murasugi sum of Hopf bands, and deduce an explicit formula for the monodromy. In Section~\ref{S:Combinatorics}, starting from the standard decomposition of the Seifert surface, we first develop a combinatorial analysis of the homological monodromy, and explain what is missing to derive a bound for the growth rate. Then we consider another Murasugi decomposition, and show how to adapt the combinatorial analysis of the monodromy. In Section~\ref{S:Radius}, we use the latter analysis for bounding the eigenvalues of the monodromy, thus proving Theorem~\ref{Theoreme}. We then give some examples and conclude with a few questions and further observations.

\thanks{I thank \'Etienne Ghys for many enlightening discussions, Hugh Morton, who taught me the basic material of this article, in particular the Murasugi sum, during a visit at Liverpool, Joan Birman and the anonymous referee for many remarks and corrections.}

\section{Preliminaries}
\label{S:Preliminaries}

The aim of this section is to express the homological monodromy of every Lorenz knot as an explicit product of transvections (Proposition~\ref{T:FormuleMonodromie}).

It is organized as follows. We first recall the basic definitions about Lorenz knots starting from Young diagrams. Then, we describe the Murasugi sum  in Section~\ref{S:Topologie} and the iterated Murasugi sum in Section~\ref{S:MurasugiSum}. Finally, we use the Murasugi sum in Section~\ref{S:StandardSurface} to give a geometric construction of the Seifert surface associated to a Lorenz knot and derive the expected expression of the homological monodromy.


\subsection{Lorenz knots, Lorenz braids, and Young diagrams}
\label{S:Young}

Lorenz knots and links were introduced by Birman and Williams~\cite{BW} as isotopy classes of sets of periodic orbits of the geometric Lorenz flow~\cite{Lorenz} in~$\R^3$. They are closure of Lorenz braids. It is explained in~\cite{EM} how to associate a Young diagram with every Lorenz braid. Here we shall go the other way and introduce Lorenz braids starting from Young diagrams.

\begin{definition}
\label{D:Lorenz}
Let $D$ be a Young diagram, supposed to be drawn as in Figure~\ref{F:Young} left; extend the edges both up and down so that it looks like the projection of a braid, orient the strands from top to bottom, and desingularize all crossings positively. The braid~$b_D$ so obtained (Figure~\ref{F:Young} right) is called the~\emph{Lorenz braid} associated with~$D$, and its closure~$K_D$ is called the \emph{Lorenz knot} associated with~$D$.
\end{definition}

\begin{figure}[htbp]
	\includegraphics[width=0.8\textwidth]{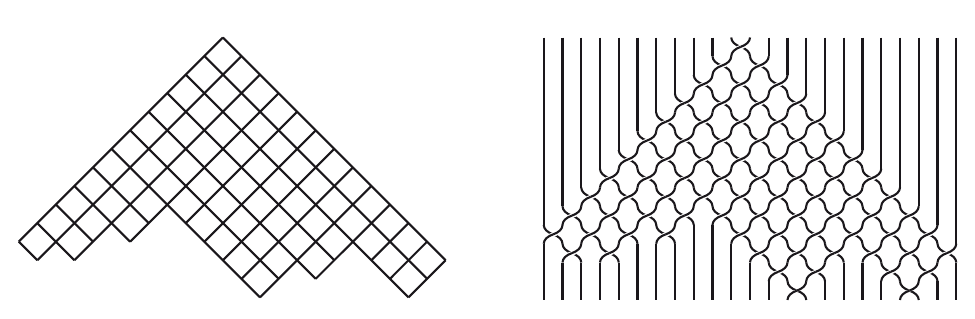}
	\caption{\small How to transform a young diagram into a Lorenz braid.}
	\label{F:Young}
\end{figure}

\begin{example} Consider the Young diagram with columns of heights~$2,1,1$ respectively. Then the associated Lorenz braid is~$\sigma_4\sigma_3\sigma_5\sigma_2\sigma_4\sigma_6\sigma_1\sigma_3\sigma_5\sigma_2$. Its closure turns out to be the $(5,2)$-torus~knot, which is therefore a Lorenz knot.
\end{example}

It may happen that the closure of a Lorenz braid has more than one component, and should therefore be called a Lorenz \emph{link}, instead of a knot. Many properties of Lorenz knots are shared by Lorenz links, but their complement can admit several non isotopic fibrations. This is a problem for our approach. Therefore, in the sequel, we always implicitely refer to Young diagrams and Lorenz braids which give rise to Lorenz knots, and not to Lorenz links.

Let us introduce some additional notation. Let $D$ be a Young diagram. We give coordinates to cells (see Figure~\ref{F:Coordinates}) by declaring the top cell to be $(0,0)$, by adding~$(-1,1)$ when going on an adjacent $SW$-cell, and by adding~$(1,1)$ when going on an adjacent $SE$-cell. Thus coordinates always have the same parity. The \emph{$c$th column} consists of the cells whose first coordinate is~$c$. Integers~$t_c, b_c$ are defined so that $(c, t_c)$ is the top cell, and~$(c, b_c)$ the bottom cell, of the $c$th column. Observe that we always have $t_c= \vert c \vert$. The column on the left of the diagram is denoted by~$c_l$. Observe that it contains the cell~$(c_l, -c_l)$ only. Similarly the column on the right is denoted by~$c_r$, and it contains the cell~$(c_r, c_r)$ only.

\begin{figure}[htbp]
	\begin{center}
	\begin{picture}(180,130)(0,0)
	\put(0,0){\includegraphics[width=0.4\textwidth]{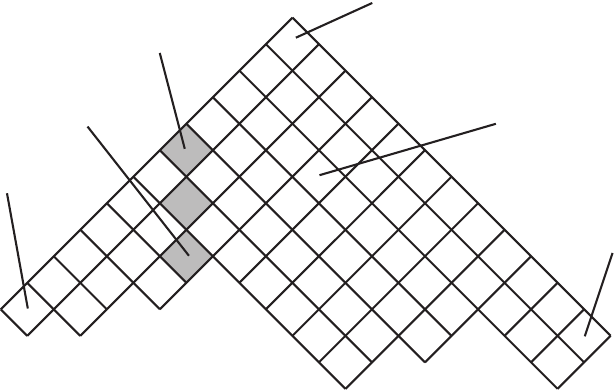}}
	\put(109,109){$(0,0)$}
	\put(146,75){$(1,5)$}
	\put(162,43){$(11,11)$}
	\put(-20,60){$(\col_l,-\col_l)$}
	\put(10,79){$(\col,\bbot_\col)$}
	\put(35,102){$(\col,\ttop_\col)$}
	\end{picture}
	\end{center}
	\caption{\small Coordinates in a Young diagram. The $c$th column, with~$c=-4$, is in grey.}
	\label{F:Coordinates}
\end{figure}


\subsection{Murasugi sum, fibered links and monodromy}
\label{S:Topologie}

By definition, Lorenz knots are closures of positive braids. An important consequence is that they are fibered~\cite{BW}, and that the monodromy homeomorphism is a product of positive Dehn twists. In order to understand and use these properties, we recall a simple and very geometric operation: the Murasugi sum~\cite{Murasugi, Gabai1, Gabai2}. The idea is to iteratively construct the fibration of the complement of a knot by adding the crossings of the braid one by one. For this, we use two-component Hopf links as building blocks, and the Murasugi sum as a gluing tool. 

From now on, we work in the sphere~$\Sph^3$, identified with $\R^3\cup\{\infty\}$. 

\begin{definition}
\label{D:Hopf}
~

\begin{itemize}
\item[($i$)] A~\emph{positive Dehn twist} is a map from $[0,1]\times\Sph^1$ into itself isotopic to~$\DTgeom$ defined by~$\DTgeom(r, \theta) = (r, \theta+r)$. 
\item[($ii$)] Let~$\Sigma$ be a surface and~$\gamma$ be an immersed smooth curve in~$\Sigma$. Consider a tubular neighbourhood~$A$ of~$\gamma$ in~$\Sigma$, and parametrize it by~$[0,1]\times\Sph^1$ so that the orientations coincide. A \emph{positive Dehn twist along $\gamma$} is the class of the homeomorphism~$\DT_\gamma$ of~$\Sigma$ that coincides with a positive twist of the annulus~$A$ and that is the identity outside. 
\item[($iii$)] By extension, A positive Dehn twist along~$\gamma$ is the induced automorphism~$\DT_\gamma$ of the module~$H_1(\Sigma, \bord\Sigma; \Z)$.
\end{itemize}
\end{definition}

	When the surface~$\Sigma$ in an annulus, a natural basis for~$H_1(\Sigma, \bord\Sigma; \Z)$ is made of the core of the annulus, and a transversal radius. Then, the matrix of a positive Dehn twist is~$\mama 1 1 0 1$, so that the homological twist is a transvection (Figure~\ref{F:ToreHopf} right).

\begin{proposition}
\label{T:MonodromieHopf}
	The complement of a positive, two-component Hopf link in~$\Sph^3$ fibers over~$\Sph^1$, the fiber being an annulus and the monodromy a positive Dehn twist.
\end{proposition}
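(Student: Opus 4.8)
The plan is to identify the positive, two-component Hopf link with the link of the plane-curve singularity $zw=0$ and to read off the fibration and its monodromy from the associated Milnor fibration, the monodromy being exhibited by an explicit horizontal flow. Concretely, view $\Sph^3$ as $\{(z,w)\in\C^2\mid |z|^2+|w|^2=1\}$ and set $L=\{z=0\}\cup\{w=0\}$. With the orientations coming from the complex structure, the two components of $L$ are fibres of the Hopf fibration and link positively, so $L$ is the positive two-component Hopf link (with the opposite sign convention one replaces $zw$ by $z\bar w$ everywhere below). The candidate fibration is the Milnor fibration of the germ $(z,w)\mapsto zw$, namely
\begin{equation*}
\pi\colon\Sph^3\setminus L\longrightarrow\Sph^1,\qquad \pi(z,w)=\frac{zw}{|zw|};
\end{equation*}
that $\pi$ is a locally trivial fibration is classical, and it also follows from the explicit horizontal flow constructed below.

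\emph{The fibre.} The fibre $F=\pi^{-1}(1)=\{(z,w)\in\Sph^3\mid zw\in\R_{>0}\}$ is parametrised by $(t,s)\in(0,\tfrac\pi2)\times(\R/2\pi\Z)\mapsto(\cos t\,e^{is},\,\sin t\,e^{-is})$. Hence its closure $\widehat F$ in $\Sph^3$ is a compact annulus with $\bord\widehat F=L$, and its core is $\gamma=\{(\tfrac1{\sqrt2}e^{is},\tfrac1{\sqrt2}e^{-is})\mid s\in\R/2\pi\Z\}$. Setting $u=|z|^2\in[0,1]$ gives an identification $\widehat F\cong[0,1]\times(\R/2\pi\Z)$ in which $\gamma=\{u=\tfrac12\}$ and $L=\{u=0\}\cup\{u=1\}$.

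\emph{The monodromy.} Fix a smooth nonincreasing $a\colon[0,1]\to[0,1]$ with $a\equiv1$ near $0$ and $a\equiv0$ near $1$, and put
\begin{equation*}
\phi_\tau(z,w)=\bigl(e^{2\pi i\tau\,a(|z|^2)}\,z,\ e^{2\pi i\tau(1-a(|z|^2))}\,w\bigr).
\end{equation*}
Each $\phi_\tau$ is a diffeomorphism of $\Sph^3$ preserving $\Sph^3\setminus L$, and $\pi\circ\phi_\tau=e^{2\pi i\tau}\pi$, so $\phi_\tau$ is horizontal for $\pi$; near $\{z=0\}$ it reads $(z,w)\mapsto(e^{2\pi i\tau}z,w)$ and near $\{w=0\}$ it reads $(z,w)\mapsto(z,e^{2\pi i\tau}w)$, i.e. near $L$ it coincides with the standard rotation of the normal disk. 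Hence $\phi_\tau$ is a correctly normalised monodromy flow for $\pi$, so $h:=\phi_1|_{\widehat F}$ represents the monodromy and is the identity on a collar of $\bord\widehat F$. In the coordinates $(u,s)$ above one gets $h(u,s)=(u,\,s+2\pi a(u))$: $h$ preserves each circle $\{u=\mathrm{const}\}$ and rotates it by $2\pi a(u)$, an angle running monotonically over one full turn as $u$ goes from $0$ to $1$. By Definition~\ref{D:Hopf} this is exactly a Dehn twist along the core $\gamma$, and it is positive because $L$ is the positive Hopf link (the sign is read off from the complex orientation of $\widehat F$ and the coorientation of $\gamma$).

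\emph{Main obstacle.} The genuine subtleties here are bookkeeping ones: matching the complex-geometric orientation conventions with the knot-theoretic sign of the Hopf link, and the fact that the monodromy of a fibred link is only well defined once one requires it to be the identity near the boundary. This is precisely why $a$ is taken locally constant near $0$ and $1$: the naive horizontal flow $(z,w)\mapsto(e^{2\pi i\tau}z,w)$ returns to the identity at $\tau=1$ but is not normalised near $\{w=0\}$, and using it would wrongly suggest a trivial monodromy, just as a Dehn twist is trivial once one forgets the boundary. Everything else is a routine verification; alternatively one may simply invoke the classical description of the Milnor fibration of $zw$.
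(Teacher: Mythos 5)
Your proof is correct, but it follows a genuinely different route from the paper. The paper argues pictorially (Figure~\ref{F:ToreHopf}): it draws the positive Hopf band on the Heegaard torus of~$\Sph^3$, observes that each of the two complementary solid tori is foliated by meridian disks that sweep out ``half'' of the monodromy, and composes the two halves --- exchanging meridians and parallels between the two tori --- to read off the positive Dehn twist. You instead identify the link with the singularity link of $zw=0$ and use the Milnor fibration $\pi(z,w)=zw/|zw|$, exhibiting the monodromy via an explicit horizontal flow $\phi_\tau$ that is normalized near $L$ by a cut-off $a$; checking that $\phi_1$ restricted to the annular fiber is a full turn of the core circle. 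Your computation is correct: $\pi\circ\phi_\tau = e^{2\pi i\tau}\pi$, $\phi_\tau$ is the standard normal-disk rotation near each component of~$L$, and in the coordinates $(u,s)=(|z|^2,\arg z)$ on~$\widehat F$ one indeed gets $h(u,s)=(u,s+2\pi a(u))$, which is a Dehn twist along $\gamma$. What your approach buys is an explicit, coordinate formula that generalizes immediately (e.g.\ to $z^pw^q$ and torus links) and makes the boundary-normalization issue transparent; what the paper's picture buys is a proof whose ingredients (Heegaard torus, meridian disks, gluing fibrations across a separating surface) are exactly the ones reused in the Murasugi-sum argument of Theorem~\ref{T:sommefibre}, so it fits the expository arc of the section. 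The one place you are terse is the sign: you assert positivity ``because $L$ is the positive Hopf link,'' which is correct but could be made explicit by checking that, with the complex orientation of $\widehat F$ and the boundary orientation coming from $B^4$, the twist $h$ shears a transversal to $\gamma$ in the positive direction, equivalently that $\lk(\{z=0\},\{w=0\})=+1$ for the complex orientations.
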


\begin{proof}
We use Figure~\ref{F:ToreHopf} for the proof: 
on the left, a positive Hopf link is depicted as the boundary of an annulus, both being drawn on the boundary of a solid torus. 
In the center left, we see one half of the monodromy, corresponding to what happens on one meridian disk inside the solid torus. 
Since the complement of the solid torus in~$\Sph^3$ is another solid torus---meridians and parallels being exchanged---the monodromy is the composition of the map from the green annulus to the white one, and of its analog from the white annulus to the green one obtained by a $90^\circ$-rotation. 
It is the positive Dehn twist depicted on the center right. 
The action on cycles is displayed on the right: the core (in green) remains unchanged, while the radius (in red) is mapped on a curve winding once along the core (in orange).
\end{proof}

\begin{figure}[htb]
	\begin{center}
	\includegraphics[width=0.6\textwidth]{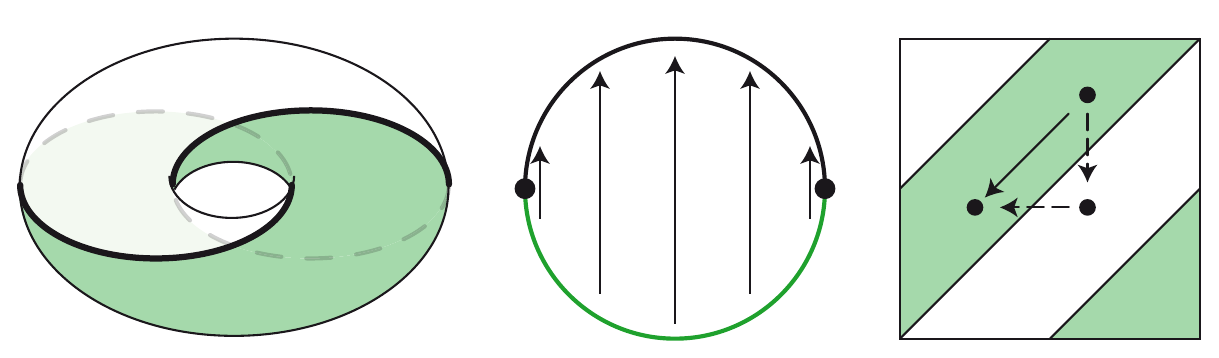}\quad
	\includegraphics[width=0.17\textwidth]{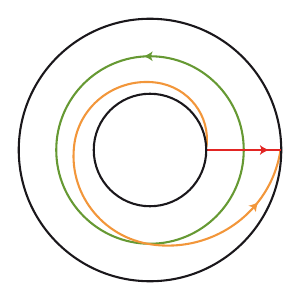}
	\end{center}
	\caption{\small On the left, a positive Hopf link as the boundary of an annulus. In the center left, one half of the monodromy. On the right, the action of the monodromy on cycles.}
	\label{F:ToreHopf}
\end{figure}

\begin{definition}
\label{D:sommemurasugi}
	(See Figure~\ref{F:sommeMurasugi}.)
	Let $\surf_1$ and $\surf_2$ be two oriented surfaces embedded in $\Sph^3$ with respective boundaries~$K_1$ and $K_2$. 
	Let~$\Pi$ be an embedded sphere (seen as the horizontal plane in~$\R^3\cup\{\infty\}$). Call~$B_1$ and $B_2$ the open balls that~$\Pi$ separates. 
	Suppose that
\begin{itemize}
\item[($i$)] the surface $\surf_1$ is included in the closure of the ball~$B_1$, and $\surf_2$ in the closure of~$B_2$;
\item[($ii$)] the intersection $\surf_1\cap\surf_2$ is a $2n$-gon, denoted~$P$, contained in~$\Pi$ with the orientations of $\surf_1$ and~$\surf_2$ on~$P$ coinciding and pointing into~$B_2$;
\item[($iii$)] the links $K_1$ and $K_2$ intersect at the vertices of~$P$, that we denote by~$x_1, \dots, x_{2n}$.
\end{itemize}
We then define the \emph{Murasugi sum $\surf_1\MurasommeB_P\surf_2$ of $\surf_1$ and $\surf_2$ along $P$} as their union $\surf_1\cup\surf_2$. 
	We define the \emph{Murasugi sum $K_1\MurasommeB_P K_2$ of $K_1$ and~$K_2$ along~ $P$} as the link~$K_1\cup K_2 \smallsetminus \bigcup\, ] x_i , x_{i+1} [$.
\end{definition}

More generally, we define the Murasugi sum of two disjoint surfaces~$\surf_i, i=1,2$ along two polygons~$P_i$ with one specified vertex as the isotopy class of the Murasugi sum of two isotopic copies of~$\surf_i$ respecting conditions~$(i), (ii)$ and $(iii)$ of Definition~\ref{D:sommemurasugi} and such that the polygons~$P_i$ and the specified vertices coincide. As we might expect, this surface is unique up to isotopy. We denote it by~$\surf_1\MurasommeB_{P_1\sim P_2}\surf_2$.

\begin{figure}[htbp]
	\begin{center}
	\includegraphics[scale=0.3]{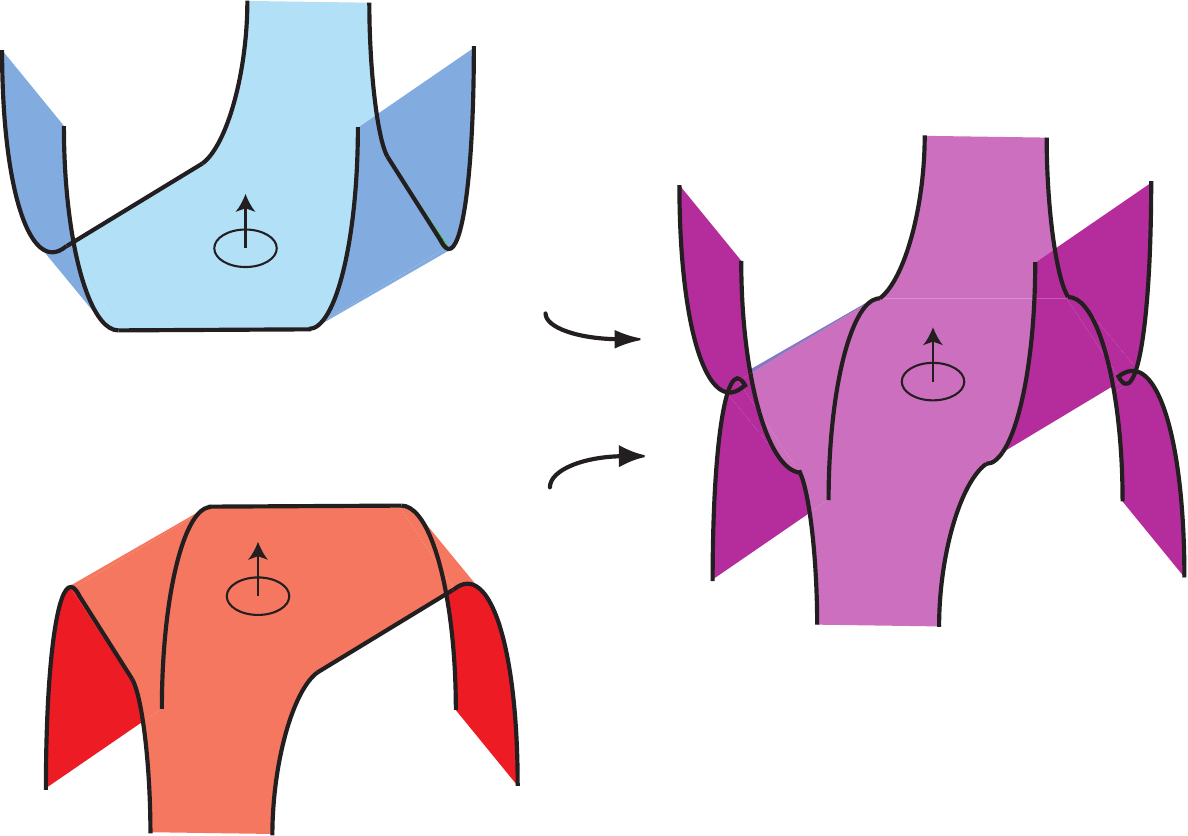}
	\end{center}
	\caption{\small The Murasugi sum of two surfaces with boundary.}
	\label{F:sommeMurasugi}
\end{figure}

The Murasugi sum generalizes the connected sum, which corresponds to the case~$n=1$ in the definition. It also generalizes plumbing, which corresponds to~$n=2$. It is a natural geometric operation for surfaces---and for the links they bound---in the sense that it preserves important properties, like, for instance, being incompressible, being a minimal genus spanning surface, or being a fibered link (see~\cite{Gabai1, Gabai2, EM} and below).

\begin{theorem}
	\label{T:sommefibre}
	Let~$K_1$ and $K_2$ be two fibered links in~$\Sph^3$ with respective fibers~$\surf_1$ and~$\surf_2$. Let~$h_1$ and~$h_2$ be the class of their respective geometric monodromies. Let $P_1$ ({\it resp.\ }$P_2$) be a $2n$-gon on~$\surf_1$ ({\it resp.\ }$\surf_2$) whose even ({\it resp.\ }odd) edges are included in the boundary~$K_1$ ({\it resp.\ }$K_2$) of $\surf_1$ ({\it resp.\ }$\surf_2$). Then
\begin{itemize}
\item[($i$)] the Murasugi sum $K_1\MurasommeB_{P_1\sim P_2} K_2$ is fibered with fiber $\surf_1\MurasommeB_{P_1\sim P_2}\surf_2$;
\item[($ii$)] the monodromy of~$K_1\MurasommeB_{P_1\sim P_2} K_2$ is~$h_1\circ h_2$, where $h_1$ ({\it resp.\ }$h_2$) is extended as an application of~$\surf_1\MurasommeB_{P_1\sim P_2}\surf_2$ by the identity on the complement~$\Sigma_2\smallsetminus\Sigma_1$ ({\it resp.}~$\Sigma_1\smallsetminus\Sigma_2$).
\end{itemize}
\end{theorem}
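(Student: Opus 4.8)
This is a classical theorem of Stallings and Gabai~\cite{Gabai1, Gabai2}; we indicate the line of argument, which is also the one used in practice below. Throughout we use the standard reformulation of fiberedness: a link $K$ bounding a Seifert surface $\Sigma$ is fibered with fiber $\Sigma$ and monodromy $h$ exactly when $\Sph^3\smallsetminus\mathrm{int}\,N(K)$, cut open along $\Sigma$, is a product $\Sigma\times[0,1]$, the monodromy $h\colon\Sigma\to\Sigma$ (fixing $\partial\Sigma$ pointwise, and which one is free to isotope rel $\partial\Sigma$ so as to be supported away from any prescribed boundary-parallel sub-disk) being the re-gluing homeomorphism that identifies $\Sigma\times\{0\}$ with $\Sigma\times\{1\}$. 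So the task is to show that, cut open along $\Sigma:=\Sigma_1\MurasommeB_{P_1\sim P_2}\Sigma_2$, the exterior of $K_1\MurasommeB_{P_1\sim P_2}K_2$ is a product, and to identify its re-gluing map with $h_1\circ h_2$.

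Write $\Sph^3=\bar B_1\cup_\Pi\bar B_2$ using the separating sphere $\Pi$, with $\Sigma_1\subset\bar B_1$, $\Sigma_2\subset\bar B_2$, and $\Sigma_1\cap\Pi=\Sigma_2\cap\Pi=P$ the polygon obtained by identifying $P_1$ and $P_2$. By hypothesis $\Sph^3\smallsetminus\mathrm{int}\,N(K_i)$ cut along $\Sigma_i$ is a product $\Sigma_i\times[0,1]$. The crux --- this is the substance of Gabai's analysis, that a Murasugi sum of product sutured manifolds is a product~\cite{Gabai1, Gabai2} --- is that the fibrations of the two exteriors can be isotoped to be compatible with $\Pi$, in such a way that cutting the exterior of $K_1\MurasommeB K_2$ along $\Sigma$ reproduces exactly the Murasugi-type gluing of the two products $\Sigma_1\times[0,1]$ and $\Sigma_2\times[0,1]$ along a product neighbourhood of the polygon $P$; and such a gluing of two products along a boundary disk positioned in the prescribed way is again a product $\Sigma\times[0,1]$. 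This proves~$(i)$. Technically the compatibility is obtained by a product-disk (innermost-disk) decomposition argument, and this is where essentially all of the work lies.

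For~$(ii)$ one reads off the re-gluing map of the product $\Sigma\times[0,1]$ so obtained. Away from the polygon $P$, near the $\Sigma_1$--part the surgered exterior agrees with the exterior of $K_1$, so the re-gluing map there is $h_1$; near the $\Sigma_2$--part it is $h_2$; and near $P$ both monodromies have been normalized to be trivial, so the two local descriptions match. Since in the normalization $h_1$ is supported in the interior of $\Sigma_1$ and is the identity near $P$, this re-gluing map is exactly $h_1$ extended by the identity on $\Sigma_2\smallsetminus\Sigma_1$, followed by $h_2$ extended by the identity on $\Sigma_1\smallsetminus\Sigma_2$, that is, $h_1\circ h_2$. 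The cases $n=1$ (connected sum) and $n=2$ (plumbing, which already yields the monodromy of the positive Hopf band of Proposition~\ref{T:MonodromieHopf}) are instructive checks.

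The only real obstacle is the compatibility step of the second paragraph: isotoping the two fibrations so that their product structures match along $\Pi$, equivalently verifying that a Murasugi sum of product sutured manifolds, glued along the polygonal disk in the required way, remains a product. Everything that follows --- identifying the fiber as $\Sigma_1\MurasommeB_{P_1\sim P_2}\Sigma_2$ and the monodromy as $h_1\circ h_2$ --- is then routine.
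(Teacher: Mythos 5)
Your argument is correct, but it follows a genuinely different route from the paper's. The paper's sketch is a direct ``glue the fibration maps'' construction: isotope $K_1, \Sigma_1$ into the upper half-space and $K_2, \Sigma_2$ into the lower half-space, zoom and reparametrize so that the fibration function $\theta_1$ of the exterior of $K_1$ is trivial in the lower half-space and runs through $[0,\pi]$ while $\theta_2$ is trivial in the upper half-space and runs through $[\pi,2\pi]$, then patch the two functions to a single $\theta$ with an explicit interpolation near the sides of the polygon $P$, and check by hand that $\theta$ has no singularities and that its $0$-level is $\Sigma_1 \MurasommeB_{P_1\sim P_2}\Sigma_2$; the monodromy composition is then read off directly from the order in which $\theta$ traverses the two half-spaces. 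You instead invoke the equivalent open-book reformulation via cut-open exteriors being products, and delegate the essential compatibility step to Gabai's theorem that a Murasugi sum of product sutured manifolds is a product. Both are standard and land in the same place; the paper's version is more pictorial and self-contained (matching its Figures~\ref{F:fibrationreparametree}--\ref{F:cylindre}), whereas your sutured-manifold framing isolates the one nontrivial input more sharply at the price of leaning on heavier external machinery. You correctly identify where the real work lies (normalizing the two fibrations near $\Pi$ so they match), which is exactly what the reparametrization step of the paper's proof accomplishes.
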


\begin{proof}[Proof (sketch, see~\cite{EM} for details)]
First apply an isotopy to the links~$K_1, K_2$ and to the surfaces~$\surf_1, \surf_2$ in order to place them in a good position, namely place $K_1$ and~$\surf_1$ in the upper half space, and $K_2$ and~$\surf_2$ in the lower half space (Figure~\ref{F:sommeMurasugi}). Then zoom on the neighbourhood of~$P_1$ ({\it resp.}~$P_2$) and rescale time so that the fibration, denoted~$\theta_1$ ({\it resp.~$\theta_2$}), of the complement of~$K_1$ ({\it resp.}~$K_2$) on the circle becomes trivial in the lower half space ({\it resp.\ }upper half space) and takes time~$[0,\pi]$ ({\it resp.}~$[\pi, 2\pi]$), see Figure~\ref{F:fibrationreparametree}. Finally consider the function~$\theta$ of the complement of~$K_1\MurasommeB_{P_1\sim P_2} K_2$ which is equal to~$\theta_1$ on the upper half space, to~$\theta_2$ on the lower half space (Figure~\ref{F:collagefibration}), and is defined according to Figure~\ref{F:cylindre} around the sides of~$P$. Check that~$\theta$ has no singularity and that the $0$-level is~$\surf_1\MurasommeB_{P_1\sim P_2}\surf_2$. Then $\theta$ induces fibration over the circle of the complement of~$K_1\MurasommeB_{P_1\sim P_2} K_2$ with fiber~$\surf_1\MurasommeB_{P_1\sim P_2}\surf_2$. As for the monodromy, a curve on~$\surf_1\MurasommeB_{P_1\sim P_2}\surf_2$ is first transformed in the lower half space according to~$h_2$, and then in the upper half space according to~$h_1$, so that the monodromy is the composition.
\end{proof}

\begin{figure}[htbp]
	\begin{center}
	\begin{picture}(330,170)(0,0)
	\put(0,0){\includegraphics[width=0.8\textwidth]{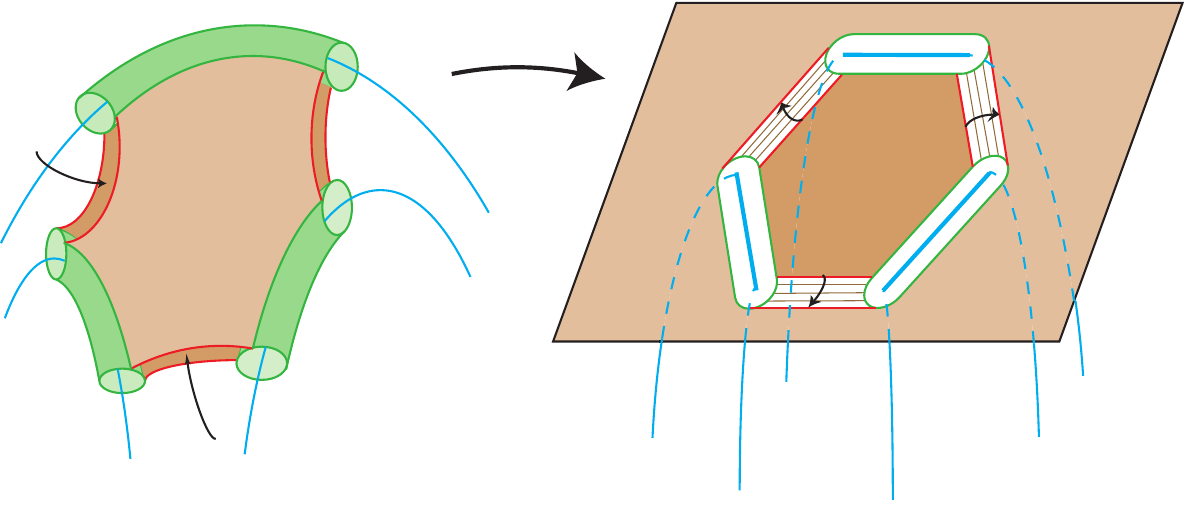}}
	\put(5,107){$t$} 
	\put(65,10){$t$}
	\put(50,90){$t+\varepsilon$}
	\put(260,97){$0$} 
	\put(218,133){$\pi$}
	\end{picture}
	\end{center}
	\caption{\small Deformation of the fibration of~$K_2$ so that it becomes trivial in the upper half space. It is obtained by zooming on a small neighbourhood of the polygon~$P$}
	\label{F:fibrationreparametree}
\end{figure}

\begin{figure}[htbp]
	\begin{center}
	\includegraphics[width=0.7\textwidth]{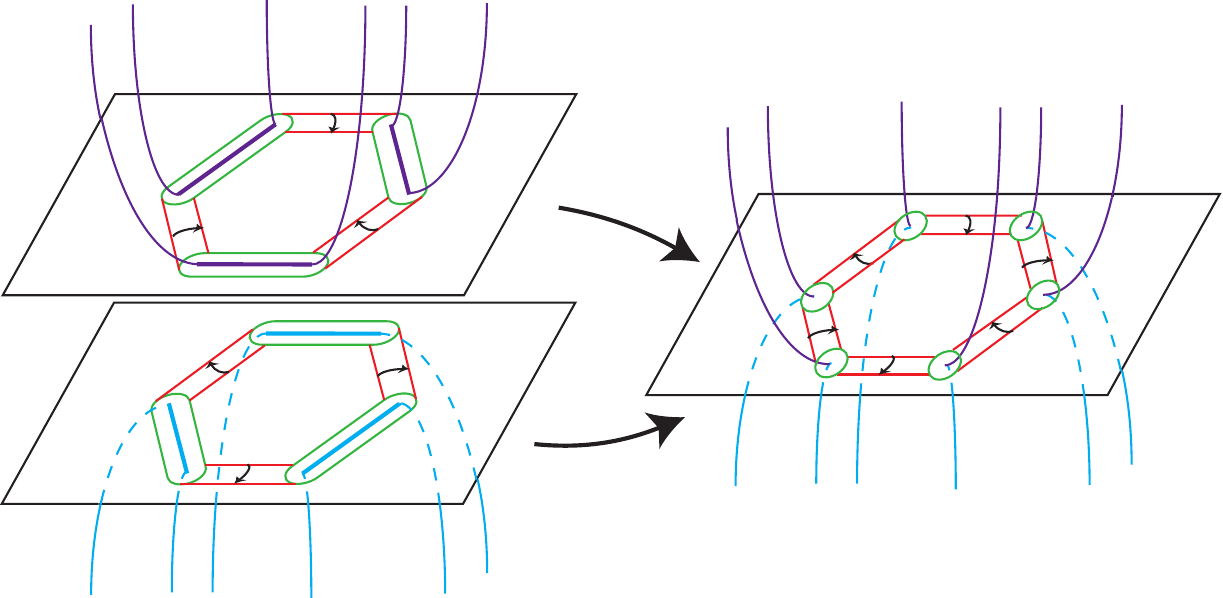}
	\end{center}
	\caption{\small Global picture for the gluing of two fibrations in good positions in order to obtain a fibration for the Murasugi sum.	
	}
	\label{F:collagefibration}
\end{figure}

\begin{figure}[htb]
	\begin{center}
	\includegraphics[scale=0.6]{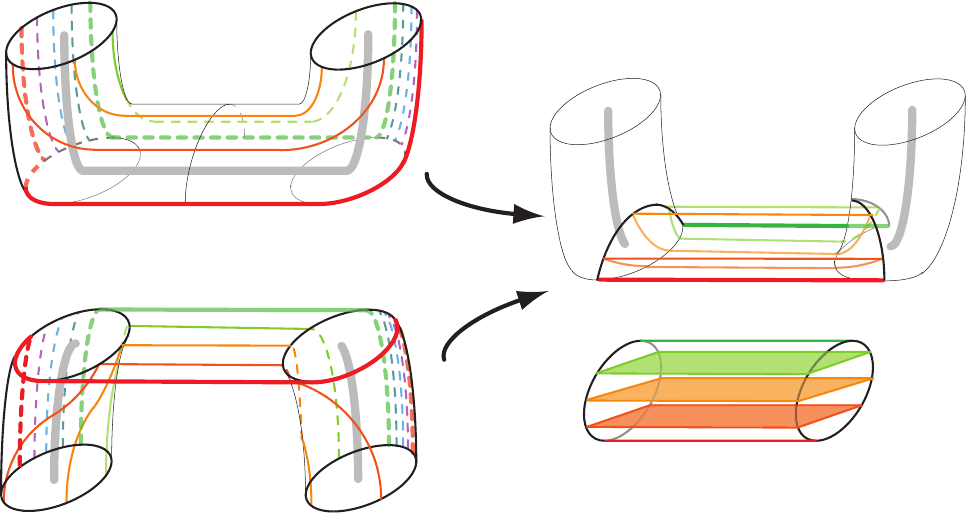}
	\end{center}
	\caption{\small How to define the fibration~$\theta$ around the sides of the polygon~$P$. On the left, the levels of the fibrations~$\theta_1$ and~$\theta_2$ in their respective half spaces. On the top right, the part on which~$\theta$ get a new definition. It is a union of disjoint cylinders with prescribed boundary values. On the bottom right, a foliation of a cylinder obeying this constraint.}
	\label{F:cylindre}
\end{figure}

\subsection{Iterated Murasugi sum}
\label{S:MurasugiSum}

We can now glue several fibered links together. In order to obtain a decomposition for the monodromy, we have to keep track on the order of the gluing operations, and on the top/bottom positions of the surfaces. A first example is displayed on Figure~\ref{F:TrefleFibre}, showing that a Murasugi sum of two Hopf bands yields a Seifert surface and a fibration for the trefoil knot. We can then iterate, and see that the closure of the braid~$\sigma_1^n$ is the Murasugi sum of~$n-1$ Hopf bands, each of them associated to two consecutive crossings. The monodromy of the resulting link is the product of~$n-1$ Dehn twists along the cores of the bands, performed starting from the bottom to the top of the braid. Then, by gluing two braids side by side as displayed on Figure~\ref{F:CollageLateral}, one obtains more complicated knots.

\begin{figure}[htbp]
	\begin{center}
	\includegraphics[width=0.55\textwidth]{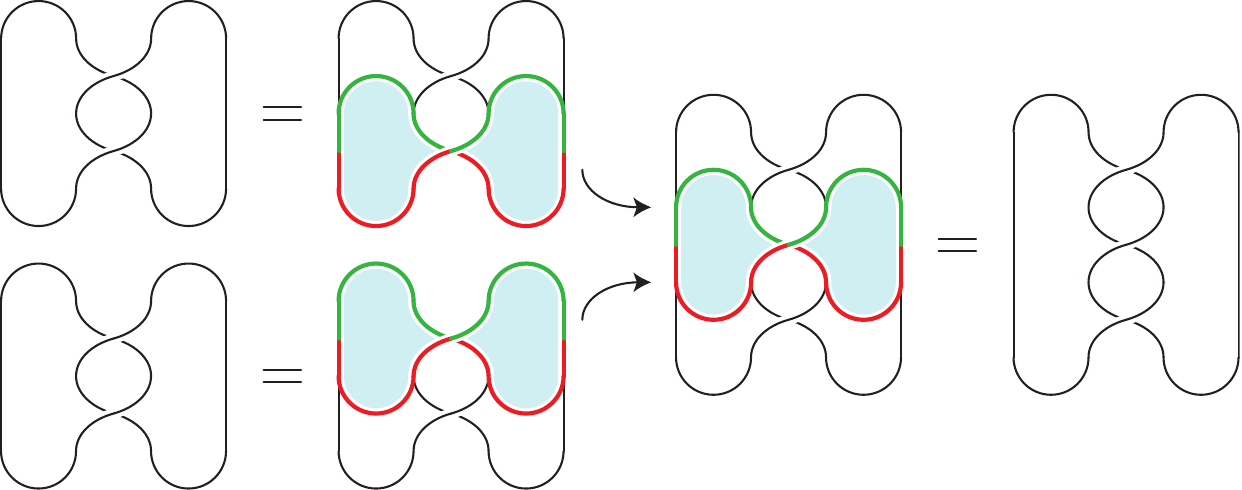}
	\end{center}
	\caption{\small How to glue two Hopf bands and obtain a fibration for the closure of the braid~$\sigma_1^n$.}
	\label{F:TrefleFibre}
\end{figure}

\begin{figure}[htbp]
	\begin{center}
	\includegraphics[width=0.45\textwidth]{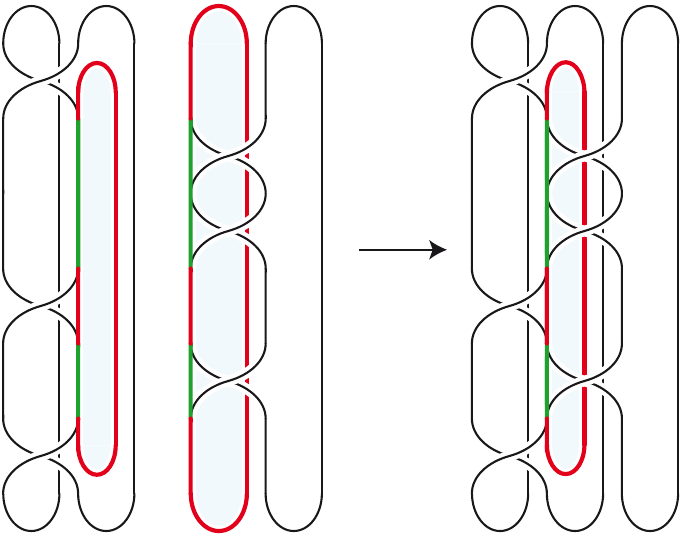}
	\end{center}
	\caption{\small The Murasugi sum of the closures of two positive braids.}
	\label{F:CollageLateral}
\end{figure}

\begin{definition}
\label{D:SommeMurasugiIteree}
An annulus embedded in~$\Sph^3$ whose boundary is a positive Hopf link is called a~\emph{Hopf band}.

A surface~$\surf$ with boundary is an \emph{iterated Murasugi sum} if there exists Hopf bands~$\Hopf_1, \dots$, $\Hopf_n$, an increasing sequence of surfaces with boundary~$\Hopf_1=\surf_1 \subset \surf_2 \subset \cdots \subset \surf_n=\surf$, and two sequences of polygons~$\carre_1 \subset \surf_1, \ldots, \carre_{n-1} \subset \surf_{n-1}$ and $\carreb_2 \subset \Hopf_2, \ldots, \carreb_{n} \subset \Hopf_{n}$ such that, for every~$i$ between~$1$ and~$n-1$, the surface~$\surf_{i+1}$ is the Murasugi sum~$\Hopf_{i+1}\MurasommeB_{\carreb_{i+1}\sim\carre_{i}}{\surf_i}$.
The sequence~$\surf_1 \subset \surf_2 \subset \cdots \subset \surf_n=\surf$ is called a \emph{Murasugi realisation} of~$\surf$.
\end{definition}

All surfaces with boundary in~$\Sph^3$ are not iterated Murasugi sums. Indeed, the boundary of such a sum is a fibered link. This is therefore a very pecular situation.

\begin{figure}[htbp]
	\begin{center}
	\begin{picture}(300,350)(0,0)
	\put(0,0){\includegraphics[width=0.7\textwidth]{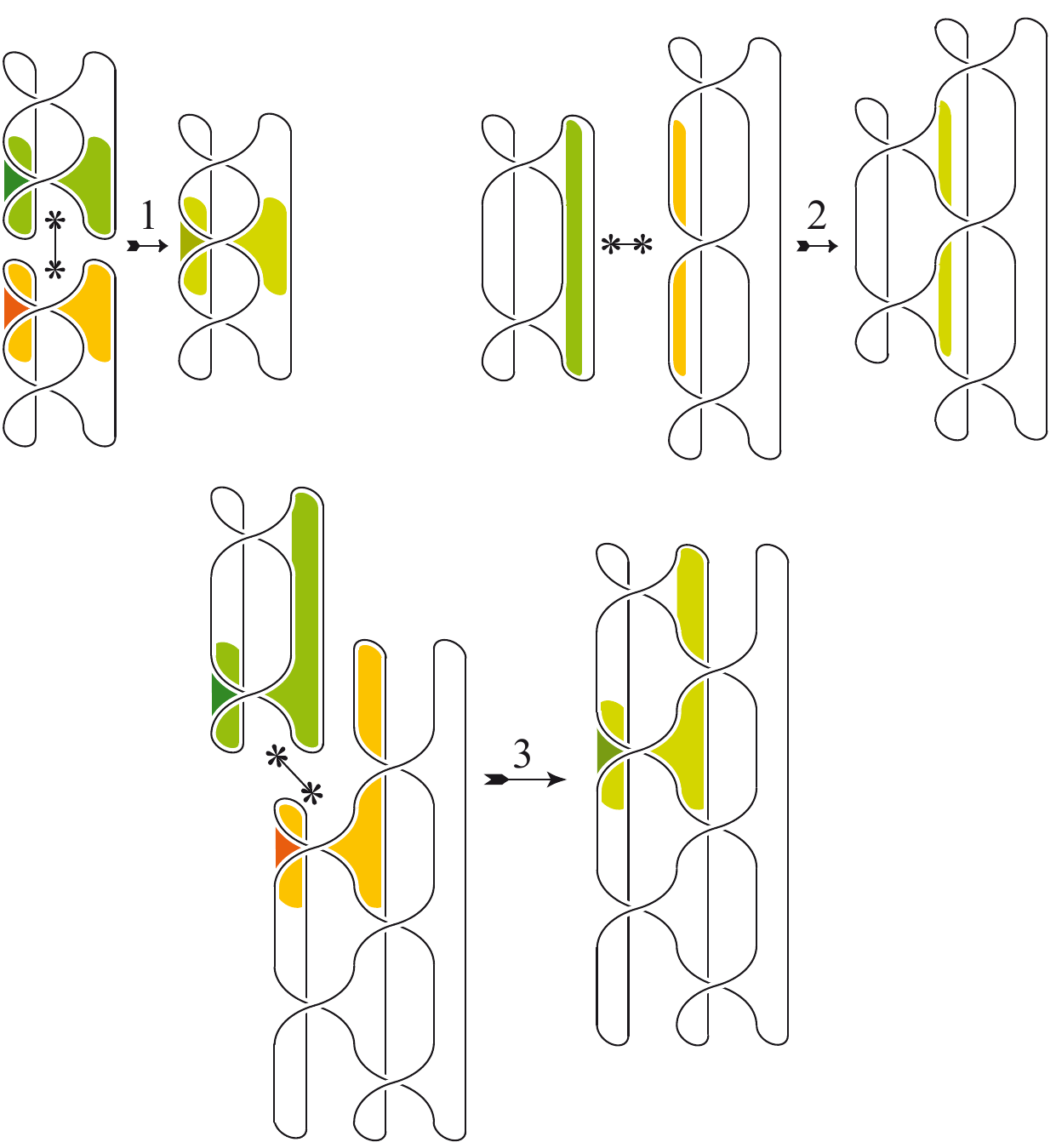}}
	\put(-15,235){$\Hopf_1$}
	\put(-15,300){$\Hopf_2$}
	\put(155,220){$\Hopf_3$}
	\put(47,160){$\Hopf_4$}
	\end{picture}
	\end{center}
	\caption{\small A realisation of the standard Seifert surface for the torus knot~$T(4,3)$. At each step, one takes the result of the previous step, and one glues on it a Hopf band along the colored polygon. The band~$\Hopf_1$ comes first, then~$\Hopf_2$... so that the Murasugi order (Definition~\ref{D:OrdreMurasugi}) associated to this realisation is $\Hopf_1\MuraOrdre \Hopf_2\MuraOrdre \Hopf_3\MuraOrdre \Hopf_4$.}
	\label{F:MurasugiIteree}
\end{figure}

Let $\surf$ be a surface admitting a Murasugi realisation~$\surf_1 \subset \surf_2 \subset \cdots \subset \surf_n=\surf$ along polygons $\Omega_1, \dots, \Omega_n$. If two consecutive polygons~$\Omega_i$ and $\Omega_{i+1}$ are disjoint in~$\Sigma_{i+1}$, then we can first glue~$\Hopf_{i+2}$ along~$\Omega_{i+1}$, and then~$\Hopf_{i+1}$ along~$\Omega_i$, and obtain the same surface~$\Sigma_{i+2}$ after these two steps. This means that we can change the order in which the bands~$\Hopf_{i+1}$ and $\Hopf_{i+2}$ are glued without changing the resulting surface.

Thus, for a fixed surface~$\Sigma$, there exists several possible orders for gluing the bands and realise~$\Sigma$. Nevertheless, some bands need to be glued before some others. For example if the gluing polygon~$\Omega_j$ intersects the band~$\Hopf_i$, then the band~$\Hopf_{j+1}$ has to be glued after~$\Hopf_i$.

\begin{definition}
\label{D:OrdreMurasugi}
Let~$\surf$ be an iterated Murasugi sum of~$n$ bands, denoted~$\Hopf_1, \cdots, \Hopf_n$. We say that the band~$\Hopf_i$ \emph{precedes} the band~$\Hopf_j$ in the \emph{Murasugi order associated to~$\surf$} if, for all possible realisations of~$\surf$, the band~$\Hopf_i$ is glued before~$\Hopf_j$. We then write~$\Hopf_i \MuraOrdre \Hopf_j$. 
\end{definition}

For every surface~$\Sigma$, the Murasugi order is a partial order on the set of Hopf bands whose union is~$\Sigma$.

\begin{proposition}
\label{T:DecompositionMonodromie}
Let~$K$ be an oriented link and~$\surf_K$ be a Seifert surface for~$K$ which is a Murasugi sum of Hopf bands~$\Hopf_1, \dots, \Hopf_n$. Let~$\gamma_{1}, \dots, \gamma_{n}$ be curves representing the cores of the bands~$\Hopf_1, \dots, \Hopf_n$. 
\begin{itemize}
\item[($i$)] The link~$K$ is fibered with fiber~$\surf_K$.
\item[($ii$)] Let~$\pi$ be a permutation of~$\{1, \cdots, n\}$ preserving Murasugi order, {\it i.e.}, such that $\Hopf_i \MuraOrdre \Hopf_j$ implies~$\pi(i) < \pi(j)$. Then the geometric monodromy of~$K$ is the composition of the positive Dehn twists~$\DTgeom_{\gamma_{\pi(n)}} \circ \cdots \circ \DTgeom_{\gamma_{\pi(1)}}$.
\end{itemize}
\end{proposition}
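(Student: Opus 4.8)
The plan is to argue by induction on the number~$n$ of Hopf bands, taking Theorem~\ref{T:sommefibre} as the engine that simultaneously propagates fiberedness and composes monodromies at each gluing step, and using the exchange moves recalled before Definition~\ref{D:OrdreMurasugi} to pass between different Murasugi realisations. First I would record the combinatorial fact that underlies part~$(ii)$: every permutation of~$\{1,\dots,n\}$ compatible with the Murasugi order~$\MuraOrdre$ is the order in which the bands are glued in \emph{some} Murasugi realisation of~$\surf_K$. Indeed, two consecutively glued bands whose gluing polygons are disjoint can be exchanged without affecting~$\surf_K$, and, since any two linear extensions of a finite poset are joined by a chain of transpositions of adjacent incomparable elements, a finite sequence of such exchanges takes any fixed realisation to one realising the prescribed gluing order. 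Hence it suffices to fix one realisation at a time and prove that the monodromy is the product of the~$\DTgeom_{\gamma_i}$ taken in its gluing order; after relabelling we may assume the bands are glued as~$\Hopf_1,\dots,\Hopf_n$, so that $\Hopf_n$ is~$\MuraOrdre$-maximal and $\surf_K=\Hopf_n\MurasommeB_{\carreb_n\sim\carre_{n-1}}\surf_{n-1}$, with $\surf_{n-1}$ carrying the induced Murasugi realisation $\Hopf_1\subset\surf_2\subset\cdots\subset\surf_{n-1}$ by the first~$n-1$ bands.

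The base case~$n=1$ is Proposition~\ref{T:MonodromieHopf}: $\surf_K$ is a Hopf band, so~$K=\partial\surf_K$ is fibered with fiber~$\surf_K$ and monodromy the positive Dehn twist~$\DTgeom_{\gamma_1}$ along its core. For the inductive step, the induction hypothesis gives that~$\partial\surf_{n-1}$ is fibered with fiber~$\surf_{n-1}$ and monodromy~$\DTgeom_{\gamma_{n-1}}\circ\cdots\circ\DTgeom_{\gamma_1}$, while Proposition~\ref{T:MonodromieHopf} gives that~$\partial\Hopf_n$ is fibered with fiber the annulus~$\Hopf_n$ and monodromy~$\DTgeom_{\gamma_n}$. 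I would then check that the gluing polygon~$\carreb_n=\carre_{n-1}$ is a~$2m$-gon whose edges alternate between~$\partial\Hopf_n$ and~$\partial\surf_{n-1}$, and that~$\Hopf_n$ and~$\surf_{n-1}$ can be isotoped in~$\Sph^3$ to opposite sides of a sphere meeting them exactly along this polygon, with matching induced orientations --- i.e.\ that we are in the situation of Theorem~\ref{T:sommefibre}. That theorem then yields at once that~$K=\partial\surf_K$ is fibered with fiber~$\surf_K$, proving~$(i)$, and that its monodromy is the composition of~$\DTgeom_{\gamma_n}$ with~$\DTgeom_{\gamma_{n-1}}\circ\cdots\circ\DTgeom_{\gamma_1}$, each factor extended by the identity to~$\surf_K$.

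To finish~$(ii)$ one needs the (routine) observation that extending a positive Dehn twist along a curve~$\gamma_i\subset\surf_{n-1}$ by the identity on~$\surf_K\smallsetminus\surf_{n-1}$ is isotopic to the positive Dehn twist along~$\gamma_i$ regarded in~$\surf_K$: a twist is supported in an arbitrarily thin annular neighbourhood of its curve, and such a neighbourhood can be chosen inside~$\surf_{n-1}$, hence also inside~$\surf_K$. Applying this to every factor, the monodromy of~$K$ is~$\DTgeom_{\gamma_n}\circ\DTgeom_{\gamma_{n-1}}\circ\cdots\circ\DTgeom_{\gamma_1}$ with all twists taken in~$\surf_K$, that is, the product of the~$\DTgeom_{\gamma_i}$ performed in the gluing order. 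By the first paragraph this gives the stated formula for every Murasugi-order-preserving permutation; the invariance of the formula under an exchange move is clear, since two consecutively glued bands with disjoint gluing polygons have cores supported in disjoint annuli, so the corresponding twists commute.

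I expect the main obstacle to be precisely the verification, at each step of the induction, that Definition~\ref{D:SommeMurasugiIteree} places us in the hypotheses of Theorem~\ref{T:sommefibre}: keeping track of which closed half-space of~$\Sph^3$ the freshly glued band and the previously built surface occupy, that their intersection is the gluing polygon and nothing more, and that the alternation of edges and the orientations match --- in short, the top/bottom bookkeeping along the successive gluings. The poset combinatorics of the first paragraph and the ``extension by the identity is again a Dehn twist'' remark are straightforward.
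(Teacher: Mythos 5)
Your proposal is correct and follows the same route the paper takes: both deduce parts $(i)$ and $(ii)$ by applying Theorem~\ref{T:sommefibre} iteratively along a Murasugi realisation, with Proposition~\ref{T:MonodromieHopf} as the base case. The only difference is one of detail: the paper dispatches the step "any Murasugi-order-preserving permutation is the gluing order of some realisation" with the words "by definition," whereas you spell it out via exchange moves and the standard fact that linear extensions of a finite poset are connected by adjacent transpositions of incomparable elements; you also make explicit the routine point that a twist supported in~$\surf_{n-1}$ extended by the identity is the twist on~$\surf_K$, which the paper leaves tacit.
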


\begin{proof}
By definition, the sequence~$\Hopf_{\pi(1)}, \dots, \Hopf_{\pi(n)}$ induces a Murasugi realisation of~$\surf_K$. Since the monodromy of each Hopf band~$\Hopf_{\pi(i)}$ is the Dehn twist~$\DTgeom_{\gamma_{\pi(i)}}$, Theorem~\ref{T:sommefibre} implies that the link~$K$ is fibered, and that its monodromy is the composition~$\DTgeom_{\gamma_{\pi(n)}} \circ \cdots \circ \DTgeom_{\gamma_{\pi(1)}}$.
\end{proof}


\subsection{Standard surface for Lorenz knots}\label{S:SurfaceStandard}
\label{S:StandardSurface}

\begin{definition}\label{D:SurfaceStandard}
(See Figure~\ref{F:SurfaceStandard}.)
Let $D$ be a Young diagram and $K_D$ the associated Lorenz knot. The spanning surface~$\surfd$ for~$K_D$ obtained by gluing a disk beyond each strand and a ribbon at each crossing is called the~\emph{standard Seifert surface}.
\end{definition}

\begin{figure}[htbp]
	\begin{center}
	\includegraphics[width=0.4\textwidth]{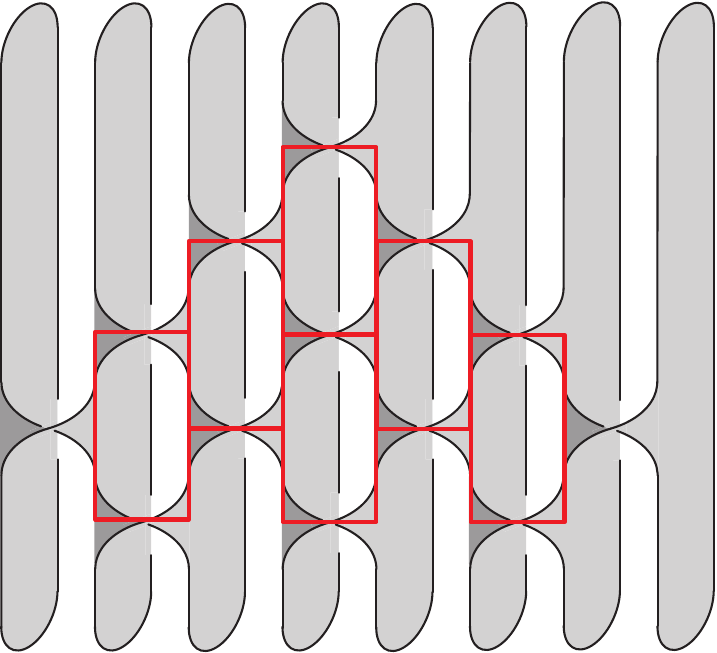}
	\end{center}
	\caption{\small The standard Seifert surface associated to the Young diagram~$[3,2,1]$, and the cores of the six Hopf bands that form a Murasugi realisation of the surface.}
	\label{F:SurfaceStandard}
\end{figure}

We now summarize the construction.

\begin{proposition}
\label{T:DecompositionMurasugi}
Let~$D$ be a Young diagram with~$n$ cells and~$K_D$ the associated Lorenz link.
\begin{itemize}
\item[($i$)] The standard Seifert surface~$\surfd$ is the iterated Murasugi sum of $n$ Hopf bands~$\Hopf_{i,j}$, each of them being associated with one of the $n$~cells $(i,j)$ de~$D$.
\item[($ii$)] The band~$\Hopf_{i_1,j_1}$ precedes~$\Hopf_{i_2,j_2}$ in the Murasugi order if and only if we have~$i_1\ge i_2$, $i_1+j_1 \ge i_2 + j_2,$ and $(i_1,j_1) \neq (i_2,j_2)$.
\item[($iii$)] For all cells~$(i,j)$ of~$D$, we choose a curve~$\gamma_{i,j}$ along the core of the band~$\Hopf_{i, j}$.
Then the family of classes $\{[\gamma_{i,j}]\}_{(i,j)\in D}$ forms a basis of~$H_1(\surfd; \Z)$ seen as a~$\Z$-module. 
The intersection number $\left\langle \gamma_{i_1,j_1} \, \big\vert \, \gamma_{i_2,j_2} \right\rangle$ is
\begin{equation*}
\begin{cases}
\label{Eq:NbIntersection}
+1 &\mbox{if $(i_2, j_2) = (i_1{+}1, j_1{+}1)$, $(i_1, j_1{-}2)$, or $(i_1{-}1, j_1{+}1)$}, \\
-1 &\mbox{if $(i_2, j_2) = (i_1{+}1, j_1{-}1)$, $(i_1, j_1{+}2)$, or $(i_1{-}1, j_1{-}1)$}, \\
0 &\mbox{otherwise.}
\end{cases}
\end{equation*}
\item[($iv$)] For every sequence $\Hopf_{i_1,j_1} \MuraOrdreLarge \cdots \MuraOrdreLarge \Hopf_{i_n,j_n}$ preserving the Murasugi order, the geometric monodromy of~$K$ is the product $\DTgeom_{\gamma_{i_1, j_1}} \circ \cdots \circ \DTgeom_{\gamma_{i_n, j_n}}$, and the homological monodromy is the product $\DT_{\gamma_{i_1, j_1}} \circ \cdots \circ \DT_{\gamma_{i_n, j_n}}$.
\end{itemize} 

\end{proposition}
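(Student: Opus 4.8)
The four assertions are of different natures; I would organise the proof around the explicit ``brick wall'' picture of $\surfd$ of Figure~\ref{F:SurfaceStandard}, establishing (i) geometrically, (ii) by a bookkeeping argument, and then deducing (iii) and (iv). \emph{Part~(i).} Starting from Definition~\ref{D:Lorenz}, the crossings of the Lorenz braid $b_D$ form a desingularised copy of the diagram $D$: to each cell $(i,j)$ there correspond four crossings sitting at its corners. In the standard Seifert surface $\surfd$ (one disk behind each strand, one ribbon at each crossing), the four ribbons at those crossings, together with the two disk portions they join, bound an embedded annulus carrying one positive full twist, i.e.\ a positive Hopf band, which I call $\Hopf_{i,j}$, with core $\gamma_{i,j}$. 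I would then read off from the diagram that $\surfd=\bigcup_{(i,j)\in D}\Hopf_{i,j}$ and that, enumerating the cells so that a cell is listed only after the cells lying weakly above it and weakly to its right, each band $\Hopf_{i,j}$ is attached to the union of the previously listed ones along a square two of whose edges lie on the boundary — that is, by a plumbing (the $n=2$ case of a Murasugi sum). This realises $\surfd$ as an iterated Murasugi sum of the $n$ bands $\Hopf_{i,j}$. As a by-product, an Euler characteristic count on $\surfd$ (connected, with one disk per strand and one ribbon per crossing) gives $\mathrm{rk}\,H_1(\surfd;\Z)=n$, a fact used in~(iii).

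\emph{Part~(ii).} By Definition~\ref{D:OrdreMurasugi}, $\Hopf_{i_1,j_1}\MuraOrdre\Hopf_{i_2,j_2}$ exactly when the band at $(i_1,j_1)$ is present in every realisation before $\Hopf_{i_2,j_2}$ can be glued; and, as recalled just before that definition, a band must be glued after $\Hopf_{i_1,j_1}$ precisely when its gluing square meets $\Hopf_{i_1,j_1}$. So the point is to determine which bands the gluing square of $\Hopf_{i_2,j_2}$ runs into. I would argue, from the brick wall, that the position of this square in the braid diagram forces exactly the bands $\Hopf_{i_1,j_1}$ with weakly larger column index and weakly larger anti-diagonal index, i.e.\ $i_1\ge i_2$ and $i_1+j_1\ge i_2+j_2$ with $(i_1,j_1)\ne(i_2,j_2)$, to precede it; enlarging a realisation one band at a time, the frontier of the partial surface against which the next gluing square must sit is exactly the union of the bands satisfying these inequalities. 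Conversely, for each pair violating them I would exhibit an explicit realisation gluing $\Hopf_{i_2,j_2}$ first. I expect this analysis — in particular the claim that precisely these two half-plane conditions, and no others, govern the order — to be the main obstacle in the whole proposition.

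\emph{Part~(iii).} Since each Hopf band deformation-retracts onto its core and the bands are glued by plumbings along squares interior to them, $\surfd$ deformation-retracts onto $\bigcup_{(i,j)}\gamma_{i,j}$ (equivalently, one can induct along a Murasugi realisation, each plumbing raising the rank of $H_1$ by one with the new core as the extra generator); hence the classes $[\gamma_{i,j}]$ generate $H_1(\surfd;\Z)$, and being $n=\mathrm{rk}\,H_1(\surfd;\Z)$ in number they form a $\Z$-basis (when $K_D$ is a knot this reads $n=2g$). The intersection numbers are then a local matter: if the bricks $(i_1,j_1)$ and $(i_2,j_2)$ share neither a ribbon nor a disk the cores are disjoint and $\langle\gamma_{i_1,j_1}\mid\gamma_{i_2,j_2}\rangle=0$; if they are adjacent in the brick wall — which, by inspection, happens exactly for the six cells $(i_1{\pm}1,j_1{\pm}1)$ and $(i_1,j_1{\pm}2)$ — the two cores can be isotoped to meet transversally in one point, and the sign is fixed by the chosen orientations; computing that sign at one representative plumbing of each type (the two diagonal types and the same-column type) gives the stated table, antisymmetry of the pairing serving as a consistency check. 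Finally, \emph{part~(iv)} is immediate: by~(i) and~(ii) any sequence $\Hopf_{i_1,j_1}\MuraOrdreLarge\cdots\MuraOrdreLarge\Hopf_{i_n,j_n}$ is a Murasugi realisation of $\surfd$, so Proposition~\ref{T:DecompositionMonodromie} yields that $K_D$ is fibered with fiber $\surfd$ and geometric monodromy $\DTgeom_{\gamma_{i_1,j_1}}\circ\cdots\circ\DTgeom_{\gamma_{i_n,j_n}}$; passing to homology and using that (for $\partial\surfd$ connected) a positive Dehn twist along $\gamma$ acts on $H_1(\surfd;\Z)$ as the transvection $x\mapsto x+\langle x\mid\gamma\rangle\gamma$ determined by the form of~(iii) (Definition~\ref{D:Hopf} and the remark following it), the homological monodromy is the product $\DT_{\gamma_{i_1,j_1}}\circ\cdots\circ\DT_{\gamma_{i_n,j_n}}$.
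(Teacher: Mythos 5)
Your overall plan matches the paper's proof closely: both establish~($i$) by reading the Hopf--band decomposition off the brick--wall picture of Figure~\ref{F:SurfaceStandard}/\ref{F:MurasugiIteree}, derive~($ii$) from the bands that the gluing polygon of each $\Hopf_{i,j}$ must touch, obtain~($iii$) by the Euler--characteristic count plus a local sign inspection, and get~($iv$) from Proposition~\ref{T:DecompositionMonodromie}. You are also right, and honest, that~($ii$) is the only step with real content; the paper handles it exactly as you propose, by observing that the gluing polygon for $\Hopf_{i,j}$ lies in the union of $\Hopf_{i+1,j-1}$, $\Hopf_{i+1,j+1}$ and $\Hopf_{i,j+2}$, then taking the transitive closure.

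There is, however, one concrete mistake in your Part~($i$), and it would have bitten you if you had carried the computation further. You assign to each cell $(i,j)$ the four crossings at its corners and claim that ``the four ribbons at those crossings, together with the two disk portions they join, bound an embedded annulus carrying one positive full twist.'' Four positive braid crossings contribute four positive half--twists, i.e.\ \emph{two} full twists; such an annulus is not a Hopf band (its boundary is a $(2,4)$--torus link). The correct identification, forced by positivity and by the intersection table in~($iii$), uses only the \emph{two} crossings in the same column, at the north and south corners $(i,j-1)$ and $(i,j+1)$ of the cell: the Hopf band $\Hopf_{i,j}$ is the annulus made of those two ribbons and the two disk strips joining them, giving a single full twist. (As a sanity check, with your four--ribbon curves, $\gamma_{i,j}$ and $\gamma_{i+1,j+1}$ would pass through the two shared corners $(i+1,j)$ and $(i,j+1)$ and so would meet in an even number of points, contradicting $\langle\gamma_{i,j}\mid\gamma_{i+1,j+1}\rangle=\pm 1$.) Once this is fixed, your ($ii$)--($iv$) go through as you sketch them, though I would not insist that every gluing is a plumbing: the paper only says the polygon lies in the union of up to three previously glued bands, which is what the partial order actually needs, and the $2n$--gon may well be a hexagon when all three immediate predecessors are present.
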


\begin{proof}
For $(i)$ and $(ii)$, Figure~\ref{F:MurasugiIteree} shows how to glue~$n$ Hopf bands and obtain the surface~$\surfd$. We see that the band~$\Hopf_{i, j}$ is glued along a polygon included in the union of the three bands~$\Hopf_{i+1, j-1}$, $\Hopf_{i+1, j+1}$ and $\Hopf_{i, j+2}$. Therefore these bands need to be glued before adding~$\Hopf_{i, j}$. We obtain the result by induction.

$(iii)$ Given a cell~$(i, j)$ of~$D$, the homology of the band~$\Hopf_{i, j}$ is generated by the class~$[\gamma_{i, j}]$. Since the surface~$\surfd$ is the union of these Hopf bands, its homology is generated by~$\{[\gamma_{i,j}]\}_{(i,j)\in D}$. A computation of Euler characteristic of~$\surfd$ shows that these class form indeed a basis. We see on Figure~\ref{F:SurfaceStandard} that two curves~$\gamma_{i_1, j_1}, \gamma_{i_2, j_2}$ intersect only if the associated cells $(i_1, j_1)$ and $(i_2, j_2)$ of~$D$ are neighbours. The rule for signs is depicted on Figure~\ref{F:Intersection}.

$(iv)$ follows from~$(i)$ et Proposition~\ref{T:DecompositionMurasugi}.
\end{proof}

\begin{figure}[htbp]
	\begin{center}
	\begin{picture}(210,150)(0,0)
	\put(0,0){\includegraphics[width=0.5\textwidth]{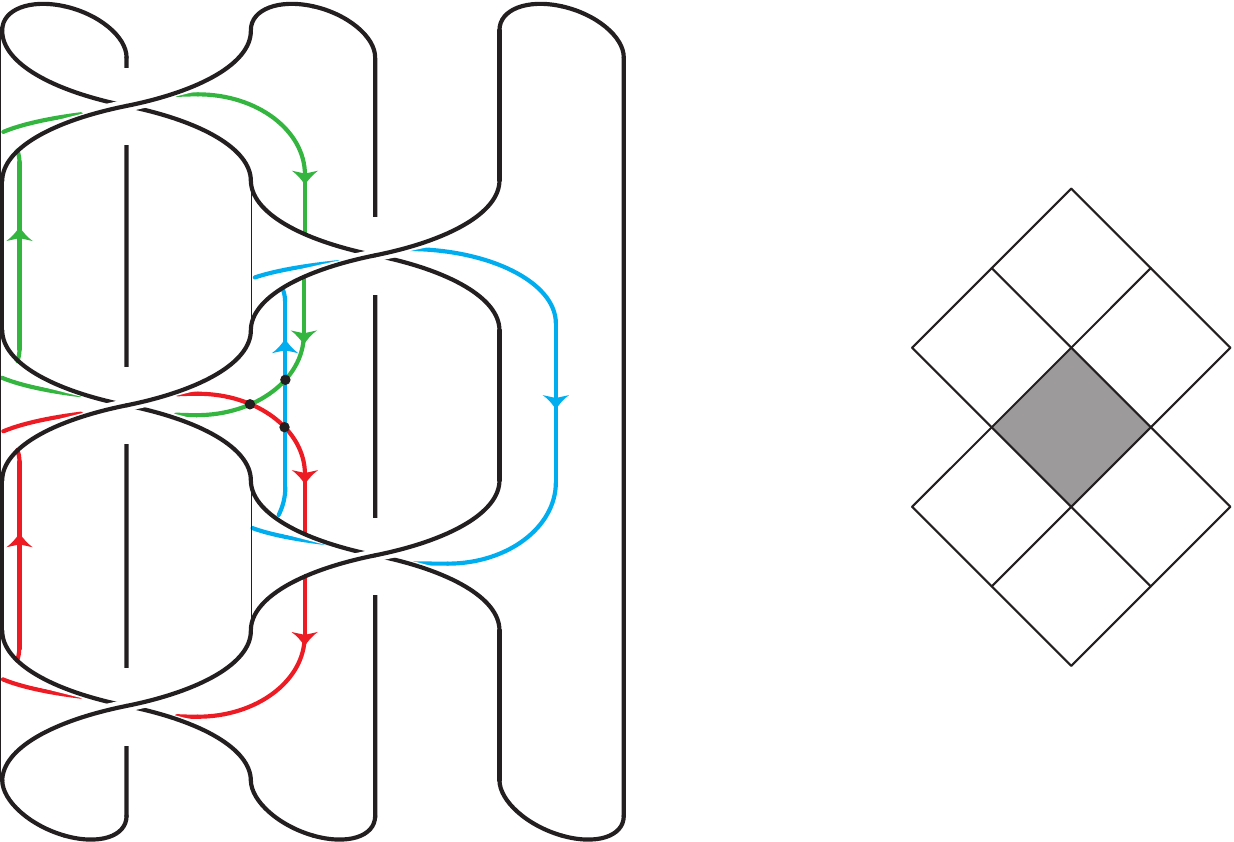}}
	\put(202,58){$+1$}
	\put(202,86){$-1$}
	\put(187,100){$+1$}
	\put(173,86){$-1$}
	\put(173,58){$+1$}
	\put(187,44){$-1$}
	\end{picture}
	\end{center}
	\caption{\small On the left, the curves $\gamma_{i,j}$, $\gamma_{i,j+2}$ and $\gamma_{i+1,j+1}$ on the surface~$\surfd$. Intersection points are dotted. On the right, values of the intersection between~$\gamma_{i, j}$ and curves associated with adjacent cells.}
	\label{F:Intersection}
\end{figure}

We now deduce the combinatorial form of the monodromy that we will rely on.

\begin{proposition}[see Figure~\ref{F:OrdreCompositionSimple}]
\label{T:FormuleMonodromie}
	Let~$D$ be a Young diagram and~$K$ the associated Lorenz knot. Then the homological monodromy associated to the standard Seifert surface is the composition 
	\[\prod_{\col=\col_r}^{\col_l} \prod_{j=\bbot_\col}^{\ttop_\col} \DT_{\gamma_{\col,j}}.\]
\end{proposition}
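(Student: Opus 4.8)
The plan is to apply Proposition~\ref{T:DecompositionMurasugi}$(iv)$, which says that any linear extension of the Murasugi order $\MuraOrdreLarge$ yields the homological monodromy as the corresponding product of transvections $\DT_{\gamma_{i,j}}$. So the whole task reduces to checking that the particular ordering of cells implicit in the double product $\prod_{\col=\col_r}^{\col_l} \prod_{j=\bbot_\col}^{\ttop_\col}$ --- namely, sweep the columns from right to left, and within each column sweep the cells from bottom to top --- is a valid linear extension of $\MuraOrdreLarge$. Once this is verified, the formula follows immediately. (One must also fix the convention that in a composition $\prod$ the factors are applied in the order dictated by Proposition~\ref{T:DecompositionMurasugi}$(iv)$, i.e.\ $\DT_{\gamma_{i_1,j_1}}\circ\cdots\circ\DT_{\gamma_{i_n,j_n}}$ with $(i_1,j_1)$ the $\MuraOrdreLarge$-minimal cell; I would state this convention explicitly at the start of the proof so that the product symbol is unambiguous.)

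First I would recall the explicit description of the Murasugi order from Proposition~\ref{T:DecompositionMurasugi}$(ii)$: $\Hopf_{i_1,j_1}$ precedes $\Hopf_{i_2,j_2}$ iff $i_1\ge i_2$, $i_1+j_1\ge i_2+j_2$, and $(i_1,j_1)\neq(i_2,j_2)$. Geometrically, writing $u=i$ (first coordinate, decreasing to the left) and $v=i+j$ (the ``$SE$-diagonal'' index), a cell precedes another exactly when its $(u,v)$ pair dominates the other's componentwise and the cells are distinct. So I need to order the cells linearly so that if cell $\alpha$ dominates cell $\beta$ in both $u$ and $v$, then $\alpha$ comes first.

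Next I would check that the proposed double product does exactly this. Traversing columns from $\col_r$ (rightmost, largest $i$) down to $\col_l$ (leftmost, smallest $i$) means the $u$-coordinate $i$ is non-increasing as we progress through the product; hence if $\alpha$ is strictly to the right of $\beta$ (i.e.\ $i_\alpha > i_\beta$), $\alpha$ is treated first, which is consistent with any precedence that might hold. The only remaining case is two cells in the same column $\col$ (equal $i$), where precedence $\Hopf_{\col,j_1}\MuraOrdre\Hopf_{\col,j_2}$ holds iff $j_1 > j_2$, i.e.\ the higher cell in a column precedes the lower one. But Proposition~\ref{T:FormuleMonodromie} writes the inner product as $\prod_{j=\bbot_\col}^{\ttop_\col}$, i.e.\ from the bottom cell $(\col,\bbot_\col)$ up to the top cell $(\col,\ttop_\col)$ --- and with the composition convention ``leftmost-written factor is applied first'', the first-applied factor in the inner product is $\DT_{\gamma_{\col,\ttop_\col}}$, the topmost cell. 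Thus within a column the higher cells are applied before the lower ones, matching $\MuraOrdre$. I would spell this compatibility check out carefully, since it is really a verification that ``right-to-left across columns, top-to-bottom within a column'' refines the partial order; the remaining (non-comparable) pairs can be ordered arbitrarily, so no further constraint is violated.

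The main (and essentially only) obstacle is bookkeeping: getting the direction conventions straight so that the order in which one \emph{writes} the factors of $\prod$ matches the order in which Proposition~\ref{T:DecompositionMurasugi}$(iv)$ \emph{applies} them, and checking that the outer product really runs from $\col_r$ to $\col_l$ in the sense that makes higher-$i$ cells precede lower-$i$ ones. There is a mild subtlety in that the outer product, as written with limits $\col=\col_r$ to $\col_l$, must be read as decreasing $\col$; I would note this and confirm it is consistent with $\MuraOrdre$. Once the conventions are pinned down, the statement is a direct consequence of Proposition~\ref{T:DecompositionMurasugi} and no computation is needed. It may also be worth remarking (referring to Figure~\ref{F:OrdreCompositionSimple}) that this particular linear extension is the natural one obtained by reading the braid word of $b_D$, which is why it is the convenient form to work with in the next section.
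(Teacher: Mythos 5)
Your approach is the paper's: invoke Proposition~\ref{T:DecompositionMurasugi}$(iv)$ and check that the displayed product lists the cells in an order refining the Murasugi partial order of Proposition~\ref{T:DecompositionMurasugi}$(ii)$. That is exactly what the paper does (its one-line proof writes out the linear extension $\Hopf_{c_r,c_r} \MuraOrdreLarge \dots \MuraOrdreLarge \Hopf_{c,b_c} \MuraOrdreLarge \Hopf_{c, b_c-2} \MuraOrdreLarge \dots \MuraOrdreLarge \Hopf_{c,t_c} \MuraOrdreLarge \Hopf_{c-1,b_{c-1}} \MuraOrdreLarge \ldots$ and appeals to $(iv)$), so no difference of method.

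However, your verification of the within-column ordering contains a genuine slip in reading the coordinates: in Definition~1.2 of the paper, going down a column \emph{increases} the second coordinate (the top cell is $(0,0)$ and moving SW or SE adds $(\mp 1,1)$), so $\ttop_\col=\lvert\col\rvert$ is the smaller value and $\bbot_\col$ is the larger. Consequently the correct reading of ``$\Hopf_{\col,j_1}\MuraOrdre\Hopf_{\col,j_2}$ iff $j_1>j_2$'' is that the \emph{lower} cell precedes the higher one, not the other way around as you assert; and in the inner product $\prod_{j=\bbot_\col}^{\ttop_\col}$ the factor written first is $\DT_{\gamma_{\col,\bbot_\col}}$ (the bottom cell, as the figure caption says: ``from bottom to top''), not $\DT_{\gamma_{\col,\ttop_\col}}$. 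You have made two sign/direction errors that cancel each other, so your conclusion that the product respects $\MuraOrdre$ is correct, but the intermediate statement ``the higher cell in a column precedes the lower one'' is false and would be flagged by a referee. Everything else --- the appeal to $(iv)$, the column-order check, and the caveat about which end of a composition is applied first --- is fine, though I would fix the $j$-orientation so the bookkeeping you rightly identify as the crux is actually carried out correctly.
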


\begin{proof}
By Proposition~\ref{T:DecompositionMurasugi}$(ii)$, we have $\Hopf_{c_r,c_r} \MuraOrdreLarge \dots \MuraOrdreLarge \Hopf_{c,b_c} \MuraOrdreLarge \Hopf_{c, b_c-2}, \MuraOrdreLarge \dots \MuraOrdreLarge \Hopf_{c,t_c} \MuraOrdreLarge \Hopf_{c-1,b_{c-1}} \ldots$ The result then follows from Proposition~\ref{T:DecompositionMurasugi}$(iv)$.
\end{proof}

\begin{figure}[htbp]
	\begin{center}
	\includegraphics[width=0.4\textwidth]{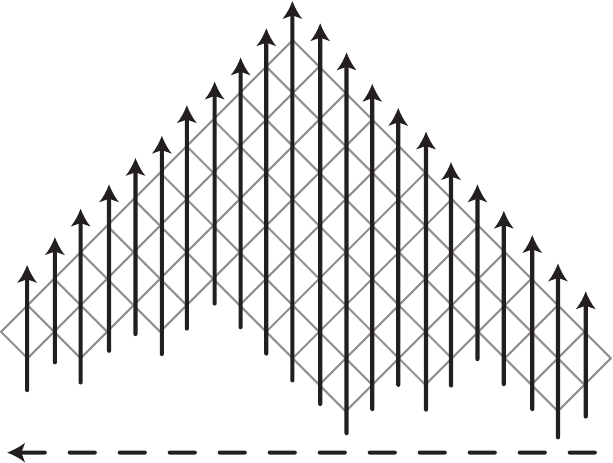}
	\end{center}
	\caption{\small A Murasugi order for the monodromy of a Lorenz link: we perform Dehn twists from right to left, and, in each column, from bottom to~top.}
	\label{F:OrdreCompositionSimple}
\end{figure}


\section{Combinatorics of the monodromy}
\label{S:Combinatorics}

Starting from a Lorenz knot~$K$, we obtained in Section~\ref{S:Preliminaries} a presentation for the monodromy~$h$ of~$K$ as a product of transvections. In this section, we analyze the image of particular cycles of the fiber of~$K$ under~$h$. Our goal is to find a basis of~$H_1(\Sigma; \Z)$ that splits into two families~$B_1, B_2$ so that the image under~$h$ of a cycle of~$B_1$ is another single cycle of~$B_1$ or $B_2$, and that the iterated images under~$h$ of a cycle of~$B_2$ stay in~$B_1$ for a number of steps with a uniform lower bound. We shall see in Section~\ref{S:Radius} that the existence of such a basis implies that the $\ell^1 $-norm of a cycle cannot grow too fast when the monodromy is iterated. 

We proceed in two steps. In Section~\ref{S:FirstPass}, we develop a first, relatively simple combinatorial analysis based on the standard Murasugi decomposition, and explain why it fails to provide a convenient basis. In Section~\ref{S:MixedSurface}, we introduce a new, more suitable Murasugi decomposition. Finally, in Section~\ref{S:SecondPass}, we complete the analysis for the latter decomposition and exhibit the expected basis.

\subsection{A first attempt}
\label{S:FirstPass}

From now on, we fix a Young diagram~$D$. We call~$K_D$ the associated Lorenz link, $\surfd$ the associated standard spanning surface for~$K_D$, viewed as an iterated Murasugi sum of~$n$ positive Hopf bands~$\Hopf_{i, j}$. For every cell $(i, j)$ in~$D$, we fix a curve~$\gamma_{i, j}$ that is the core of the Hopf band~$\Hopf_{i, j}$ embedded into~$\surfd$. By Proposition~\ref{T:DecompositionMurasugi}$(iii)$, the classes~$\{[\gamma_{i,j}]\}_{(i,j)\in D}$ form a basis of the group~$H_1(\surfd; \Z)$. We write $h_D$ for the homological monodromy associated with~$\surfd$, {\it i.e.}, the endomorphism of~$H_1(\surfd; \Z)$ induced by the geometrical monodromy. In order to improve readability, we write~$\cy{i, j}$ for the cycle~$[\gamma_{i, j}]$ and~$\DT\cy{i, j}$ for the transvection of~$H_1(\surfd; \Z)$ induced by a positive Dehn twist along the curve~$\gamma_{i, j}$.
We adopt the convention that, if $(i, j)$ are the coordinates of no cell of the diagram~$D$, then the curve~$\cy{i, j}$ is empty, and the twist~$\DT\cy{i, j}$ is the identity map on~$\surfd$.

\begin{lemma}
\label{T:DehnTwistLorenz}
Let~$\gamma$ be a curve on~$\surfd$. Suppose that its homology class admits the decomposition $[\gamma] = \displaystyle\sum_{k,l} x_{k,l} \cy{k,l}$. Then for all cells~$(i, j)$ of~$D$, we have
\begin{eqnarray*}
\DT{\cy{i,j}}\left([\gamma]\right) &=& [\gamma] + \left\langle \gamma \, \big\vert \, \cy{i,j} \right\rangle \cy{i,j} \\
& = & [\gamma] + \left(- x_{i{+}1,j{+}1} + x_{i,j{+}2} - x_{i{-}1,j{+}1} + x_{i{-}1,j{-}1} - x_{i,j{-}2} + x_{i{+}1,j{-}1}\right) \cy{i,j}. 
\end{eqnarray*}
\end{lemma}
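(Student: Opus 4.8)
The statement is really just the standard transvection formula combined with the explicit intersection pairing recorded in Proposition~\ref{T:DecompositionMurasugi}$(iii)$. The plan is to first recall why a positive Dehn twist along a curve~$\gamma_{i,j}$ acts on homology by the transvection $x \mapsto x + \langle x \mid \cy{i,j}\rangle\, \cy{i,j}$, and then to expand the pairing $\langle [\gamma] \mid \cy{i,j}\rangle$ using bilinearity and the six-case table from Proposition~\ref{T:DecompositionMurasugi}$(iii)$.

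First I would establish the first equality. By Definition~\ref{D:Hopf}$(iii)$, the homological twist $\DT{\cy{i,j}}$ is the map induced on $H_1(\surfd; \Z)$ by the geometric twist $\DT_{\gamma_{i,j}}$, which is the identity outside an annular neighbourhood $A$ of $\gamma_{i,j}$ and a positive twist inside $A$. A cycle can be represented by an arc meeting $\gamma_{i,j}$ transversally in $\langle \gamma \mid \gamma_{i,j}\rangle$ algebraic points; each transverse passage through $A$ gets, up to homology, one extra copy of the core class $\cy{i,j}$, the sign being that of the intersection point (this is exactly the local picture on the right of Figure~\ref{F:ToreHopf}, where the radius is sent to the radius plus the core). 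Summing over the intersection points with signs gives $\DT{\cy{i,j}}([\gamma]) = [\gamma] + \langle \gamma \mid \cy{i,j}\rangle\, \cy{i,j}$.

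For the second equality I would simply substitute the decomposition $[\gamma] = \sum_{k,l} x_{k,l}\,\cy{k,l}$ into the intersection number and use bilinearity:
\[
\left\langle \gamma \,\big\vert\, \cy{i,j} \right\rangle \;=\; \sum_{k,l} x_{k,l} \left\langle \cy{k,l} \,\big\vert\, \cy{i,j} \right\rangle.
\]
By Proposition~\ref{T:DecompositionMurasugi}$(iii)$, the coefficient $\langle \cy{k,l} \mid \cy{i,j}\rangle$ is nonzero only for the six neighbouring cells of $(i,j)$. Reading that table with the roles of the two cells swapped (which flips the sign, the pairing being antisymmetric), one gets $\langle \cy{k,l}\mid\cy{i,j}\rangle = +1$ for $(k,l) \in \{(i{-}1,j{-}1),\,(i,j{+}2),\,(i{+}1,j{-}1)\}$ and $-1$ for $(k,l)\in\{(i{+}1,j{+}1),\,(i,j{-}2),\,(i{-}1,j{+}1)\}$. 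Plugging in yields
\[
\left\langle \gamma \,\big\vert\, \cy{i,j} \right\rangle = -x_{i{+}1,j{+}1} + x_{i,j{+}2} - x_{i{-}1,j{+}1} + x_{i{-}1,j{-}1} - x_{i,j{-}2} + x_{i{+}1,j{-}1},
\]
which is the claimed expression. The convention that $\cy{i,j}$ is empty (and $x_{i,j}=0$) when $(i,j)$ is not a cell of $D$ makes the formula valid uniformly, including near the boundary of the diagram.

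There is essentially no obstacle here: the only point requiring a little care is the sign bookkeeping when passing from the table in Proposition~\ref{T:DecompositionMurasugi}$(iii)$, which lists $\langle \gamma_{i_1,j_1} \mid \gamma_{i_2,j_2}\rangle$ with $(i,j)=(i_1,j_1)$ fixed, to the expansion above, where $(i,j)$ plays the role of the \emph{second} argument $(i_2,j_2)$; antisymmetry of the intersection form accounts for the swap. One should also double-check that the six displacement vectors $(\pm1,\pm1)$ and $(0,\pm2)$ in the table are exactly the ones appearing in the statement, which they are. Hence the lemma follows.
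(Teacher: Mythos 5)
Your proof is correct and follows essentially the same route as the paper, which merely states that the first equality is the standard transvection formula for a Dehn twist and the second follows from the intersection numbers of Proposition~\ref{T:DecompositionMurasugi}$(iii)$; you have simply spelled out these two steps. One small remark: the detour through antisymmetry is not strictly necessary, since you can read $\langle\cy{k,l}\mid\cy{i,j}\rangle$ directly off the table by setting $(i_1,j_1)=(k,l)$ and $(i_2,j_2)=(i,j)$, though of course both readings agree because the table is skew-symmetric.
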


\begin{proof}
The first equality comes from the definition of Dehn twists. The second one comes from the intersection numbers as computed in Proposition~\ref{T:DecompositionMurasugi}$(iii)$.
\end{proof}

For most cells in the diagram~$D$, the action of the monodromy~$h_D$ on the associated cycle is simple: it is sent on the cycle associated to an adjacent cell. The cells thus sent on adjcent cells are those that have a adjacent cell in SE-position.

\begin{definition}
\label{D:CelluleInterieure}
A cell with coordinates~$(i, j)$ in~$D$ is called~\emph{internal} if $D$ contains a cell with coordinates~$(i+1, j+1)$. It is called~\emph{external} otherwise.
\end{definition}

\begin{lemma}
\label{T:ImageInterieure}
For every~$(i,j)$ that refers to an internal cell of~$D$, we have
\begin{equation*}
h_D(\cy{i,j}) = \cy{i{+}1,j{+}1}.
\end{equation*}
\end{lemma}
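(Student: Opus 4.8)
The statement to prove is that for every cell $(i,j)$ of $D$, the homological monodromy $h_D$ sends the cycle $\cy{i,j}$ to the cycle $\cy{i{+}1,j{+}1}$, where by convention $\cy{i{+}1,j{+}1}$ is the empty cycle (the zero class) when $(i{+}1,j{+}1)$ is not a cell of $D$, i.e.\ exactly when $(i,j)$ is external in the sense of Definition~\ref{D:CelluleInterieure}. The plan is to trace the cycle $\cy{i,j}$ through the explicit composition of transvections furnished by Proposition~\ref{T:FormuleMonodromie}, namely $h_D = \prod_{\col=\col_r}^{\col_l} \prod_{k=\bbot_\col}^{\ttop_\col} \DT_{\gamma_{\col,k}}$, applying the transvections one at a time in that order (right column first, and within a column from bottom to top). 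By Lemma~\ref{T:DehnTwistLorenz}, each transvection $\DT\cy{\col,k}$ modifies a class only by adding a multiple of $\cy{\col,k}$, the multiple being the intersection number of the current class with $\gamma_{\col,k}$.

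First I would observe that $\cy{i,j}$ is unaffected by every transvection $\DT\cy{\col,k}$ that is applied \emph{before} we reach column $i$: such a twist adds a multiple of $\cy{\col,k}$ with $\col>i$, and since $\cy{i,j}$ has trivial $\cy{\col,k}$-coordinate and its intersection with $\gamma_{\col,k}$ stays $0$ as long as no cycle from column $\le i-1$ has been introduced, the class $\cy{i,j}$ is carried along unchanged until we begin column $i$. Within column $i$ the twists are performed from the bottom cell $(i,b_i)$ upward. Each twist $\DT\cy{i,k}$ for $k<j$ adds a multiple of $\cy{i,k}$; but the coefficient of $\cy{i,k}$ along the way is governed by the intersection pairing, and the relevant entries of $\langle\,\cdot\mid\gamma_{i,k}\rangle$ from the formula in Lemma~\ref{T:DehnTwistLorenz} (the terms $\pm x_{i,k{\pm}2}$, $\pm x_{i{\pm}1,k{\pm}1}$) all vanish on the current class because it is still supported away from column $i$ and its neighbours below $(i,j)$. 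So $\cy{i,j}$ remains unchanged until the twist $\DT\cy{i,j}$ itself, which also fixes it ($\langle\gamma_{i,j}\mid\gamma_{i,j}\rangle=0$). Then comes $\DT\cy{i,j{+}2}$ (the next cell up in column $i$, if present): by the table in Proposition~\ref{T:DecompositionMurasugi}$(iii)$ we have $\langle\gamma_{i,j}\mid\gamma_{i,j{+}2}\rangle=-1$, so this twist sends $\cy{i,j}\mapsto\cy{i,j}-\cy{i,j{+}2}$. Continuing up column $i$, the subsequent twist $\DT\cy{i,j{+}4}$ sees $\langle\gamma_{i,j}-\gamma_{i,j{+}2}\mid\gamma_{i,j{+}4}\rangle = 0 - (-1) = +1$, adding back $\cy{i,j{+}4}$, and one checks inductively that after the whole of column $i$ (and still with nothing from column $i-1$ introduced) the running class has become a telescoping alternating sum along column $i$ above $j$. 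The key structural point, which I would verify by a short induction up the column, is that this alternating tail is exactly arranged so that the single transvection $\DT\cy{i{-}1,j{+}1}$ in the next column — whose intersection with the cells $(i,j{+}2),(i,j{+}4),\dots$ alternates in sign by the same table — collapses the whole sum, leaving precisely $\cy{i{+}1,j{+}1}$ (which is $0$ when that cell is absent). After that, no later transvection (columns $\le i-2$) touches the class, because $\cy{i{+}1,j{+}1}$ lives in columns $\ge i+1$ and has zero intersection with all those curves.

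The main obstacle is the bookkeeping in the previous paragraph: one must carry the precise alternating combination of cycles up column $i$, check at each step that the intersection number with the next core curve has exactly the sign needed to extend the telescoping pattern, and then confirm that the lone twist from column $i-1$ in the SW-direction cancels the entire accumulated tail down to the single term $\cy{i{+}1,j{+}1}$. This is where the special combinatorial structure of Lorenz diagrams (only one cell in each ``shifted diagonal'' direction to the SW, namely $(i{-}1,j{+}1)$, interacting with column $i$) is used, and it is the delicate part of the argument. I would organise it as: (1) twists before column $i$ fix $\cy{i,j}$; (2) twists in column $i$ below and at $(i,j)$ fix it; (3) twists in column $i$ above $(i,j)$ produce an explicit alternating sum $\cy{i,j}-\cy{i,j{+}2}+\cy{i,j{+}4}-\cdots$; (4) the twist $\DT\cy{i{-}1,j{+}1}$ telescopes this to $\cy{i{+}1,j{+}1}$; (5) all remaining twists fix the result. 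Steps (1), (2) and (5) are immediate from support considerations; steps (3) and (4) are the substance and rely on the sign table of Proposition~\ref{T:DecompositionMurasugi}$(iii)$, together with the observation that in a Lorenz diagram the cells above $(i,j)$ in column $i$ are exactly $(i,j{+}2),(i,j{+}4),\dots,(i,t_i)=(i,i)$ and that $(i{-}1,j{+}1)$ is the unique cell of column $i-1$ meeting them.
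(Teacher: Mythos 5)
Your proposal breaks down at what you label step~(1), and the error cascades. You claim that every twist $\DT\cy{\col,k}$ with $\col>i$ fixes $\cy{i,j}$ because its intersection with $\gamma_{\col,k}$ ``stays $0$.'' This is false: by the table in Proposition~\ref{T:DecompositionMurasugi}$(iii)$, $\langle \gamma_{i,j}\mid\gamma_{i{+}1,j{+}1}\rangle=+1$, so the twist $\DT\cy{i{+}1,j{+}1}$---applied while processing column $i{+}1$, \emph{before} column~$i$---sends $\cy{i,j}$ to $\cy{i,j}+\cy{i{+}1,j{+}1}$. That extra term is precisely what makes everything else work: the class $\cy{i,j}+\cy{i{+}1,j{+}1}$ is then invariant under all remaining column-$(i{+}1)$ twists and all column-$i$ twists below $(i,j)$ (the contributions of the two summands cancel, e.g.\ $\langle\gamma_{i,j}\mid\gamma_{i{+}1,j{-}1}\rangle=-1$ while $\langle\gamma_{i{+}1,j{+}1}\mid\gamma_{i{+}1,j{-}1}\rangle=+1$), and then $\DT\cy{i,j}$ itself subtracts $\cy{i,j}$ because $\langle\gamma_{i{+}1,j{+}1}\mid\gamma_{i,j}\rangle=-1$. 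After that, $\cy{i{+}1,j{+}1}$ has zero intersection with every remaining core curve, so the computation terminates there. No telescoping alternating sum and no collapse at column $i{-}1$ occurs.

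Two further problems. First, you have the orientation of the second coordinate reversed: in this paper adding $(0,2)$ moves \emph{down} a column, so $(i,j{+}2)$ is below $(i,j)$, not ``the next cell up''; this makes your steps~(3) and~(4) internally inconsistent with the stated bottom-to-top order of twists. Second, even granting your step~(3), step~(4) fails: applying $\DT\cy{i{-}1,j{+}1}$ to $\cy{i,j}-\cy{i,j{+}2}+\cy{i,j{+}4}-\cdots$ adds
$\big(\langle\gamma_{i,j}\mid\gamma_{i{-}1,j{+}1}\rangle-\langle\gamma_{i,j{+}2}\mid\gamma_{i{-}1,j{+}1}\rangle\big)\,\cy{i{-}1,j{+}1}=(+1-(-1)\cdot(-1))\,\cy{i{-}1,j{+}1}\ldots$
wait, $\langle\gamma_{i,j{+}2}\mid\gamma_{i{-}1,j{+}1}\rangle=-1$ so the coefficient is $+1-(-1)(-1)$? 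No: the total intersection is $(+1)\cdot 1 + (-1)\cdot(-1)=1-1=0$, so this twist leaves your alternating sum unchanged and does not produce $\cy{i{+}1,j{+}1}$ at all. Finally, note that your reading of the statement (that it applies to external cells with $\cy{i{+}1,j{+}1}=0$) cannot be right: it would force $h_D(\cy{i,j})=0$ for a basis element of $H_1$, contradicting invertibility of the monodromy; the paper gives a separate and nonzero formula for external cells in Lemma~\ref{T:ImageExterne}, and this lemma is tacitly restricted to internal cells.
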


\begin{proof}
Using the decomposition of~$h_D$ as a product of Dehn twists given by Proposition~\ref{T:FormuleMonodromie}, we see that the image of the cycle~$\cy{i, j}$ is given by
\begin{eqnarray}
\label{Eq:Interne1}
	\cy{i,j} & \donnepar{\cy{\col_r, \col_r}} & \cy{i,j} \donnepar{\cy{\col_r-1, \bbot_{\col_r-1}}} \cdots \donnepar{\cy{i{+}1, j{+}2}} \cy{i, j} \\
\label{Eq:Interne2}
	& \donnepar{\cy{i{+}1, j{+}1}} & \cy{i,j} {+} \cy{i{+}1, j{+}1} \\
	& \donnepar{\cy{i{+}1, j{-}1}} & (\cy{i,j} - \cy{i{+}1, j{-}1}) + (\cy{i{+}1, j{+}1} + \cy{i{+}1, j{-}1})\\
	& & = \cy{i,j} + \cy{i{+}1, j{+}1}\\
	& \donnepar{\cy{i{+}1, j{-}3}} & \cy{i,j} + \cy{i{+}1, j{+}1} \donnepar{\cy{i{+}1, j{-}5}} \dots \donnepar{\cy{i, j{+}4}} \cy{i,j} + \cy{i{+}1, j{+}1} \\
	& \donnepar{\cy{i, j{+}2}} & (\cy{i,j} - \cy{i, j{+}2}) + (\cy{i{+}1, j{+}1} + \cy{i, j{+}2}) \\
	& & =  \cy{i,j} + \cy{i{+}1, j{+}1}\\
	& \donnepar{\cy{i, j}} & \cy{i,j} + (\cy{i{+}1, j{+}1} - \cy{i,j}) = \cy{i{+}1, j{+}1}
\\
\label{Eq:Interne7}
	& \donnepar{\cy{i, j{-}2}} & \cy{i{+}1, j{+}1} \donnepar{\cy{i, j{-}4}} \cdots \donnepar{\cy{\col_l, \vert\col_l\vert}} \cy{i{+}1, j{+}1}.
\end{eqnarray}
Relation~\eqref{Eq:Interne1} comes from the fact that, for $i' > i{+}1$ and for all $j'$, and for $i' = i{+}1$ and $j' > j{+}1$, the intersection number $\langle \cy{i,j} \, \vert \, \cy{i',j'} \rangle$ is zero.
In the same way, for $i' < i$ and for all $j'$, and for $i' = i$ and $j' < j$, we have $\langle \cy{i{+}1,j{+}1} \, \vert \, \cy{i',j'} \rangle = 0$, implying~\eqref{Eq:Interne7}. 
Relation~\eqref{Eq:Interne2} follows from the equality $\left\langle \cy{i,j} \, \big\vert \, \cy{i{+}1,j{+}1} \right\rangle = 1$ stated in Proposition~\ref{T:DecompositionMurasugi}$(iii)$ and from Lemma~\ref{T:DehnTwistLorenz}. 
The other relations follow from similar observations.
\end{proof}

\begin{figure}[htbp]
	\begin{center}
	\includegraphics[width=0.4\textwidth]{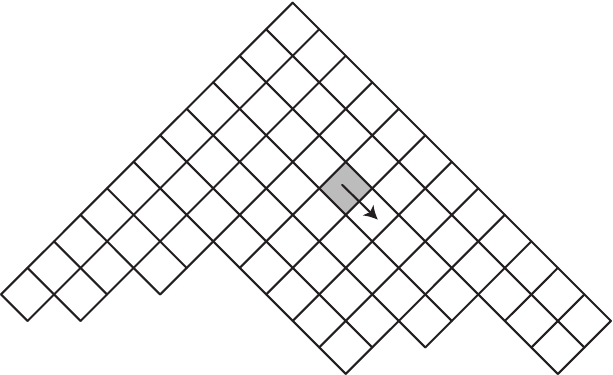}
	\end{center}
	\caption{\small The image of a cycle associated to an internal cell under the monodromy~$h_D$.}
	\label{F:MonodromieInterne}
\end{figure}

Let us turn to external cells, {\it i.e.} cells~$(i,j)$ such that $(i{+}1,j{+}1)$ is not a cell of~$D$.

\begin{definition}
\label{D:Rectangle}
Let $(i_1, j_1)$ and $(i_2, j_2)$ be two cells of diagram~$D$ satisfying $i_1\le i_2$ and $i_1 {+} j_1\ge i_2 {+} j_2$---geometrically this means that the cell~$(i_2, j_2)$ lies in the NNE-octant with respect to the cell~$(i_1,j_1)$. Then the \emph{rectangle}~$\rect{i_1,j_1}{i_2,j_2}$ is defined as the set of cells
\begin{equation*}
\left\{ ~{}~ (k,l)\in D ~{}~ \big\vert ~{}~ i_1\le k \le i_2 ~{}~ \mathrm{and} ~{}~ i_1+j_1 \ge k+l \ge i_2+j_2 ~{}~ \right\}.
\end{equation*}

In this case, the cells~$(i_1,j_1)$ and~$(i_2,j_2)$ are called the SW- and NE-corners of the rectangle respectively, and are denoted~$\mathrm{SW}(\rect{i_1,j_1}{i_2,j_2})$ and~$\mathrm{NE}(\rect{i_1,j_1}{i_2,j_2})$. 
\end{definition}


\begin{definition}
\label{D:EnsemblesAccesibles}
(See Figure~\ref{F:MonodromieExterne}.) Let $(i,j)$ be an external cell of the Young diagram~$D$. For $m=1, 2, \cdots$, we recursively define the \emph{accessible rectangle} $\acces_m(i,j)$ as follows\:

\noindent $(i)$ $\acces_1(i,j) = \rect{i+1,j-1}{i+1,\vert i+1\vert} = \left\{(i+1, j-1), (i+1, j-3), \cdots, (i+1, \vert i+1 \vert)\right\}$;

\noindent $(ii)$ if $\mathrm{NW}(\acces_m(i,j))+(-1,-1)$ is a cell of~$D$, then $\acces_{m+1}(i,j)$ is the rectangle whose SE-corner is the cell~$\mathrm{NW}(\acces_m(i,j))+(-1,-1)$, and whose NE- and SW-corners are on the boundary of the diagram~$D$ (this means that the cells $\mathrm{NE}(\acces_{m+1}(i,j))+(0,-2)$ and~$\mathrm{SW}(\acces_{m+1}(i,j))+(-1,1)$ are not in the diagram); else the construction stops and the rectangle~$\acces_{m'}(i,j)$ is empty for all~$m' > m$.

\vspace{-.6cm}

\end{definition}

\begin{figure}[htbp]
	\begin{center}
	\includegraphics[width=0.7\textwidth]{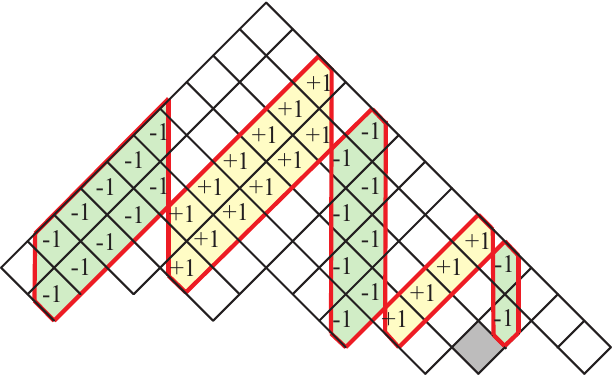}
	\end{center}
	\caption{\small The image of a cycle associated with an external cell under the monodromy.}
	\label{F:MonodromieExterne}
\end{figure}

Note that Definition~\ref{D:EnsemblesAccesibles} implies that, for every column, either no cell of the column lies in an accessible retangle, or some do, in which case they are adjacent, {\it i.e.}, of the form~$(c, t), (c, t+2), $ $\dots, (c, b)$, and they all belong to the same accessible rectangle.

\begin{lemma}
\label{T:ImageExterne}
Let $D$ be a Young diagram, $K_D$ be the associated Lorenz link, and $h_D$ be the associated monodromy. Assume that $(i,j)$ is an external of~$D$. Then we have
\[ h_D(\cy{i,j}) = \sum_{m\ge 1} \sum_{(k,l)\in\acces_m(i,j)} (-1)^m \cy{k,l}. \]
\end{lemma}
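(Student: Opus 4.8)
The plan is to compute $h_D(\cy{i,j})$ directly by tracking the cycle through the product of Dehn twists $\prod_{\col=\col_r}^{\col_l}\prod_{j'=\bbot_\col}^{\ttop_\col}\DT_{\gamma_{\col,j'}}$ supplied by Proposition~\ref{T:FormuleMonodromie}, exactly as in the proof of Lemma~\ref{T:ImageInterieure}, but keeping careful bookkeeping of the new cycles that appear because $(i{+}1,j{+}1)$ is now absent. First I would observe, as in relation~\eqref{Eq:Interne1}, that all the twists $\DT_{\cy{i',j'}}$ with $i'>i{+}1$, or with $i'=i{+}1$ and $j'>j{+}1$, fix $\cy{i,j}$, since the relevant intersection numbers vanish (Proposition~\ref{T:DecompositionMurasugi}$(iii)$); so nothing happens until we reach column $i{+}1$.

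Next I would process column $i{+}1$ from bottom to top. When we apply $\DT_{\cy{i{+}1,j-1}}$ the pairing $\langle\cy{i,j}\mid\cy{i{+}1,j{-}1}\rangle = -1$ creates the term $-\cy{i{+}1,j{-}1}$; continuing up the column, each twist $\DT_{\cy{i{+}1,j-3}},\DT_{\cy{i{+}1,j-5}},\dots$ in turn pairs $-1$ against the cycle just produced (using the $+1$ between vertically-consecutive-by-$2$ neighbours, i.e.\ $\langle\cy{i{+}1,l{+}2}\mid\cy{i{+}1,l}\rangle$) and adds the next cycle down the column, so that after finishing column $i{+}1$ the image is $\cy{i,j} - \sum_{(k,l)\in\acces_1(i,j)}\cy{k,l}$, where $\acces_1(i,j) = \{(i{+}1,j{-}1),(i{+}1,j{-}3),\dots,(i{+}1,\vert i{+}1\vert)\}$ runs down to the bottom of that column. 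Crucially, since $(i{+}1,j{+}1)\notin D$, the twist $\DT_{\cy{i{+}1,j+1}}$ is the identity, so the cancellation that occurred in \eqref{Eq:Interne2}ff. for internal cells does not take place and the whole block $\acces_1$ survives. The $\cy{i,j}$ summand itself is then killed when we finally apply $\DT_{\cy{i,j}}$ in column $i$ (its self-pairing contributes $\cy{i{+}1,j{+}1}$, which is $0$ here), leaving only the $\acces_m$ contributions.

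Then I would run the induction on $m$: having produced $(-1)^m\sum_{(k,l)\in\acces_m(i,j)}\cy{k,l}$, I examine the action of the remaining twists (those in columns $\le i'$ where $i'$ is the column of $\mathrm{NW}(\acces_m)$, processed right-to-left and bottom-to-top within each column). The only cell adjacent to $\acces_m$ and not already inside it whose twist has not yet been applied is the $SW$-neighbour of $\mathrm{NW}(\acces_m(i,j))$, namely $\mathrm{NW}(\acces_m(i,j))+(-1,-1)$; pairing there produces the first cycle of $\acces_{m+1}(i,j)$ with the opposite sign, and, just as in the column-$(i{+}1)$ step, the subsequent twists sweep out the rest of the rectangle $\acces_{m+1}(i,j)$ (up, down, and to the end of each column), contributing exactly $(-1)^{m+1}\sum_{(k,l)\in\acces_{m+1}(i,j)}\cy{k,l}$. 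All other pairings along the way cancel in pairs by the same mechanism as in \eqref{Eq:Interne2}--\eqref{Eq:Interne7}. When $\mathrm{NW}(\acces_m(i,j))+(-1,-1)$ is not a cell of $D$ the process terminates, matching clause~$(ii)$ of Definition~\ref{D:EnsemblesAccesibles}, and the remaining twists (all in columns strictly to the left) fix the accumulated cycle.

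The main obstacle is the combinatorial bookkeeping in the inductive step: one must verify that when the twists of a given column act on a sum of cycles spread over a full accessible rectangle, the many cross terms telescope to leave precisely the next rectangle with the sign flipped, and that no spurious term escapes — in particular that the "boundary" conditions in Definition~\ref{D:EnsemblesAccesibles}$(ii)$ (the $NE$- and $SW$-corners of $\acces_{m+1}$ lying on $\partial D$) are exactly what makes the sweep stop at the right cells. This is a careful but routine check of the intersection pattern of Figure~\ref{F:Intersection}, and I would present it by describing the effect of one column's worth of twists on a "rectangle-supported" cycle once and then invoking it repeatedly, rather than writing out every intermediate line as in Lemma~\ref{T:ImageInterieure}; Figure~\ref{F:MonodromieExterne} carries the geometric content.
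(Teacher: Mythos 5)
Your proposal follows essentially the same route as the paper's proof: process the monodromy column by column using the product from Proposition~\ref{T:FormuleMonodromie}, observe that columns to the right of $i{+}1$ fix $\cy{i,j}$, track how the accumulating cycle evolves through column $i{+}1$, column $i$, and then inductively through columns $c<i$ where each column's worth of twists either transports the support of the rectangle one step to the west or, when the boundary of~$D$ is reached, flips the sign and starts the next accessible rectangle. The paper formalizes exactly this via a truncated product $h_D^{\col}$ and proves the invariant $h_D^{\col}(\cy{i,j}) = \sum_{m\ge 1} \sum_{(k,l)\in\acces_m(i,j),\, k>\col} (-1)^m \cy{k,l}$ by induction on $\col$, with the same two cases you indicate (the $\col{+}1$st column does or does not meet an accessible rectangle, and if it does, the cells are or are not on the west side of that rectangle).

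One small inaccuracy worth flagging: you assert that after $\DT_{\cy{i,j}}$ the contribution $\cy{i,j}$ disappears ``because its self-pairing contributes $\cy{i+1,j+1}$, which is $0$ here.'' The self-intersection $\langle\cy{i,j}\mid\cy{i,j}\rangle$ is always zero; what actually kills $\cy{i,j}$ at that step is the pairing $\langle -\cy{i{+}1,j{-}1}\mid\cy{i,j}\rangle=-1$ coming from the $\acces_1$ block you just created. Moreover, the $\cy{i,j}$ coefficient is not simply ``killed, leaving only the $\acces_m$'': the remaining twists of column~$i$ regenerate the term $\cy{i,|i|}$, and it is only because $(i,|i|)=\mathrm{SE}(\acces_2(i,j))$ that this term fits into the claimed formula. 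You arrive at the right endpoint, but glossing over this makes the base case of your induction look cleaner than it is. Otherwise the approach is the paper's, and the ``one column's sweep'' reformulation is a sensible way to organize the bookkeeping.
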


\begin{proof}
For every column~$\col$ of the diagram~$D$, we introduce a truncated product~$h_D^{\col}$ by
\begin{equation*}
h_D^{\col} = \prod_{k=\col}^{\col_r} \prod_{l=\ttop_k}^{\bbot_k} \tau{\cy{k,l}};
\end{equation*}
remember that $\col_r$ refers to the rightmost column of~$D$, and $b_k$ and $t_k$ denote the bottom and top cells of the column~$k$. For every~$\col$, by definition we have $h_D^{\col} =  \prod_{l=\ttop_\col}^{\bbot_\col} \tau{\cy{\col,l}}\circ h_D^{\col-1}$, and Lemma~\ref{T:FormuleMonodromie} implies $h_D = h_D^{\col_l}$. We will then evaluate each of terms~$h_D^{\col}(\cy{i, j})$ one after the other, for $\col$ going down from~$\col_r$ to~$\col_l$.

First suppose~$\col>i+1$. Then for every~$k\ge\col$ and for every~$l$, the intersection number~$\left\langle \cy{i,j} \, \big\vert \, \cy{k,l} \right\rangle$ is zero. We simply get~$h_D^{\col}(\cy{i,j}) = \cy{i,j}$.

Now suppose $c=i+1$. Owing to the decomposition~$h_D^{\col} =  \prod_{l=\ttop_\col}^{\bbot_\col} \tau{\cy{\col,l}}\circ h_D^{\col-1}$, we find
\begin{eqnarray*}
	\cy{i,j} &\donnepar{\cy{i{+}1, j{-}1}}& \cy{i,j} - \cy{i{+}1,j{-}1} \donnepar{\cy{i{+}1, j{-}3}} \cy{i, j} - (\cy{i{+}1,j{-}1} + \cy{i{+}1,j{-}3}) 
	\\
	&\donnepar{\cy{i{+}1, j{-}5}}& \cy{i, j} - \cy{i{+}1, j{-}1} - (\cy{i{+}1, j{-}3} + \cy{i{+}1, j{-}3}) 
	\donnepar{\cy{i{+}1, j{-}7}} \cdots 
	\\
	&\donnepar{\cy{i{+}1, \vert i \vert {+}1}}& \cy{i, j} - \cy{i{+}1, j{-}1} - \cy{i{+}1, j{-}3} - \cdots - \cy{i{+}1,\vert i \vert {+}1},
\end{eqnarray*}
where $h_D^{i+1}(\cy{i,j}) = \cy{i,j} + \sum_{(k,l) \in \acces_1(i,j)} - \cy{k,l}$.	 

Let us turn to the case $c=i$. We obtain similarly
\begin{eqnarray*}
	&&\cy{i, j} - \cy{i{+}1,j{-}1} - \cy{i{+}1,j{-}3} - \cdots - \cy{i{+}1,\vert i \vert {+}1} 
	\\
	&\donnepar{\cy{i, j}}& \cy{i, j} - (\cy{i{+}1,j{-}1} + \cy{i,j}) - \cy{i{+}1,j{-}3} - \cdots - \cy{i{+}1,\vert i \vert {+}1} 
	\\
	&=& - \cy{i{+}1,j{-}1} - \cy{i{+}1,j{-}3} - \cdots - \cy{i{+}1,\vert i \vert {+}1} 
	\\
	&\donnepar{\cy{i, j{-}2}}& - (\cy{i{+}1,j{-}1} - \cy{i,j{-}2}) - (\cy{i{+}1,j{-}3} + \cy{i,j{-}2}) - \cdots - \cy{i{+}1,\vert i \vert {+}1} 
	\\
	&=& - \cy{i{+}1,j{-}1} - \cy{i{+}1,j{-}3} - \cdots - \cy{i{+}1,\vert i \vert {+}1} \donnepar{\cy{i, j{-}4}} \cdots
	\\
	&\donnepar{\cy{i, \vert i \vert {+}2}}& - \cy{i{+}1,j{-}1} - \cy{i{+}1, j{-}3} - \cdots - \cy{i, \vert i \vert {+} 2} - (\cy{i{+}1, \vert i \vert {+}1} + \cy{i, \vert i \vert {+}2}) 
	\\
	&=& - \cy{i{+}1,\vert i \vert {+}1} - \cy{i{+}1, j{-}1} - \cy{i{+}1, j{-}3} - \cdots - \cy{i{+}1, \vert i \vert {+}1} 
	\\
	&\donnepar{\cy{i, \vert i \vert}}& - \cy{i{+}1, j{-}1} - \cdots - \cy{i{+}1,\vert i \vert {+}1} - (\cy{i{+}1,\vert i \vert {+}1} - \cy{i, \vert i \vert})
	= \cy{i, \vert i \vert} + \sum_{(k,l) \in \acces_1(i,j)} -\cy{k,l}
\end{eqnarray*}
hence $h_D^{i}(\cy{i,j}) = \cy{i,\vert i \vert} + \sum_{(k,l) \in \acces_1(i,j)} - \cy{k,l}$. Observe that the latter expression can be written $\sum_{m\ge 1} \sum_{(k,l)\in\acces_m(i,j), k\ge i} (-1)^m \cy{k,l}$.

We now look at the case~$c<i$. On the shape of the last expression, let us show that for every $c<i$ we have
\begin{equation}
\label{Eq:hD}
h_D^{\col}(\cy{i,j}) = \sum_{m\ge 1} \sum_{(k,l)\in\acces_K^m(i,j), k> \col} (-1)^m \cy{k,l}. 
\end{equation}
We use a induction with $\col$ going down from~$i{-}1$ to~$\col_l$. There are two cases.

\noindent{\it Case 1. There exists an index~$\rec$ so that at least one cell of the $\col{+}1$st column lies in the rectangle~$\acces_{\rec}(i,j)$.} 
Let $(\col{+}1, \toprec), (\col{+}1, \toprec{+}2), \cdots, (\col{+}1, \botrec)$ denote the cells of this column lying in~$\acces_{\rec}(i,j)$.  When transforming~$h_D^{\col{+}1}(\cy{i,j})$ into~$h_D^{\col}(\cy{i,j})$, we perform Dehn twists along curves associated to the $\col$th column. Since the only cycles in $h_D^{\col{+}1}(\cy{i,j})$ having non-zero intersection with curves associated to the $\col$th column are of the form~$[\col{+}1, l]$, these are the only cycles that are modified when transforming~$h_D^{\col{+}1}(\cy{i,j})$ into~$h_D^{\col}(\cy{i,j})$. By induction hypothesis, we have
\begin{equation}
\label{Eq:Decomposition}
	h_D^{\col+1}(\cy{i,j}) = \sum_{m\ge 1} \sum_{(k,l)\in\acces_K^m(i,j), k> \col+1} (-1)^m \cy{k,l} + 
		 \sum_{l=\toprec}^{\botrec} (-1)^{\rec} \cy{\col{+}1,l}.
\end{equation}
Call~$S^c$ the first term in the right-hand side of~\eqref{Eq:Decomposition}. We just noted that Dehn twists along curves of the~$\col$th column do not modify~$S^c$. Therefore we only consider the action of the composition~$\prod_{l=\ttop_\col}^{\bbot_\col} \tau{\cy{\col,l}}$ on the cycle~$\sum_{l=\toprec}^{\botrec} \cy{\col{+}1,l}$. In order to evaluate the result, we again separate two cases, depending on whether the $c{+}1$st column contains the west-border of a rectangle or not.

\noindent {\it Subcase 1.1. The cells $(\col{+}1, \toprec), (\col{+}1, \toprec{+}2), \cdots, (\col{+}1, \botrec)$ are not on the west side of the rectangle~$\acces_{\rec}(i,j)$.} We apply the twists associated to the cells of the $c$th column, and get
\begin{multline*}
	\cy{\col{+}1,\botrec} + \cy{\col{+}1,\botrec{-}2} + \cdots + \cy{\col{+}1,\toprec}
	\donnepar{\cy{\col, \botcol}} \cy{\col{+}1,\botrec} + \cy{\col{+}1,\botrec{-}2} + \cdots + \cy{\col{+}1,\toprec}
	\donnepar{\cy{\col, \botcol{-}2}} \cdots 
	\\
	\donnepar{\cy{\col, \botrec{+}1}} (\cy{\col{+}1,\botrec} + \cy{\col,\botrec{+}1}) + \cy{\col{+}1,\botrec{-}2} + \cdots + \cy{\col{+}1,\toprec}	
	= \cy{\col{+}1,\botrec} + \cy{\col{+}1,\botrec{-}2} + \cdots + \cy{\col,\botrec{+}1} 
	\\
	\donnepar{\cy{\col, \botrec{-}1}} (\cy{\col{+}1,\botrec} - \cy{\col,\botrec{-}1}) + (\cy{\col{+}1,\botrec{-}2} + \cy{\col,\botrec{-}1})
	+ \cdots + \cy{\col{+}1,\toprec} + (\cy{\col{+}1,\botrec{+}1} + \cy{\col,\botrec{-}1}) 
	\\
	= \cy{\col{+}1,\botrec} + \cy{\col{+}1,\botrec{-}2} + \cdots + \cy{\col{+}1,\toprec}
	+ \cy{\col,\botrec{+}1} + \cy{\col,\botrec{-}1}  
	\\
	\donnepar{\cy{\col, \botrec{-}3}} \cy{\col{+}1,\botrec} + (\cy{\col{+}1,\botrec{-}2} - \cy{\col,\botrec{-}3}) 
	+ (\cy{\col{+}1,\botrec{-}4} + \cy{\col,\botrec{-}3}) + \cdots + \cy{\col{+}1,\toprec} + \cy{\col,\botrec{+}1} 
	\\
	+ (\cy{\col,\botrec{-}1} + \cy{\col,\botrec{-}3}) 
	= \cy{\col{+}1,\botrec} + \cy{\col{+}1,\botrec{-}2} + \cdots + \cy{\col{+}1,\toprec} 
	+ \cy{\col,\botrec{+}1} + \cy{\col,\botrec{-}1} + \cy{\col,\botrec{-}3} 
	\\
	\donnepar{\cy{\col, \botrec{-}5}} \cdots 
	\donnepar{\cy{\col, \toprec{-}1}} \cy{\col{+}1,\botrec} + \cy{\col{+}1,\botrec{-}2} + \cdots + \cy{\col{+}1,\toprec} 
	+ \cy{\col,\botrec{+}1} + \cy{\col,\botrec{-}1} + \cdots + \cy{\col,\toprec{+}1} 
	\\	 
	\donnepar{\cy{\col, \toprec-1}} \cy{\col+1,\botrec} + \cy{\col+1,\botrec-2} + \cdots + (\cy{\col+1,\toprec} - \cy{\col,\toprec-1})
	+ \cy{\col,\botrec+1} + \cy{\col,\botrec-1} + \cdots 
	\\ 
	+ (\cy{\col,\toprec+1} + \cy{\col,\toprec-1}) 
	= \cy{\col+1,\botrec} + \cy{\col+1,\botrec-2} + \cdots + \cy{\col+1,\toprec} + \cy{\col,\botrec+1} + \cdots + \cy{\col,\toprec+1} 
	\\
	\donnepar{\cy{\col, \toprec-3}} \cdots
	\donnepar{\cy{\col, \vert \col \vert}} \cy{\col+1,\botrec} + \cy{\col+1,\botrec-2} + \cdots + \cy{\col+1,\toprec}
	+ \cy{\col,\botrec+1} + \cy{\col,\botrec-1} + \cdots + \cy{\col,\toprec+1}.
\end{multline*}

By adding the unchanged term~$S^c$, we get~\eqref{Eq:hD}, as expected.

\noindent{\it Subcase 1.2. The cells~$(\col{+}1, \toprec), (\col{+}1, \toprec{+}2), \cdots, (\col{+}1, \botrec)$ are on the west side of the rectangle~$\acces_{\rec}(i,j)$.} Then the cell~$(c, \botcol)$ lies in the rectangle~$\acces_{r{-}1}(i, j)$, and by the definition of~$\acces_{\rec}(i,j)$ the diagram~$D$ contains no cell in position~$(c{+}1,\botcol{+}1)$, implying $\botrec > \botcol$.
The cells of the $\col$th column that lie in~$\acces_{\rec{+}1}(i,j)$ are of the form~$(\col, \botrecc), (\col, \botrecc + 2), \cdots, (\col, \toprecc)$. Moreover by the definition of~$\acces_{\rec{+}1}(i,j)$ we have $\botrecc=\toprec+1$ and $\toprecc = \vert \col\vert$.
We deduce
\begin{multline*}
	\cy{\col{+}1,\botrec} + \cy{\col{+}1,\botrec{-}2} + \cdots + \cy{\col{+}1,\toprec}
	\donnepar{\cy{\col,\botcol}} \cy{\col{+}1,\botrec} + \cy{\col{+}1,\botrec{-}2} + \cdots + (\cy{\col{+}1,\botcol+1} - \cy{\col,\botcol})\\
	+ (\cy{\col{+}1,\botcol{-}1} + \cy{\col,\botcol}) + \cdots + \cy{\col{+}1,\toprec}
	= \cy{\col{+}1,\botrec} + \cy{\col{+}1,\botrec{-}2} + \cdots + \cy{\col{+}1,\toprec} 
	\donnepar{\cy{\col,\botcol{-}2}} \cdots \\
	\donnepar{\cy{\col,\botrecc{+}2}} \cy{\col{+}1,\botrec} + \cy{\col{+}1,\botrec{-}2} + \cdots + (\cy{\col{+}1,\toprec{+}2}
	- \cy{\col,\botrecc{+}2}) + (\cy{\col{+}1,\toprec} + \cy{\col,\botrecc{+}2})\\
	= \cy{\col{+}1,\botrec} + \cy{\col{+}1,\botrec{-}2} + \cdots + \cy{\col{+}1,\toprec} 
	\donnepar{\cy{\col,\botrecc}} \cy{\col{+}1,\botrec} + \cy{\col{+}1,\botrec{-}2} + \cdots + (\cy{\col{+}1,\toprec} - \cy{\col, \botrecc}) \\
	= \cy{\col{+}1,\botrec} + \cy{\col{+}1,\botrec-2} + \cdots + \cy{\col{+}1,\toprec} - \cy{\col, \botrecc} \\
	\donnepar{\cy{\col,\botrecc{-}2}} \cy{\col+1,\botrec} + \cy{\col{+}1,\botrec{-}2} + \cdots + \cy{\col{+}1,\toprec} 
	- (\cy{\col, \botrecc} + \cy{\col, \botrecc{-}2}) 
	\donnepar{\cy{\col,\botrecc{-}4}} \cdots \\
	\donnepar{\cy{\col,\topcol}} \cy{\col{+}1,\botrec} + \cy{\col{+}1,\botrec{-}2} + \cdots + \cy{\col{+}1,\toprec} - \cy{\col, \botrecc}
	 - \cy{\col, \botrecc{-}2} - \cdots - (\cy{\col, \topcol{+}2} + \cy{\col, \topcol}) \\
	= \cy{\col{+}1,\botrec} + \cy{\col{+}1,\botrec{-}2} + \cdots + \cy{\col{+}1,\toprec}
	- \cy{\col, \botrecc} - \cy{\col, \botrecc{-}2} - \cdots - \cy{\col, \topcol},
\end{multline*}
and once again, by adding the unchanged term~$S^c$, we get~\eqref{Eq:hD}.


\noindent{\it Case 2. No cell of the $\col{+}1$st column lies in an accessible rectangle.} Then the Dehn twists associated to the cells of the~$\col$th column do not modify the cycles of~$h^{\col{+}1}_D(\cy{i,j})$, implying $h^{\col}_D(\cy{i,j}) = h^{\col{+}1}_D(\cy{i,j})$.

The induction is complete. Since the expression of~$h_D^{\col_l}(\cy{i,j})$ coincides with the desired expression for~$h_D(\cy{i, j})$, the proof is complete.
\end{proof}

\subsection{Other spanning surfaces for positive links}
\label{S:MixedSurface}

Our strategy for finding bounds on the eigenvalues of the homological monodromy~$h_D$ associated to a Young diagram~$D$ is to bound the growth rate of all elements of~$H_1(\surfd; \Z)$ when iterating the endomorphism~$h_D$. Using the combinatorial information given by Lemmas~\ref{T:ImageInterieure} and~\ref{T:ImageExterne}, one can devise the following plan. The $\ell^1$-norm of a cycle increases under $h_D$ only if it has non-zero coordinates corresponding to external cells, in which case the norm is multiplied by at most~$n$, where~$n$ is the number of cells of the diagram~$D$. Thus, if we could find a lower bound~$t_0$ for the time needed for the first external cell to appear in the iterates~$h_D^t(c)$, then we would deduce that the $\ell^1$-norm grows asymptotically like~$n^{t/{t_0}}$. This would imply that the moduli of the eigenvalues of~$h_D$ are lower than~$(\log n)/{t_0}$. Unfortunately, the information we have on the monodromy so far does not enable us to have such a lower bound on the ``time of first return in an external cell''.

The goal for the end of the section is to take advantage of the flexibility in the choice of the spanning surface---actually the choice in the presentation of the surface---to obtain another expression for the monodromy, and to let the strategy work.

Let $b$ be a braid and let~$K$ be its closure. The standard way of drawing~$K$ consists in connecting the top and bottom extremities of~$b$ with strands behind~$b$ (Figure~\ref{F:SurfaceStandard} again). However, we may as well connect with strands in front of~$b$, or even use a combination of back and front connections, without changing the isotopy class of~$K$. As displayed on Figure~\ref{F:Radiateur}, a spanning surface of~$K$ is associated with every such combination. This spanning surface is always an iterated Murasugi sum, but the Murasugi order of the Hopf bands depends on the choice of a front or a back connection for each strand, and so does the presentation of the monodromy of~$K$.

\begin{figure}[htbp]
	\begin{center}
	\includegraphics[width=0.5\textwidth]{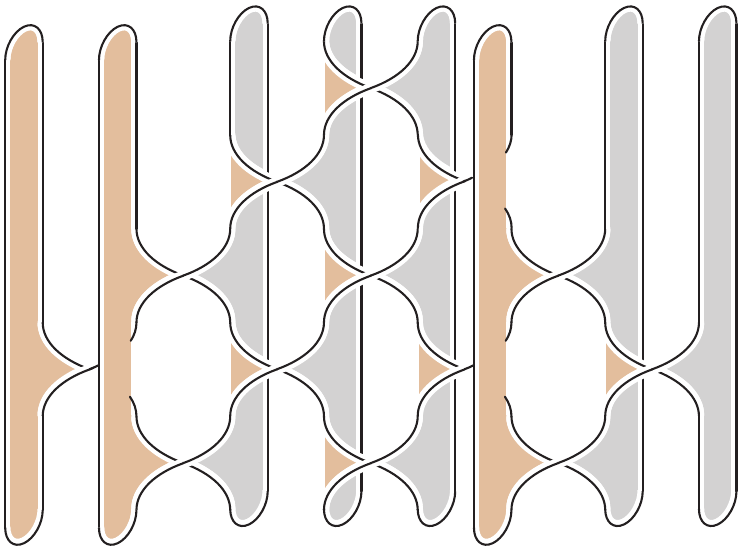}
	\end{center}
	\caption{\small The spanning surface associated to the diagram~$[3,2,1]$ and to the choice $\sigma=\{+,+,-,-,-,+,-,-\}$.}
	\label{F:Radiateur}
\end{figure} 

\begin{definition}
\label{D:Radiateur}
Let $b$ be a braid with~$s$ strands, and $\sigma$ be an element of~$\{+, -\}^s$. We define~$\hat b^\sigma$ to be the diagram obtained from~$b$ by connecting the top and bottom ends of the $i$th strand in front the braid~$b$ if the $i$th element of~$\sigma$ is~$+$, and beyond~$b$ if it is~$-$. We define~$\surf_{b}^\sigma$ to be the surface obtained by applying the Seifert algorithm to~$\hat b^\sigma$, {\it i.e.} by adding $s$ disks filling the connecting strands of~$\hat b^\sigma$, and connecting them to their neighbours with ribbons attached at each crossing (see Figure~\ref{F:Radiateur} for an example).
\end{definition}

The knot defined by~$\hat b^\sigma$ does not depend on~$\sigma$, since we can move strands from ahead to behind using isotopies. But there is no reason that these isotopies extend to the surfaces~$\surf_{b}^\sigma$. Nethertheless, because the knot~$K$ is fibered, it admits a unique spanning surface of minimal genus, and, therefore, all surfaces associated to various choices~$\sigma$ must be isotopic.

For every~$\sigma$, the surface~$\surf_{b}^{\sigma}$ is an iterated Murasugi sum of Hopf bands. While this surface is similar to~$\Sigma$, the combinatorics associated with~$\surf_b^\sigma$ is in general different from the one associated with~$\Sigma$. It turns out that the following choice enables us to realise our strategy for finding bounds on eigenvalues. 

\begin{definition}
\label{D:RadiateurLorenz}
(See Figure~\ref{F:RadiateurLorenz}.)
Let~$D$ be a Young diagram. Let $b$ denote the associated Lorenz braid, $s$ its number of strands, and $K$ its closure. Remember that we write~$\col_l$ ({\it resp.\ }$\col_r$) for the index of the left ({\it resp.\ }right) column of~$D$.
Let $\sigma_D$ denote the element~$ (+, +, \ldots, -, \ldots)$ of~$\{+, -\}^{s}$, with $\col_l{+}1$ $+$signs and $\col_r{+}1$ $-$signs. We define the \emph{mixed Seifert surface~$\seifbis$ of~$K$} to be the surface~$\surf_{b}^{\sigma_D}$. 
\end{definition}

\begin{figure}[htbp]
	\begin{center}
	\includegraphics[width=0.5\textwidth]{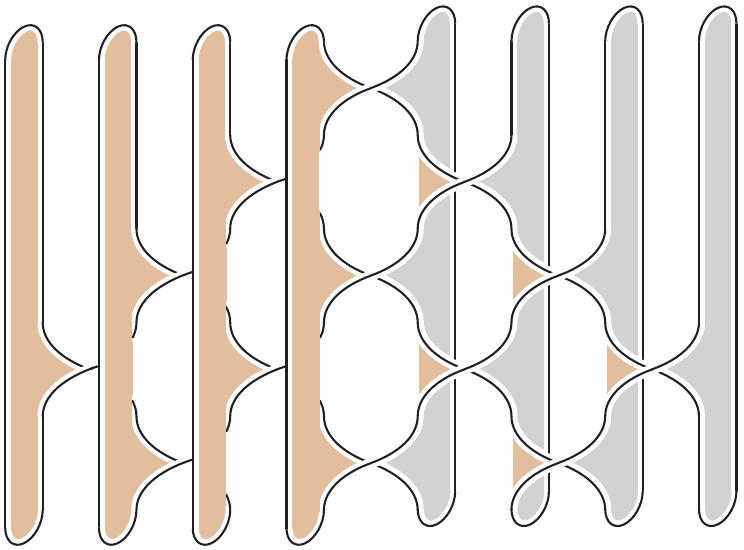}
	\end{center}
	\caption{\small The mixed Seifert surface associated to the diagram~$[3,2,1]$.}
	\label{F:RadiateurLorenz}
\end{figure} 

We can now derive an analog of Proposition~\ref{T:DecompositionMurasugi} for the mixed surface. In the sequel, it will be necessary to consider the inverse~$h_D^{-1}$ of the monodromy instead of~$h_D$---the advantage being that external cells will be replaced by central cells, whose images under~$h_D^{-1}$ are better controlled than the images of external cells under~$h_D$. 

\begin{proposition}
\label{T:DecompositionMurasugiBis}

Let $D$ be a Young diagram with $n$ cells. Let $K_D$ be the associated Lorenz knot and $\seifbis$ be associated mixed Seifert surface.
\begin{itemize}
\item[($i$)] The surface~$\seifbis$ is an iterated Murasugi sum of $n$ Hopf bands~$\wt\Hopf_{i,j}$, each of them associated to one of the $n$ cells of~$D$.
\item[($ii$)] The band~$\wt\Hopf_{i_1,j_1}$ is before~$\wt\Hopf_{i_2,j_2}$ in Murasugi order if and only if we have
\begin{eqnarray*}
 & & i_1, i_2 \ge 0, \quad i_1\ge i_2, \quad i_1+j_1 \ge i_2 + j_2, \quad \mbox{and} \quad (i_1,j_1) \neq (i_2,j_2) \\
 &\mbox{or}& i_1, i_2 \le 0, \quad \vert i_1\vert \ge \vert i_2\vert, \quad \vert i_1\vert + j_1 \ge \vert i_2\vert + j_2, \quad \mbox{and} \quad (i_1,j_1) \neq (i_2,j_2).
\end{eqnarray*}
\item[($iii$)] For each cell~$(i,j)$ of~$D$, choose a curve~$\wt\gamma_{i,j}$ that is the core of the annulus~$\wt\Hopf_{i, j}$. Then the cycles $\{[\wt\gamma_{i,j}]\}_{(i,j)\in D}$ form a basis of~$H_1(\seifbis; \Z)$. The intersection number $\left\langle \wt\gamma_{i_1,j_1} \, \big\vert \, \wt\gamma_{i_2,j_2} \right\rangle$ is
\begin{equation*}
\begin{cases}
\ +1 &\mbox{if $(i_2, j_2) = (i_1+1, j_1+1)$, $(i_1, j_1-2)$, or $(i_1-1, j_1+1)$}, \\
\ -1 &\mbox{if $(i_2, j_2) = (i_1+1, j_1-1)$, $(i_1, j_1+2)$, or $(i_1-1, j_1-1)$}, \\
\ \ 0 &\mbox{otherwise}.
\end{cases}
\end{equation*}
\item[($iv$)] Denote by $\wt\tau[i,j]$ the Dehn twist of The monodromy~$h_D$ of~$K$ satisfies 
\begin{eqnarray*}
h_D &=& \prod_{\col=\col_r}^{1} \prod_{j=\bbot_\col}^{\ttop_\col} \tau{\cybis{\col,j}} 
\circ  \prod_{\col=\col_l}^{-1} \prod_{j=\bbot_\col}^{\ttop_\col} \tau{\cybis{\col,j}} 
\circ \prod_{j=\bbot_0}^{\ttop_0} \tau{\cybis{0,j}},
\\
h_D^{-1} &=& \prod_{j=\ttop_0}^{\bbot_0} \tau{\cybis{0,j}}^{-1}
\circ  \prod_{\col=-1}^{\col_l} \prod_{j=\ttop_\col}^{\bbot_\col} \tau{\cybis{\col,j}}^{-1} 
\circ \prod_{\col=1}^{\col_r} \prod_{j=\ttop_\col}^{\bbot_\col} \tau{\cybis{\col,j}}^{-1}.
\end{eqnarray*}
\end{itemize}
\end{proposition}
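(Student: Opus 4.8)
The plan is to adapt the proof of Proposition~\ref{T:DecompositionMurasugi}, replacing $\surfd$ by $\seifbis=\surf_b^{\sigma_D}$; only the description of the Murasugi order in part~$(ii)$ requires a genuinely new argument. For $(i)$ and $(iii)$ I would use that $\seifbis$ is \emph{locally} the same surface as $\surfd$: the two differ only in the way the $s$ connecting arcs of the closure are routed---in front of $b$ over the left part of the diagram, behind $b$ over the right part, as prescribed by $\sigma_D$---and this modification takes place away from the crossings of $b$. So near every crossing, and in every slot between two consecutive crossings of a column, $\seifbis$ is indistinguishable from $\surfd$, and (reading Figure~\ref{F:RadiateurLorenz}) one gets one Hopf band $\wt\Hopf_{i,j}$ per cell $(i,j)$, which is $(i)$; that $\surf_b^{\sigma}$ is an iterated Murasugi sum of Hopf bands for every $\sigma$ is already recorded right after Definition~\ref{D:Radiateur}. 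For $(iii)$, since $\seifbis$ is isotopic to $\surfd$ (both being the unique minimal genus spanning surface of the fibered knot $K_D$), the group $H_1(\seifbis;\Z)$ has rank $n$, so the $n$ generators $[\wt\gamma_{i,j}]$ form a basis (the Euler characteristic count in the proof of Proposition~\ref{T:DecompositionMurasugi}$(iii)$ applies verbatim); the intersection numbers are read off near the overlap of two adjacent bands, which lies inside the braid region where nothing has changed, so one recovers the same table as for $\surfd$.

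The core of the matter is $(ii)$. As in the proof of Proposition~\ref{T:DecompositionMurasugi}$(ii)$, I would determine, cell by cell from Figure~\ref{F:RadiateurLorenz}, the polygon along which $\wt\Hopf_{i,j}$ is attached to the union of the bands glued before it. For a cell over the part connected behind $b$ the picture is unchanged, this polygon lies in $\wt\Hopf_{i+1,j-1}\cup\wt\Hopf_{i+1,j+1}\cup\wt\Hopf_{i,j+2}$, and these three bands must come first. For a cell over the part connected in front of $b$, connecting in front reflects the local picture left--right, so the polygon lies in the mirrored triple $\wt\Hopf_{i-1,j-1}\cup\wt\Hopf_{i-1,j+1}\cup\wt\Hopf_{i,j+2}$ and the gluing constraint now propagates towards $\col_l$ rather than towards $\col_r$. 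Finally, a band glued in front of $b$ and a band glued behind $b$ are attached along disjoint polygons on opposite sides of the plane of the braid, so neither precedes the other (cf.\ the commutation remark preceding Definition~\ref{D:OrdreMurasugi}). Taking the transitive closure of these immediate precedences---an induction of exactly the same shape as in Proposition~\ref{T:DecompositionMurasugi}$(ii)$---gives the disjoint union of the NNE-octant order in the coordinate $i$ on the cells with $i\ge 0$ and the NNE-octant order in the coordinate $\vert i\vert$ on the cells with $i\le 0$, which is $(ii)$.

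Granting $(ii)$, part $(iv)$ follows by choosing one linear extension of the Murasugi order---say the columns $\col_r,\col_r{-}1,\dots,1$, then $\col_l,\col_l{+}1,\dots,-1$, then column $0$, each column run from its bottom cell to its top cell---checking (a routine verification against the two clauses of $(ii)$) that it refines the Murasugi order, and feeding it into Proposition~\ref{T:DecompositionMonodromie}$(ii)$ together with the intersection table of $(iii)$; this yields the displayed product for $h_D$, and the formula for $h_D^{-1}$ is simply its inverse, obtained by reversing the order of the factors and replacing each transvection by its inverse.

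The one real obstacle is $(ii)$: one must follow carefully how the Seifert disk of a front-connected strand sits with respect to the plane of the braid, and which ribbons its Hopf band runs over, in order to be certain that connecting a strand in front turns the propagation rule $i\mapsto i{+}1$ into $i\mapsto i{-}1$ and that a front band and a back band never constrain each other. Once these immediate precedence relations are established, the transitive closure, the choice of a linear extension, and the two monodromy formulas are routine.
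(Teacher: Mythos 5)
Your proof follows the same route as the paper's, which essentially defers to the proof of Proposition~\ref{T:DecompositionMurasugi} for parts $(i)$--$(iii)$ and invokes Proposition~\ref{T:DecompositionMonodromie} for $(iv)$; you have correctly identified the one new geometric input (connecting a strand in front reflects the propagation direction of the Murasugi precedence).

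One detail you should handle more carefully: your partition into cells ``over the part connected behind'' and ``over the part connected in front'' is not exhaustive. The central column $i=0$ corresponds to crossings between the last front-connected strand and the first back-connected strand, so each band $\wt\Hopf_{0,j}$ straddles the transition: its gluing polygon actually meets bands on \emph{both} sides, namely $\wt\Hopf_{\pm1,j-1}$, $\wt\Hopf_{\pm1,j+1}$, and $\wt\Hopf_{0,j+2}$. This is in fact required by the claimed statement $(ii)$, since there the two clauses are not disjoint but overlap at $i=0$ (e.g.\ $(-1,j-1)\MuraOrdre(0,j)$ and $(1,j-1)\MuraOrdre(0,j)$ must both hold, and neither can arise by transitivity through cells of nonzero $i$). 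Your closing sentence that a front band and a back band ``never constrain each other'' is therefore an overstatement: it is true for a band with $i\le-1$ versus a band with $i\ge1$, but not when the central column is involved. Once you add the $i=0$ case as a third clause in the immediate-precedence analysis, the transitive closure does give exactly $(ii)$, and the rest of your argument (linear extension, application of Proposition~\ref{T:DecompositionMonodromie}, inversion for $h_D^{-1}$) goes through as you describe.
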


\begin{proof}
The proof of $(i)$, $(ii)$, $(iii)$ is similar to the proof of their counterparts in Proposition~\ref{T:DecompositionMurasugiBis}. As for~$(iv)$, Dehn twists are performed in the order depicted in Figure~\ref{F:OrdreMonodromieBis}. It is compatible with the Murasugi order of~$(ii)$. The expression for~$h_D$ then follows from Proposition~\ref{T:DecompositionMonodromie}.
\end{proof}

\begin{figure}[htbp]
	\begin{center}
	\includegraphics[width=0.4\textwidth]{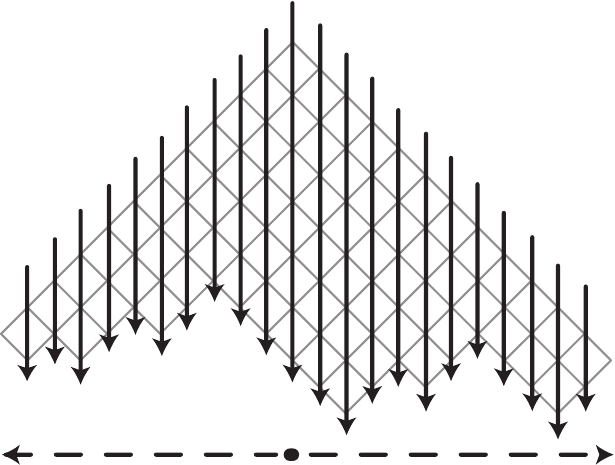}
	\end{center}
	\caption{\small The order of Dehn twists for (the inverse of) the monodromy; this order is compatible with the Murasugi order associated to the mixed Seifert surfaces.}
	\label{F:OrdreMonodromieBis}
\end{figure}


\subsection{Combinatorics of the monodromy: second attempt}
\label{S:SecondPass}

All results of Section~\ref{S:FirstPass} can now be restated in the context of mixed Seifert surface. The cells of the Young diagram can no longer be partitioned into internal and external cells, but, instead, we use the five types displayed on Figure~\ref{F:TypeCellules}. Hereafter we shall complete the computation for the inverse~$h_D^{-1}$ of the monodromy, which turns out to be (slightly) simpler that the computation of~$h_D$.

In this part, we fix a Young diagram~$D$ with $n$~cells. Let~$K$ be the associated Lorenz knot, and $\seifbis$ be the associated mixed Seifert surface, seen as an iterated Murasugi sum of~$n$ Hopf bands. Let~$\{[\gamma_{i, j}]\}_{(i, j)\in D}$ be a family of curves on~$\seifbis$, each of them being the core of one of the Hopf bands. We write $\cybis{i, j}$ for the class of~$\gamma_{i, j}$ in~$H_1(\seifbis; \Z)$.

The analog of Lemma~\ref{T:DehnTwistLorenz} is

\begin{lemma}
\label{T:DehnTwistLorenzBis}
Let $\gamma$ be a curve on~$\seifbis$. Suppose that its class~$\cybis\gamma$ in~$H_1(\seifbis; \Z)$ is equal to~$\displaystyle\sum_{k,l} x_{k,l} \cybis{k,l}$. Then for every cell~$(i,j)$ of~$D$, we have
\begin{eqnarray*}
\tau\cybis{i,j}^{-1}\left(\cybis{\gamma}\right) &=& \cybis{\gamma} - \left\langle \cybis{h} \, \big\vert \, \cybis{i,j} \right\rangle \cybis{i,j} \\
 &= & \cybis{\gamma} + \left( x_{i+1,j+1} - x_{i,j+2} + x_{i-1,j+1} - x_{i-1,j-1} + x_{i,j-2} - x_{i+1,j-1}\right) \cybis{i,j}. 
\end{eqnarray*}

\vspace{-.35cm}

\end{lemma}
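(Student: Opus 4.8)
The plan is to obtain the statement directly from two facts already at our disposal: the homological action of a Dehn twist, and the intersection numbers on the mixed surface. Recall from the computation following Definition~\ref{D:Hopf} (the matrix $\mama 1 1 0 1$ in the model annulus, carried over to an arbitrary surface because a twist is supported near its curve and preserves the intersection form, exactly as in Lemma~\ref{T:DehnTwistLorenz}) that a positive Dehn twist along a curve~$\gamma$ acts on homology by the transvection $x \longmapsto x + \langle x \mid \gamma\rangle\,\gamma$. Inverting this transvection gives $\tau_\gamma^{-1}(x) = x - \langle x \mid \gamma\rangle\,\gamma$; applied with $\gamma = \wt\gamma_{i,j}$ and $x = \cybis\gamma$, this is precisely the first displayed equality
\[
\tau\cybis{i,j}^{-1}\left(\cybis\gamma\right) = \cybis\gamma - \langle \cybis\gamma \mid \cybis{i,j}\rangle\,\cybis{i,j}.
\]

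For the second equality I would expand $\cybis\gamma = \sum_{k,l} x_{k,l}\,\cybis{k,l}$, use bilinearity, and invoke antisymmetry of the intersection form to rewrite the coefficient as $-\langle\cybis\gamma\mid\cybis{i,j}\rangle = \langle\cybis{i,j}\mid\cybis\gamma\rangle = \sum_{k,l} x_{k,l}\,\langle\cybis{i,j}\mid\cybis{k,l}\rangle$. Reading the values $\langle\cybis{i,j}\mid\cybis{k,l}\rangle$ off the table in Proposition~\ref{T:DecompositionMurasugiBis}$(iii)$ with $(i_1,j_1)=(i,j)$, only the six cells adjacent to~$(i,j)$ contribute, and the sum equals $x_{i+1,j+1} + x_{i,j-2} + x_{i-1,j+1} - x_{i+1,j-1} - x_{i,j+2} - x_{i-1,j-1}$. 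Substituting into the first equality yields
\[
\tau\cybis{i,j}^{-1}\left(\cybis\gamma\right) = \cybis\gamma + \left(x_{i+1,j+1} - x_{i,j+2} + x_{i-1,j+1} - x_{i-1,j-1} + x_{i,j-2} - x_{i+1,j-1}\right)\cybis{i,j},
\]
which is the claimed formula.

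The lemma presents no genuine difficulty: the only thing to watch is the bookkeeping of signs. Passing from $\langle\cybis{i,j}\mid\cybis{k,l}\rangle$, as tabulated in Proposition~\ref{T:DecompositionMurasugiBis}$(iii)$, to $\langle\cybis{k,l}\mid\cybis{i,j}\rangle$ flips all six signs by antisymmetry, and the minus sign coming from $\tau^{-1}$ flips them back, so that the coefficient of $x_{k,l}\,\cybis{i,j}$ in $\tau\cybis{i,j}^{-1}(\cybis\gamma)-\cybis\gamma$ is simply $\langle\cybis{i,j}\mid\cybis{k,l}\rangle$; hence the six signs in the final expression are exactly those appearing in the intersection table, which is what makes the formula take the stated shape.
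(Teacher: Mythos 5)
Your proof is correct and follows essentially the same line the paper takes: the paper omits the proof of this lemma, pointing only to its analogy with Lemma~\ref{T:DehnTwistLorenz}, whose proof is precisely what you spell out (first equality from the homological action of a Dehn twist, second from the intersection table of Proposition~\ref{T:DecompositionMurasugiBis}$(iii)$). Your sign-tracking via antisymmetry is the right way to see that the two sign flips cancel, which is exactly why the six signs in the final formula coincide with those in the intersection table.
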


The analog of internal cells -- the cells whose image under~$h_D^{-1}$ is an adjacent cell -- are the \emph{peripheral} cells. 

\begin{definition}
\label{D:CelluleCentrale}
A cell of~$D$ with coordinates~$(i,j)$ is called \emph{central} for $i=0$, \emph{right medial} for $i = 1$, \emph{left medial} for $i = -1$, \emph{right peripheral} for $i > 1$, and \emph{left peripheral} for $i < -1$.
\end{definition}

\begin{lemma}
\label{T:ImagePeripherique}
Assume that $(i,j)$ is a right ({\it resp.\ }left) peripheral cell of~$D$. Then we have
\begin{equation*}
h_D^{-1}(\cybis{i,j}) = \cybis{i{-}1,j{-}1} \quad ({\it resp.}~\cybis{i{+}1, j{-}1}).
\end{equation*}
\end{lemma}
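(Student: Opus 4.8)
The strategy is to compute $h_D^{-1}(\cybis{i,j})$ directly by applying the explicit product of inverse transvections given in Proposition~\ref{T:DecompositionMurasugiBis}$(iv)$, exactly as in the proof of Lemma~\ref{T:ImageExterne} for $h_D$ but with the roles reversed: here the ``simple'' cells are the peripheral ones (analogues of internal cells), so the image should collapse to a single basis cycle. I treat the right peripheral case $i>1$; the left case $i<-1$ is symmetric via the $c\mapsto -c$ reflection built into the decomposition of $h_D^{-1}$. Using the second factorisation in Proposition~\ref{T:DecompositionMurasugiBis}$(iv)$, namely $h_D^{-1} = \prod_{j=\ttop_0}^{\bbot_0}\tau\cybis{0,j}^{-1}\circ\prod_{\col=-1}^{\col_l}\prod_{j=\ttop_\col}^{\bbot_\col}\tau\cybis{\col,j}^{-1}\circ\prod_{\col=1}^{\col_r}\prod_{j=\ttop_\col}^{\bbot_\col}\tau\cybis{\col,j}^{-1}$, I track the image of $\cybis{i,j}$ column by column, starting from column $\col_r$ and moving leftward.

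First I observe that for columns $\col>i$ the twists $\tau\cybis{\col,l}^{-1}$ fix $\cybis{i,j}$, since by Proposition~\ref{T:DecompositionMurasugiBis}$(iii)$ the intersection number $\langle\cybis{i,j}\,\vert\,\cybis{\col,l}\rangle$ vanishes whenever $\col\geq i+2$, or $\col=i+1$ with arbitrary $l$ except the three neighbouring values; but in column $i+1$ only the cells $(i{+}1,j{+}1)$, $(i{+}1,j{-}1)$ are relevant (the cell $(i,j{+}2)$ is in column $i$), and I check that applying $\prod_{l}\tau\cybis{i+1,l}^{-1}$ in the prescribed top-to-bottom order leaves $\cybis{i,j}$ unchanged: the contribution at $(i{+}1,j{+}1)$ and at $(i{+}1,j{-}1)$ cancel, just as in lines~\eqref{Eq:Interne1}–\eqref{Eq:Interne7} of Lemma~\ref{T:ImageExterne}. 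Then, processing column $i$, I apply $\prod_{l=\ttop_i}^{\bbot_i}\tau\cybis{i,l}^{-1}$ to $\cybis{i,j}$: the only relevant neighbours inside column $i$ are $(i,j{+}2)$ and $(i,j{-}2)$, and at the single step $\tau\cybis{i,j}^{-1}$ the self-intersection term $\langle\cybis{i,j}\,\vert\,\cybis{i,j}\rangle$ does not occur, but the twist along $(i,j)$ interacts with the previously-untouched neighbour in column $i{-}1$ — wait, more carefully: after column $i$ the current cycle is still a combination supported near $(i,j)$; I use Lemma~\ref{T:DehnTwistLorenzBis} to see that the net effect of the column-$i$ twists is to replace $\cybis{i,j}$ by $\cybis{i,j}+\cybis{i-1,j-1}$ arising from the twist at $(i,j)$ acting on $\cybis{i,j}$ (since $\langle\cybis{i,j}\,\vert\,\cybis{i,j}\rangle$ is irrelevant and the genuine contribution comes from the $x_{i-1,j-1}$ term once it appears), and then column $i{-}1$ converts this to the single cycle $\cybis{i-1,j-1}$. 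Concretely, the telescoping is the mirror image of the computation $\cybis{i,j}\mapsto\cybis{i+1,j+1}$ in Lemma~\ref{T:ImageInterieure}, now running with inverse twists and the opposite column sweep.

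Finally, for columns $\col<i-1$ the cycle $\cybis{i-1,j-1}$ is fixed by the remaining twists: by Proposition~\ref{T:DecompositionMurasugiBis}$(iii)$, $\langle\cybis{i-1,j-1}\,\vert\,\cybis{\col,l}\rangle=0$ for all $\col\leq i-2$, so the product $\prod_{\col=i-2}^{\col_l}\prod_l\tau\cybis{\col,l}^{-1}$ (and the central column factor, since $i-1\geq 1>0$ means column $i-1$ is already fully processed before we reach column $0$) acts as the identity. Assembling the three phases yields $h_D^{-1}(\cybis{i,j})=\cybis{i-1,j-1}$. The left peripheral case is obtained verbatim after the substitution $i\mapsto -i$, which exchanges the two outer blocks in the factorisation of $h_D^{-1}$ and turns $\cybis{i-1,j-1}$ into $\cybis{i+1,j-1}$. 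The main obstacle is purely bookkeeping: one must verify that in the prescribed order of the inverse twists no ``premature'' cross-term appears from cells lying strictly between columns — i.e. that the sweep really does keep the intermediate cycle supported in at most two adjacent columns at each stage — which is exactly the role played by the hypothesis $i>1$ (it guarantees the relevant accessible region is a single cell, not a chain of rectangles as in Lemma~\ref{T:ImageExterne}). I will present this as a short chain of $\donnepar{\cdot}$ arrows mirroring Lemma~\ref{T:ImageInterieure}, and omit the symmetric left case.
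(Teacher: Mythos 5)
Your overall strategy matches the paper's (the paper's entire proof is the sentence ``mimics the one of Lemma~\ref{T:ImageInterieure}'', i.e.\ a column-by-column application of the factorisation from Proposition~\ref{T:DecompositionMurasugiBis}$(iv)$), but the direction of your column sweep is reversed, and that error is fatal rather than cosmetic. In the mixed decomposition, $h_D$ applies twists starting from the periphery (column~$\col_r$ first, then inward to column~$1$, then $\col_l$ inward to $-1$, then column~$0$ last), matching the Murasugi order of Proposition~\ref{T:DecompositionMurasugiBis}$(ii)$; hence $h_D^{-1}$ applies twists starting from the \emph{centre} (column~$0$ first, then $-1,\dots,\col_l$, then $1,2,\dots,\col_r$). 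You can confirm this against the paper's own chain of arrows in the proof of Lemma~\ref{T:ImageEquerre}, which opens with $\tau\cybis{0,j-2}^{-1}$, i.e.\ a column-$0$ twist. You instead sweep from $\col_r$ leftward. With the correct outward sweep, for $i\geq 2$ column~$i{-}1$ is reached \emph{before} column~$i$: its twist $\tau\cybis{i-1,j-1}^{-1}$ sends $\cybis{i,j}$ to $\cybis{i,j}+\cybis{i-1,j-1}$ (since $\langle\cybis{i,j}\,\vert\,\cybis{i-1,j-1}\rangle=-1$), the twist $\tau\cybis{i-1,j+1}^{-1}$ then has zero net intersection and changes nothing, and at column~$i$ the twist $\tau\cybis{i,j}^{-1}$ removes the $\cybis{i,j}$ term (since $\langle\cybis{i-1,j-1}\,\vert\,\cybis{i,j}\rangle=+1$), leaving $\cybis{i-1,j-1}$; all later columns are inert.

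Two further symptoms of the same confusion. First, your claim that the column-$i{+}1$ pass ``leaves $\cybis{i,j}$ unchanged'' is false: the top-down twist at $(i{+}1,j{-}1)$ genuinely adds $+\cybis{i+1,j-1}$, and the subsequent twist at $(i{+}1,j{+}1)$ has zero intersection and does not cancel it (this is exactly what happens in Lemma~\ref{T:ImageInterieure} for $h_D$ on the standard surface). Second, your assertion that ``the net effect of the column-$i$ twists is to replace $\cybis{i,j}$ by $\cybis{i,j}+\cybis{i-1,j-1}$'' is structurally impossible: a transvection $\tau\cybis{i,l}^{\pm 1}$ only adds integer multiples of the cycle $\cybis{i,l}$, so a column-$i$ pass can never introduce a column-$(i{-}1)$ basis element. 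If one actually pushes your right-to-left sweep through to the end, the computation produces $\cybis{i+1,j-1}$ rather than $\cybis{i-1,j-1}$ — precisely the answer for the \emph{left} peripheral cells — which is another sign that the two halves of the diagram and the two sweep directions have been interchanged. The fix is simply to redo the telescoping with the inside-out order; once the first twist at $(i{-}1,j{-}1)$ and the later one at $(i,j)$ are placed correctly, the cancellation pattern is indeed the mirror of Lemma~\ref{T:ImageInterieure}.
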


The proof mimics the one of Lemma~\ref{T:ImageInterieure}.
 
\begin{figure}[htbp]
	\begin{center}
	\includegraphics[width=0.4\textwidth]{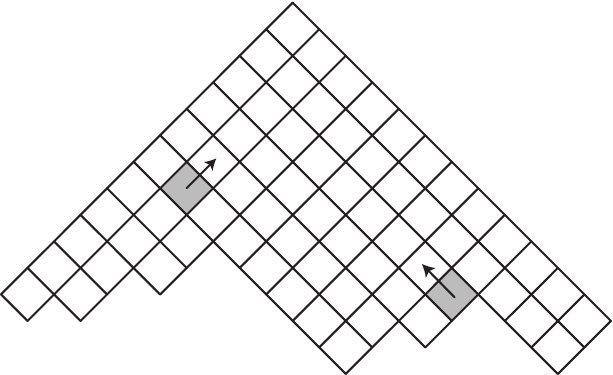}
	\end{center}
	\caption{\small Images of peripheral cells under the inverse of the monodromy.}
	\label{F:MonodromiePeripherique}
\end{figure}

Now, we are looking for an expression of the images under~$h_D^{-1}$ of central, right medial and left medial cells. Actually, rather than central cells, we look at another family of cycles, whose image is simpler.

\begin{definition}
\label{D:Equerre}
Let $(0,j)$ denote a central cell of~$D$, we call~\emph{try square} the set~$\equerre{j}$ of cells $\big\{ (0,j), (-1,j{-}1), (1,j{-}1), (-2, j{-}2), (2, j{-}2), \cdots (-j/2, j/2), (j/2,j/2) \big\}$.
\end{definition}

\begin{lemma}
\label{T:ImageEquerre}
Let $\equerre{j}$ be a try square of~$D$. Then we have
\begin{equation*}
h_D^{-1}\big(\sum_{(k,l)\in\equerre{j}}\cybis{k,l}\big) = \sum_{(k,l)\in\equerre{j{-}1}}\cybis{k,l}. 
\end{equation*}
\end{lemma}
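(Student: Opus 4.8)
The proof is a direct computation of the same nature as those in the proofs of Lemmas~\ref{T:ImageInterieure}, \ref{T:ImageExterne} and~\ref{T:ImagePeripherique}: one applies to the cycle $\sum_{(k,l)\in\equerre{j}}\cybis{k,l}$ the explicit product of inverse transvections given by Proposition~\ref{T:DecompositionMurasugiBis}$(iv)$, evaluating each factor $\tau\cybis{i,l}^{-1}$ by Lemma~\ref{T:DehnTwistLorenzBis}. The key is to split $h_D^{-1}$ into its three successive blocks --- first the twists along the column $\col=0$ (from top to bottom), then along the columns $\col=-1,-2,\dots,\col_l$, then along the columns $\col=1,2,\dots,\col_r$, each column again from top to bottom --- and to use two elementary observations: a twist along a curve of column $\col$ only changes the coordinate carried by that same column, and the change it produces there depends only on the six coordinates sitting in columns $\col-1,\col,\col+1$. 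Hence the ``negative'' block and the ``positive'' block act on disjoint sets of coordinates and are oblivious to one another, and by the left--right symmetry of $\equerre{j}$ and of the whole construction it is enough to analyse the central block and the positive block.

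Running the central block first: since $\sum_{(k,l)\in\equerre{j}}\cybis{k,l}$ has a single non-zero coordinate in column~$0$ (the apex) and two medial coordinates in columns~$\pm1$ (Definition~\ref{D:CelluleCentrale}), all twists of the central block act trivially until the level of the apex is reached; there, in two consecutive steps, the twist at the cell just above the apex creates a new coordinate --- fed by the two medial coordinates and by the old apex --- and the twist at the old apex then annihilates it, the fresh $+1$ cancelling exactly the two $-1$'s coming from the medial coordinates together with the self-term. No later twist of the central block fires. Thus after the central block the cycle has become the apex of $\equerre{j-1}$ plus the two arms of $\equerre{j}$, still unchanged.

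Running next the positive block on this cycle, one treats the columns $\col=1,2,\dots$ from the innermost one outwards and proves, by induction on $\col$, that the twists along column~$\col$ do exactly one useful thing: out of the already-shifted arm coordinate of column~$\col-1$ and the still unshifted arm coordinates of columns~$\col$ and~$\col+1$, the twist at the cell just above the old arm coordinate of column~$\col$ creates a new coordinate there, and the twist at the old arm coordinate then kills it; every other local correction is a sum $(+1)+(-1)=0$, produced by the shifted coordinate of column~$\col-1$ meeting the still unshifted coordinates of columns~$\col$ and~$\col+1$, the same cancellation mechanism already seen in the proof of Lemma~\ref{T:ImageExterne}. The single exceptional column is the outermost one of the arm, where column~$\col+1$ carries no relevant coordinate: there no new coordinate is created and the outermost arm coordinate is simply killed, and the columns of $D$ lying beyond the arm receive nothing. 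So the positive block sends the positive arm of $\equerre{j}$ to the positive arm of $\equerre{j-1}$, the negative block does the mirror image, and summing the three contributions gives exactly $\sum_{(k,l)\in\equerre{j-1}}\cybis{k,l}$.

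The one genuinely delicate point is the inductive claim of the third step --- that processing each column of an arm contributes precisely one creation and one cancellation, all remaining local corrections vanishing. As in the proof of Lemma~\ref{T:ImageExterne}, establishing it requires a short case distinction according to the position of the column being processed relative to the ``front'' of already-shifted coordinates, with ad hoc checks at the apex column, at the outermost column of each arm, and at the first column of $D$ strictly beyond an arm (which must receive nothing). The two structural observations of the first paragraph --- the three blocks touch pairwise disjoint columns, and the negative and positive blocks are mirror images of one another --- are precisely what keep this case analysis finite and short; they are also the reason why the analogous computation fails for the standard surface $\surfd$ and for $h_D$ rather than for the mixed surface $\seifbis$ and for $h_D^{-1}$.
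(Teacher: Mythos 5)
Your proof is correct but takes a genuinely different route from the paper's. The paper exploits the linearity of $h_D^{-1}$ to split the cycle $\sum_{(k,l)\in\equerre{j}}\cybis{k,l}$ into its peripheral part (columns $\lvert k\rvert\ge 2$) and its three central cells $(0,j)$, $(\pm1,j{-}1)$, then simply cites Lemma~\ref{T:ImagePeripherique} for the former and verifies by a short explicit chain that $h_D^{-1}\bigl(\cybis{0,j}+\cybis{-1,j{-}1}+\cybis{1,j{-}1}\bigr)=\cybis{0,j{-}2}$. You instead decompose the \emph{operator} $h_D^{-1}$ into its central, negative and positive column blocks and push the full cycle through all three, proving a per-column shift by induction. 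That is sound --- your two structural observations (each twist only touches its own column's coordinate and reads only the three neighbouring columns, so the negative and positive blocks act on disjoint target columns and both read only the post-central column~$0$; and the whole setup is left--right symmetric) are correct and suffice --- but it re-derives in the positive arm exactly what Lemma~\ref{T:ImagePeripherique} already gives cell by cell, so the paper's version is substantially shorter. Two small inaccuracies in your description, neither of which affects correctness: the phrase ``together with the self-term'' has no counterpart in Lemma~\ref{T:DehnTwistLorenzBis} (the coefficient $-1$ at $\tau\cybis{0,j}^{-1}$ is $0-0+0-1+1-1$ directly, with no extra term); and within each arm column the twists other than the two that fire are simply inert because every relevant coordinate is zero, rather than vanishing by the $(+1)+(-1)$ cancellation mechanism you borrow from Lemma~\ref{T:ImageExterne} --- that mechanism governs the medial and $(0,0)$ cases, not the peripheral shift.
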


\begin{proof}
Lemma~\ref{T:ImagePeripherique} describes the images of all cells of the try square~$\equerre{j}$ under~$h_D^{-1}$, except the cells~$(0,j), (-1, j{-}1)$ and~$(1, j{-}1)$. 
It is therefore sufficient to show the equality~$h_D^{-1}\big(\cybis{0,j}+\cybis{-1, j{-}1}+\cybis{1, j{-}1}\big) = \cybis{0,j{-}2}$. 
Using Proposition~\ref{T:DecompositionMurasugiBis}$(iv)$, and considering only the twists that modify the cycle we are considering, we obtain
\begin{eqnarray*}
& & \cybis{0,j} +\cybis{-1, j{-}1}+\cybis{1, j{-}1} \\
& \donnepar{\cybis{0,j{-}2}}& (\cybis{0,j}-\cybis{0,j{-}2})+(\cybis{-1, j{-}1}+\cybis{0,j{-}2})+(\cybis{1, j{-}1}+\cybis{0,j{-}2}) \\
& = & \cybis{0,j{-}2}+ \cybis{0,j}+(\cybis{-1, j{-}1} + \cybis{1, j{-}1} \\
& \donnepar{\cybis{0,j}}& (\cybis{0,j{-}2}+\cybis{0,j}) + \cybis{0,j} + \cybis{-1, j{-}1}-\cybis{0,j})+(\cybis{1, j{-}1}-\cybis{0,j}) \\
& = & \cybis{0,j{-}2} + \cybis{-1, j{-}1} + \cybis{1, j{-}1} \\
& \donnepar{\cybis{1,j{-}3}}& (\cybis{0,j{-}2} + \cybis{1,j{-}3}) + \cybis{-1, j{-}1} + (\cybis{1, j{-}1} - \cybis{1,j{-}3}) \\
& = & \cybis{0,j{-}2} + \cybis{-1, j{-}1} + \cybis{1, j{-}1} \\
& \donnepar{\cybis{1,j{-}1}}& (\cybis{0,j{-}2} - \cybis{1,j{-}3}) + \cybis{-1, j{-}1} + \cybis{1, j{-}1} = \cybis{0,j{-}2} + \cybis{-1, j{-}1} \\
& \donnepar{\cybis{-1,j{-}1}} & \cybis{0,j{-}2},
\end{eqnarray*}
as expected.
\end{proof}

Accessible rectangles also have an analog: accessible rays.

\begin{definition}
\label{D:Ray}
Let $(i, j)$ be the coordinates of a cell of the Young diagram. 
Then the \emph{left ray}~$\lray{i,j}$ is defined as the set of cells~$\left\{(k,l) ~\big\vert~ k \le i ~\mathrm{ and }~ k + l = i + j \right\}$, the \emph{right ray}~$\rray{i,j}$ is defined as the set~$\left\{ (k,l) ~\big\vert~ k \ge i ~\mathrm{ and }~ k - l = i - j \right\}$, and the \emph{vertical ray}~$\vray{i,j}$ as the set $\left\{ (k,l) ~\big\vert ~ l \ge j ~\mathrm{ and }~ k = i \right\}$.
The top and bottom cells of a ray are defined in the obvious way, and are denoted~$\topcell{\lray{i,j}}$ and~$\botcell{\lray{i,j}}$ respectively.
\end{definition}

\begin{lemma}
\label{T:ImageCentreDroit}
Let $(1,j)$ be a right medial cell of the diagram~$D$. Then we recursively define the accessible sets~$\acces_m(1,j)$ as follows:
\begin{itemize}
\item[($i$)] the set~$\acces_0(1,j)$ is the ray~$\lray{0,j-1}$;
\item[($ii$)] the set~$\acces_1(1,j)$ is the ray~$\vray{\topcell{\acces_K^0(1,j)}+(-1,1)}$;
\item[($iii$)] as long as $\botcell{\acces_{2m-1}(1,j)}+(-1,1)$ is in~$D$, we set~$\acces_{2m}(1,j) = \lray{\botcell{\acces_K^{2m-1}(1,j)}+(-1,1)}$, otherwise the construction stops;
\item[($iv$)] the set~$\acces_{2m+1}(1,j)$ is the ray~$\vray{\topcell{\acces_K^{2m}(1,j)}+(-1,1)}$.
\end{itemize}
Then we have
\begin{equation}
\label{Eq:ImageCentreDroit}
 h_D^{-1}(\cybis{1,j}) = \sum_{m\ge 0} \sum_{(k,l)\in\acces_m(1,j)} (-1)^m \cybis{k,l}. 
\end{equation}

We define accessible sets of right medial cells in the same way, and we have
\begin{equation}
\label{Eq:ImageCentreGauche}
h_D^{-1}(\cybis{-1,j}) = \sum_{m\ge 0} \sum_{(k,l)\in\acces_m(1,j)} (-1)^m \cybis{k,l}. 
\end{equation}

\end{lemma}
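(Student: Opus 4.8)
The proof runs parallel to that of Lemma~\ref{T:ImageExterne}, with the inverse monodromy $h_D^{-1}$ of the mixed surface in place of $h_D$, and accessible rays in place of accessible rectangles; I will only indicate the new features.

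First I would check that the right-hand side of~\eqref{Eq:ImageCentreDroit} makes sense: the recursive construction of the $\acces_m(1,j)$ stops after finitely many steps because $D$ has finitely many cells, and two consecutive rays $\acces_m(1,j)$, $\acces_{m+1}(1,j)$ are disjoint, each having its distinguished cell one step to the NW of an extremity of the previous one and then running to the boundary of $D$; hence $\sum_{m\ge 0}\sum_{(k,l)\in\acces_m(1,j)}(-1)^m\cybis{k,l}$ is a genuine element of $H_1(\seifbis;\Z)$. Then, exactly as in Lemma~\ref{T:ImageExterne}, I would introduce the truncated products of $h_D^{-1}$ obtained by keeping only the transvections of the columns already visited, in the order dictated by Proposition~\ref{T:DecompositionMurasugiBis}$(iv)$, and evaluate them on $\cybis{1,j}$ column by column, using Lemma~\ref{T:DehnTwistLorenzBis} together with the intersection numbers of Proposition~\ref{T:DecompositionMurasugiBis}$(iii)$. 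The accessible rays are designed precisely so that, once the columns up to a given one have been processed, the image of $\cybis{1,j}$ is the corresponding partial sum of the right-hand side of~\eqref{Eq:ImageCentreDroit}.

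The heart of the argument is the step linking two consecutive truncations: applying the transvections of one column either meets none of the cells currently carrying a nonzero coefficient and changes nothing, or turns a block of consecutive cycles spanning a ray $\acces_m(1,j)$ into the same block together with the next ray $\acces_{m+1}(1,j)$, carried by the neighbouring column, with the sign flipped to $(-1)^{m+1}$. This is the exact analogue of Subcases~1.1 and~1.2 in the proof of Lemma~\ref{T:ImageExterne} --- a telescoping sum computation in which ``continue the current ray'' versus ``start a new ray'' corresponds to whether the column being crossed does or does not reach the boundary of $D$ --- and the alternation ``left ray / vertical ray / left ray / $\dots$'' of Definition~\ref{D:Ray} merely records the alternation of the columns visited. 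The only genuinely new point is the start of the process, where the cell $(1,j)$ and the columns near it come into play; there one checks, by an elementary computation in the spirit of the first lines of the proof of Lemma~\ref{T:ImageEquerre}, that the transvection along $\cybis{1,j}$ itself cancels the $\cybis{1,j}$ term while the first block $\sum_{(k,l)\in\acces_0(1,j)}\cybis{k,l}$ appears with sign $(-1)^0$, after which the induction proceeds as above and yields~\eqref{Eq:ImageCentreDroit}. I expect the only real obstacle to be bookkeeping: keeping the alternating signs straight and matching the zigzag of rays to the order in which the columns are treated. Finally, \eqref{Eq:ImageCentreGauche} follows from~\eqref{Eq:ImageCentreDroit} by the left--right symmetry of the data --- the intersection numbers of Proposition~\ref{T:DecompositionMurasugiBis}$(iii)$ and the two-chain Murasugi order of Proposition~\ref{T:DecompositionMurasugiBis}$(ii)$ are invariant under $i\mapsto -i$ --- which carries the accessible rays of a right medial cell to those of the corresponding left medial cell.
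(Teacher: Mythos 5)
Your proposal is correct and follows exactly the route the paper intends: the author explicitly says ``The proof is a computation similar to that in the proof of Lemma~\ref{T:ImageExterne}. We skip it,'' and you have faithfully spelled out how the column-by-column truncation argument of Lemma~\ref{T:ImageExterne} transfers to $h_D^{-1}$ on the mixed surface, with accessible rays replacing accessible rectangles and the left--right symmetry giving~\eqref{Eq:ImageCentreGauche}.
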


The proof is a computation similar to that in the proof of Lemma~\ref{T:ImageExterne}. We skip it.

\begin{figure}[htbp]
	\begin{center}
	\includegraphics[width=0.5\textwidth]{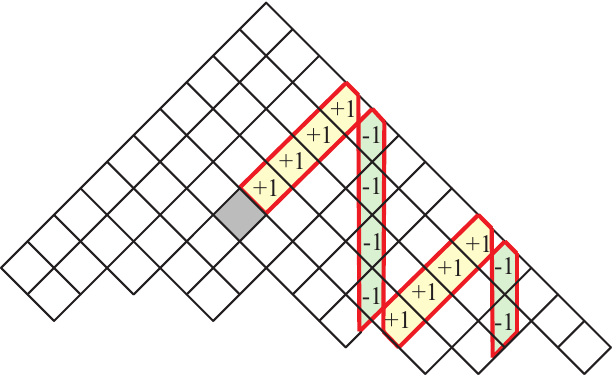}\includegraphics[width=0.5\textwidth]{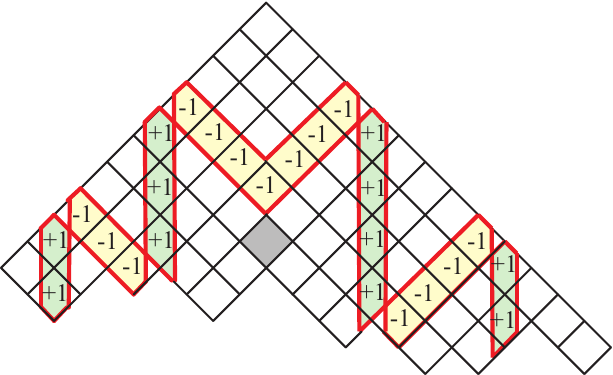}
	\end{center}
	\caption{\small On the left, the image of a left medial cell under the inverse of the monodromy. On the right, the image of a central cell.}
	\label{F:MonodromieCentreDroit}
\end{figure}

In the same vein, we have

\begin{lemma}
\label{T:ImageCentre}
Let $(0,j)$ be a central cell of~$D$ with $j>0$. 
For all $m\ge 1$, define the accessible set~$\acces_m(0,j)$ to be the union $\acces_m(1,j-1) \cup \acces_m(-1,j-1)$.
Then we have
\begin{equation}
\label{Eq:ImageCentre}
h_D^{-1}(\cybis{0,j}) = -\sum_{(k,l)\in\equerre{j-2}}\cybis{k,l} + \sum_{m\ge 1} \sum_{(k,l)\in\acces_K^m(0,j)} (-1)^{m+1} \cybis{k,l}. 
\end{equation}
\end{lemma}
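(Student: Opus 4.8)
The plan is to reduce everything to the three preceding lemmas by linearity, rather than redo a column-by-column Dehn twist computation. The starting point is that the central cycle $\cybis{0,j}$ is the apex of the try square $\equerre{j}$, so I would write
\[
\cybis{0,j}
= \sum_{(k,l)\in\equerre{j}}\cybis{k,l}
 \;-\; \cybis{1,j-1} \;-\; \cybis{-1,j-1}
 \;-\; \sum_{(k,l)\in\equerre{j},\,|k|\ge 2}\cybis{k,l},
\]
the last two groups being respectively the two medial cells of $\equerre{j}$ and its peripheral cells. (Since $(0,j)\in D$ one has $j\ge 2$, and the staircase shape of a Young diagram guarantees that $(\pm1,j-1)$ and all the cells of $\equerre{j}$ and of $\equerre{j-2}$ occurring below do lie in $D$.)

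Next I would apply $h_D^{-1}$ termwise. Lemma~\ref{T:ImageEquerre} gives $h_D^{-1}\bigl(\sum_{(k,l)\in\equerre{j}}\cybis{k,l}\bigr)=\sum_{(k,l)\in\equerre{j-2}}\cybis{k,l}$. By Lemma~\ref{T:ImagePeripherique}, each peripheral cell $(m,j-m)$ with $m\ge2$ (\resp $(-m,j-m)$) is sent to $\cybis{m-1,j-m-1}$ (\resp $\cybis{-m+1,j-m-1}$), and as $m$ ranges over $2,3,\dots$ these images are exactly the cells of $\equerre{j-2}$ other than its apex $(0,j-2)$; hence $h_D^{-1}\bigl(\sum_{(k,l)\in\equerre{j},\,|k|\ge 2}\cybis{k,l}\bigr)=\sum_{(k,l)\in\equerre{j-2}}\cybis{k,l}-\cybis{0,j-2}$. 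The two copies of $\sum_{(k,l)\in\equerre{j-2}}\cybis{k,l}$ cancel, leaving the key intermediate identity
\[
h_D^{-1}(\cybis{0,j}) = \cybis{0,j-2} \;-\; h_D^{-1}(\cybis{1,j-1}) \;-\; h_D^{-1}(\cybis{-1,j-1}).
\]

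It then remains to insert the expression of Lemma~\ref{T:ImageCentreDroit} (and its left-medial analogue) for $h_D^{-1}(\cybis{1,j-1})$ and $h_D^{-1}(\cybis{-1,j-1})$ and to recognise the result. Splitting off the $m=0$ terms, the $m\ge1$ parts of $-h_D^{-1}(\cybis{1,j-1})-h_D^{-1}(\cybis{-1,j-1})$ combine, via the definition $\acces_m(0,j)=\acces_m(1,j-1)\cup\acces_m(-1,j-1)$, into $\sum_{m\ge1}\sum_{(k,l)\in\acces_m(0,j)}(-1)^{m+1}\cybis{k,l}$ --- the second term of the claimed formula --- while the $m=0$ contributions, together with the leftover $\cybis{0,j-2}$, must assemble into $-\sum_{(k,l)\in\equerre{j-2}}\cybis{k,l}$. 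The hard part is this last reconciliation: one must verify that for $m\ge1$ the accessible rays $\acces_m(1,j-1)$ and $\acces_m(-1,j-1)$ are disjoint, so that no cell of $\equerre{j-2}$ is over- or under-counted in the union, and that the initial rays $\acces_0(\pm1,j-1)$ near the central column, together with $\cybis{0,j-2}$, cover $\equerre{j-2}$ exactly once, treating separately the degenerate situations in which a ray is truncated by the boundary of $D$ or the recursion defining the accessible sets terminates early. This bookkeeping is of the same routine-but-tedious kind as in the proof of Lemma~\ref{T:ImageExterne}; alternatively, one could bypass it by a direct truncated-product induction on $h_D^{-1}$, analogous to that proof but carried out for the mixed decomposition of Proposition~\ref{T:DecompositionMurasugiBis}.
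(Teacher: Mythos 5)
Your route is a genuinely different and arguably cleaner one than the paper's. The author leaves this lemma with ``in the same vein,'' meaning a fresh truncated-product computation in the style of Lemma~\ref{T:ImageExterne} carried out for the mixed decomposition; you instead reduce everything to Lemmas~\ref{T:ImageEquerre}, \ref{T:ImagePeripherique} and~\ref{T:ImageCentreDroit} by linearity, which reuses work already done and avoids another page of Dehn-twist bookkeeping. That reduction is sound, and your intermediate identity $h_D^{-1}(\cybis{0,j}) = \cybis{0,j-2} - h_D^{-1}(\cybis{1,j-1}) - h_D^{-1}(\cybis{-1,j-1})$ is in fact exactly the displayed sub-equality $h_D^{-1}\bigl(\cybis{0,j}+\cybis{-1,j-1}+\cybis{1,j-1}\bigr)=\cybis{0,j-2}$ isolated inside the proof of Lemma~\ref{T:ImageEquerre}; you could cite that directly and skip your whole peel-off-the-peripherals detour.

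Where the proposal falls short is precisely the step that carries the content of the lemma: you write ``must assemble into'' for the $m=0$ reconciliation and defer it as tedious bookkeeping on a par with Lemma~\ref{T:ImageExterne}. That leaves a genuine gap, and it also misreads the difficulty --- the reconciliation is a one-line geometric observation, not a new induction. By construction, $\acces_0(1,j-1)$ is the ray through $(0,j-2)$ that heads up and to the left keeping $l-k$ constant, i.e.\ the set $\{(0,j-2),(-1,j-3),\dots,(-(j-2)/2,(j-2)/2)\}$; this is \emph{exactly} the left arm of the try square $\equerre{j-2}$, it terminates on the top edge $l=|k|$ of~$D$ (so is never truncated, contrary to your worry), and by symmetry $\acces_0(-1,j-1)$ is the right arm. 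The two arms meet only at the apex $(0,j-2)$, whence
\[
\cybis{0,j-2} - \sum_{(k,l)\in\acces_0(1,j-1)}\cybis{k,l} - \sum_{(k,l)\in\acces_0(-1,j-1)}\cybis{k,l} \;=\; -\sum_{(k,l)\in\equerre{j-2}}\cybis{k,l},
\]
which is the first term of~\eqref{Eq:ImageCentre}. For $m\ge1$, the recursion places $\acces_m(1,j-1)$ entirely in columns $\le -j/2$ and $\acces_m(-1,j-1)$ in columns $\ge j/2$, so the two families are disjoint and the union defining $\acces_m(0,j)$ is unambiguous. (One pitfall you implicitly stepped over: Definition~\ref{D:Ray} as printed states the constraints of $\lray{i,j}$ and $\rray{i,j}$ with the anti-diagonal and diagonal swapped; the expansion used in the proof of Lemma~\ref{T:DoubleImageCentreDroit} and the arrow decorations make clear that $\lray{i,j}$ keeps $l-k$ constant and heads up-left, which is the reading you need for the two $m=0$ rays to coincide with the arms of the try square at all.)
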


The key point, which has no counterpart in the case of the standard Seifert surface, is as follows. We recall that $n$ stands for the number of cells of the diagram~$D$, and that $b_{-j/2}$ is the vertical coordinate of the bottom cell of the column with abscissa~$-j/2$.

\begin{lemma}
\label{T:DoubleImageCentreDroit}
Let $(1,j)$ be a right medial cell of the diagram~$D$. Then the cycle~$h_D^{-2}(\cybis{1,j})$ is the sum of at most $n$ elementary cycles~$\cybis{k,l}$ all satisfying~$k+l \le -j/2 - b_{-j/2}$.
\end{lemma}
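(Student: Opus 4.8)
The plan is to iterate the description of $h_D^{-1}$ on a right medial cell given by Lemma~\ref{T:ImageCentreDroit}. Applying $h_D^{-1}$ once, we have $h_D^{-1}(\cybis{1,j}) = \sum_{m\ge 0}\sum_{(k,l)\in\acces_m(1,j)}(-1)^m\cybis{k,l}$, where the accessible sets $\acces_m(1,j)$ alternate between left rays $\lray{\cdot}$ and vertical rays $\vray{\cdot}$, climbing up and to the left through the diagram; in particular the cell $(0,j-1)=\topcell{\acces_0(1,j)}$ occurs. The key observation is that $\acces_0(1,j)=\lray{0,j-1}$ consists of cells $(k,l)$ with $k\le 0$ and $k+l=j-1$, so these are left medial cells (when $k=-1$) and left peripheral cells (when $k<-1$) — together with the single central cell $(0,j-1)$. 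So I would first sort the terms of $h_D^{-1}(\cybis{1,j})$ according to the type (in the sense of Definition~\ref{D:CelluleCentrale}) of the cell involved.

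Next I would apply $h_D^{-1}$ a second time, treating each type separately using the lemmas already proved. The peripheral cells $(k,l)$ appearing (those with $|k|>1$, which make up all of $\acces_0$ except its top two cells, plus all of $\acces_1,\acces_2,\dots$ except their boundary cells) are sent by $h_D^{-1}$ to a single adjacent cell via Lemma~\ref{T:ImagePeripherique}, hence contribute at most one elementary cycle each and do not increase the count beyond $n$. The left medial cell $(-1,j-2)\in\acces_0(1,j)$ is handled by the left-hand analog of Lemma~\ref{T:ImageCentreDroit} (equation~\eqref{Eq:ImageCentreGauche}), and the central cell $(0,j-1)$ — if present — is handled by Lemma~\ref{T:ImageCentre}. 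The point of the argument is that \emph{all} the output cells of $h_D^{-2}(\cybis{1,j})$, no matter which branch they come from, satisfy the uniform inequality $k+l\le -j/2 - b_{-j/2}$: the left medial and central contributions, after one more application of $h_D^{-1}$, land in left rays and vertical rays that have been pushed two anti-diagonals down-left, and one tracks the anti-diagonal coordinate $k+l$ through the construction, using that $\acces_0(1,j)=\lray{0,j-1}$ starts on anti-diagonal $j-1$ and that each subsequent step in Definition of the accessible sets strictly decreases this coordinate by at least $2$, reaching the column $-j/2$ (whose relevant cells have $k+l = -j/2 + b_{-j/2}$ at worst, or even lower) — I would verify the precise arithmetic that the final anti-diagonal attained is bounded above by $-j/2 - b_{-j/2}$.

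The bound on the number of elementary cycles is then immediate: each elementary cycle in $h_D^{-1}(\cybis{1,j})$ lies in a distinct cell, and applying $h_D^{-1}$ again, peripheral cells produce one cell each while the at most two non-peripheral cells (one left medial, one central) produce subsets of the diagram; since the supports are pairwise disjoint rays/try-squares sitting in distinct columns and anti-diagonals, and since everything lives inside the $n$-cell diagram $D$, the total is at most $n$ elementary cycles (with possible cancellations only helping).

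The main obstacle I expect is the bookkeeping of the inequality $k+l\le -j/2-b_{-j/2}$: one must carefully chase the recursive definition of the accessible sets $\acces_m$ (and their left-medial and central analogs) through two applications of $h_D^{-1}$, keeping track of how the "anchor" cells $\botcell{\acces_{2m-1}}+(-1,1)$ and $\topcell{\acces_{2m}}+(-1,1)$ shift the anti-diagonal coordinate, and check that even the worst-case branch — the one that climbs least steeply to the left — still reaches column $-j/2$ with anti-diagonal coordinate no larger than $-j/2 - b_{-j/2}$. This is exactly the "tricky adapted basis" phenomenon flagged in the introduction: the geometric content is that a second iterate of the monodromy drags a right medial cycle all the way across to the far left of the diagram, below the bottom of column $-j/2$, which is what ultimately gives the uniform lower bound on the "time of first return to a peripheral cell" needed in Section~\ref{S:Radius}.
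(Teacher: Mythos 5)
Your setup is right---you correctly identify that $\acces_0(1,j)=\lray{0,j-1}$ contains one central cell $(0,j-1)$, one left medial cell $(-1,j-2)$, and the rest left peripheral, and that one should apply $h_D^{-1}$ a second time sorting by type and invoking Lemmas~\ref{T:ImagePeripherique}, \ref{T:ImageCentreDroit} and \ref{T:ImageCentre}. But the central claim on which your argument rests, namely that ``the left medial and central contributions, after one more application of $h_D^{-1}$, land in left rays and vertical rays that have been pushed two anti-diagonals down-left'' and that ``the supports are pairwise disjoint,'' is false, and this is precisely where the real work of the lemma lies.

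Look at what Lemma~\ref{T:ImageCentre} actually gives: $h_D^{-1}(\cybis{0,j-1})$ contains the full try square $-\sum_{(k,l)\in\equerre{j-3}}\cybis{k,l}$, which has cells with arbitrarily large positive anti-diagonal coordinate $k+l$ (the cell $((j-3)/2,(j-3)/2)$, for instance), together with accessible sets $\acces_m(0,j-1)$ that by definition spread into \emph{both} the left and the right halves of the diagram. None of these right-hand cells satisfies $k+l\le -j/2-b_{-j/2}$, which is $\le -j < 0$. Likewise the supports of $h_D^{-1}(\cybis{0,j-1})$ and $h_D^{-1}(\cybis{-1,j-2})$ are very much \emph{not} disjoint. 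The conclusion of the lemma is only true because these two contributions cancel: the right-column part of $h_D^{-1}(\cybis{0,j-1})$ is exactly $-h_D^{-1}(\cybis{-1,j-2})$, so their sum vanishes; and, in a second round, the surviving left part of $\acces_0(1,j-2)$ from the central cell cancels against $\sum_{k\le -2} h_D^{-1}(\cybis{k,l}) = \sum \cybis{k+1,l-1}$ coming from the peripheral cells of $\acces_0(1,j)$. Only after these two cancellations does the result collapse to terms indexed by $\acces_m(1,j-2)$ with $m\ge 2$ (and their shifts), which \emph{do} lie in the far bottom-left. Without observing this cancellation, your inequality $k+l\le -j/2-b_{-j/2}$ is simply wrong, and the ``everything lives inside the $n$-cell diagram so the count is at most $n$'' argument does not repair it, since a priori the coefficients could exceed $\pm 1$ where the supports overlap. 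You need to explicitly carry out the cancellation, not just bookkeep the anti-diagonals of separate rays.
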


\begin{proof}
Using~\eqref{Eq:ImageCentreDroit}, we obtain
\begin{multline*}
h_D^{-1}(\cybis{1,j}) = \cybis{0,j-1} + \cybis{-1,j-2} +  \sum_{(k,l)\in\acces_K^0(-1,j), k\le -2} \cybis{k,l} \\
 - \sum_{(k,l)\in\acces_K^1(-1,j)} \cybis{k,l} + \sum_{m\ge 2} \sum_{(k,l)\in\acces_K^m(-1,j)} (-1)^m \cybis{k,l}.
\end{multline*}
Comparing~\eqref{Eq:ImageCentreDroit} and~\eqref{Eq:ImageCentre}, and looking at Figure~\ref{F:MonodromieCentreDroit}, one sees that the part of~$h_D^{-1}(\cybis{0,j{-}1})$ in the right columns coincides with~$-h_D^{-1}(\cybis{-1,j{-}2})$. 
Hence both contributions vanish, and we find
\begin{eqnarray*}
h_D^{-2}(\cybis{1,j}) & = & h_D^{-1}(\cybis{0,j{-}1} + \cybis{-1,j{-}2}) +  \sum_{(k,l)\in\acces_K^0(1,j), k\ge 2} h_D^{-1}(\cybis{k,l}) 
\\
& & - \sum_{(k,l)\in\acces_K^1(1,j)} h_D^{-1}(\cybis{k,l}) + \sum_{m\ge 2} \sum_{(k,l)\in\acces_K^m(1,j)} (-1)^m h_D^{-1}(\cybis{k,l}) 
\\
& = & - \sum_{(k,l)\in\acces_K^0(1,j{-}2),k \le -1} \cybis{k,l} + \sum_{(k,l)\in\acces_K^1(1,j{-}2)} \cybis{k,l} 
\\ 
& & + \sum_{m\ge 2} \sum_{(k,l)\in\acces_K^m(1,j{-}2)} (-1)^{m+1} \cybis{k,l}
+ \sum_{(k,l)\in\acces_K^0(1,j), k \le -2} h_D^{-1}(\cybis{k,l}) (\cybis{k,l}) 
\\
& & -\sum_{(k,l)\in\acces_K^1(1,j)} h_D^{-1} + \sum_{m\ge 2} \sum_{(k,l)\in\acces_K^m(1,j)} (-1)^m h_D^{-1}(\cybis{k,l}).
\end{eqnarray*}

Since we have $h_D^{-1}(\cybis{k,l}) = \cybis{k+1, l{-}1}$ for every $k\le -2$, the first two terms in the parenthesis vanish when added to the first one outside, whence
\begin{equation*}
h_D^{-2}(\cybis{1,j}) = 
\sum_{m\ge 2} \sum_{(k,l)\in\acces_K^m(1,j-2)} (-1)^{m+1} \cybis{k,l} + \sum_{m\ge 2} \sum_{(k,l)\in\acces_K^m(1,j)} (-1)^m \cybis{k{+}1,l{-}1}.
\end{equation*}

Depending on whether the iterative constructions of the sets~$\acces^m(1,j{-}2)$ and~$\acces_m(1,j)$ stop or not, some other terms might vanish. In all cases, at most $n$ cycles~$\cybis{k, l}$ remain, all of them lying in the part at the bottom left of~$\acces_m(1,j{-}2)$, and therefore satisfying~$k+l \le -j/2 - b_{-j/2}$. This completes the proof.
\end{proof}

There is one cell whose image has not yet been determined, namely the cell~$(0,0)$. It is the subject of the last lemma of this section.

\begin{lemma}
\label{T:ImageZeroZero}
(See Figure~\ref{F:MonodromieZeroZero}.)
We recursively define accessible sets~$\accesg_m(0,0)$ and~$\accesd_m(0,0)$ as follows\:
\begin{itemize}
\item[($i$)] we put $\accesg_0(0,0) = \accesd_0(0,0) = \vray{0,0}$;
\item[($ii$)] ~$\botcell{\accesg_{2m-2}(0,0)}+(-1,1)$ is a cell of~$D$, we set $\accesg_{2m-1}(0,0) = \lray{\botcell{\accesg_{2m-2}(0,0)}+(-1,1)}$, otherwise the construction stops;
\item[($iii$)] we set $\accesg_{2m}(0,0) = \vray{\topcell{\accesg_{2m-1}(0,0)}+(-1,1)}$;
\item[($iv$)]  similarly, while~$\botcell{\accesd_{2m-2}(0,0)}+(1,1)$ is in~$D$, we set $\accesd_{2m-1}(0,0) = \rray{\botcell{\accesd_{2m-2}(0,0)}+(1,1)}$, otherwise the construction stops;
\item[($v$)]  we set $\accesd_{2m}(0,0) = \vray{\topcell{\accesd_{2m-1}(0,0)}+(1,1)}$.
\end{itemize}
Then we have
\begin{equation*}
 h_D^{-1}(\cybis{0,0}) = \sum_{(k,l)\in\accesg_0(0,0)} \cybis{k,l} + \sum_{m\ge 1} \sum_{(k,l)\in\accesg_m(0,0)} (-1)^m \cybis{k,l} 
+ \sum_{m\ge 1} \sum_{(k,l)\in\accesd_m(0,0)} (-1)^m \cybis{k,l}, 
\end{equation*}
and
\begin{eqnarray*} h_D^{-2}(\cybis{0,0}) = - \sum_{(k,l)\in\equerre{b_0}} \cybis{k,l} &+& \sum_{m\ge 2} \sum_{(k,l)\in\accesg_m(0,0)} (-1)^m \cybis{k{+}1,l{-}1} \\
&+& \sum_{m\ge 2} \sum_{(k,l)\in\accesd_m(0,0)} (-1)^m \cybis{k{-}1,l{-}1}. \end{eqnarray*}
\end{lemma}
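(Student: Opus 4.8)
The plan is to prove the two displayed identities by explicit computation, in the same spirit as the proofs of Lemmas~\ref{T:ImageExterne}, \ref{T:ImageCentreDroit} and~\ref{T:DoubleImageCentreDroit}. The only genuinely new feature is that the cell~$(0,0)$ is simultaneously the apex of the left part and of the right part of the diagram, so its image has a \emph{left branch} and a \emph{right branch} that must be followed in parallel, whereas every previous cell had a single branch.

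\textbf{Step 1: computation of $h_D^{-1}(\cybis{0,0})$.} I would use the factorisation of~$h_D^{-1}$ from Proposition~\ref{T:DecompositionMurasugiBis}$(iv)$ and apply the inverse transvections in the prescribed order --- first along the right columns $\col=1,\dots,\col_r$, then along the left columns $\col=-1,\dots,\col_l$, then along the central column~$0$ --- reading off at each step, via Lemma~\ref{T:DehnTwistLorenzBis} and the intersection table of Proposition~\ref{T:DecompositionMurasugiBis}$(iii)$, the coefficient with which each elementary cycle $\cybis{k,l}$ is created. After the first twist the right branch lives only in columns $\ge 1$ and the left branch only in columns $\le -1$, so the two branches never interact and the bookkeeping of Lemma~\ref{T:ImageCentreDroit} transposes directly: the right branch yields the zigzag of vertical and right rays $\accesd_1,\accesd_2,\dots$ with alternating signs $(-1)^m$, the left branch yields the zigzag of vertical and left rays $\accesg_1,\accesg_2,\dots$, and the vertical ray $\vray{0,0}=\accesg_0=\accesd_0$ of central cells is produced during the central-column phase with coefficient $+1$. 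Matching this with the recursive definition of $\accesg_m,\accesd_m$ in the statement gives the first identity.

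\textbf{Step 2: computation of $h_D^{-2}(\cybis{0,0})$.} I would apply $h_D^{-1}$ term by term to the expression obtained in Step~1. Cells lying strictly inside a ray are peripheral, so Lemma~\ref{T:ImagePeripherique} applies and each is merely shifted by $(-1,-1)$ on the right branch and by $(1,-1)$ on the left branch; this already accounts for the contributions $\sum_{m\ge2}(-1)^m\sum_{\accesd_m}\cybis{k-1,l-1}$ and $\sum_{m\ge2}(-1)^m\sum_{\accesg_m}\cybis{k+1,l-1}$. The delicate part is the remaining cells: the central cells forming $\accesg_0=\vray{0,0}$, and the single medial cell at the extremity of $\accesg_1$ and of $\accesd_1$. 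For the central cells I would substitute Lemma~\ref{T:ImageCentre} (and, for $(0,0)$ itself, the first identity of the present lemma): the try squares then telescope, leaving one try square $\equerre{b_0}$ with coefficient $-1$ together with accessible sets attached to medial cells. The near-diagonal parts of the latter are exactly cancelled by $-h_D^{-1}$ applied to the medial cells produced by $\accesg_1$ and $\accesd_1$ --- precisely the cancellation performed at the start of the proof of Lemma~\ref{T:DoubleImageCentreDroit}, using $h_D^{-1}(\cybis{k,l})=\cybis{k+1,l-1}$ for $k\le-2$ and its mirror for $k\ge2$ --- and the deep remainders reassemble into the shifted rays $\accesg_m,\accesd_m$, $m\ge 2$. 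Collecting what survives yields the second identity.

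\textbf{Expected main obstacle.} The hard part is this final cancellation: one must line up three families of recursively defined sets --- the images of $\accesg_\bullet(0,0)$ and $\accesd_\bullet(0,0)$ after one further twist, the accessible sets $\acces_\bullet(\pm 1,\cdot)$ attached to the medial cells, and the try squares $\equerre{\cdot}$ attached to the central cells --- and verify that their indices match so that the alternating signs produce an exact telescoping, with the single try square $\equerre{b_0}$ (indexed by the bottom central cell) as the only diagonal residue. This is a finite but intricate index chase; the zigzag of Figure~\ref{F:MonodromieZeroZero} is the right guide. Everything else --- the applications of Lemmas~\ref{T:DehnTwistLorenzBis}, \ref{T:ImagePeripherique}, \ref{T:ImageEquerre}, \ref{T:ImageCentreDroit} and~\ref{T:ImageCentre} --- is routine and parallels the computations already carried out in Sections~\ref{S:FirstPass} and~\ref{S:SecondPass}.
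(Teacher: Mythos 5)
Your outline follows exactly the route the paper has in mind: the paper dismisses the proof of this lemma with the single sentence ``Once again the proof is a computation similar to the proof of Lemma~\ref{T:ImageExterne},'' and your Step~1 (twist-by-twist along columns, two non-interacting branches) is precisely that computation, while your Step~2 mirrors the method of Lemma~\ref{T:DoubleImageCentreDroit}, namely substituting the one-step formulas (Lemmas~\ref{T:ImagePeripherique}, \ref{T:ImageCentreDroit}, \ref{T:ImageCentre}, and the first identity for $(0,0)$ itself) term by term and cancelling. One caveat: the phrase ``the try squares then telescope'' is loose --- Lemma~\ref{T:ImageCentre} applied to the cells $(0,2),\dots,(0,b_0)$ of $\accesg_0=\vray{0,0}$ produces $-\sum_{\equerre{j'}}$ for $j'=0,2,\dots,b_0-2$, all with the same sign, so there is no literal telescoping among them; rather, these negative try squares, together with the accessible-set terms of Lemma~\ref{T:ImageCentre}, the $m=1$ medial contributions from $\accesg_1,\accesd_1$, and the peripheral shifts, must all be combined so that the cells in the region above $\equerre{b_0}$ cancel and only $-\sum_{\equerre{b_0}}$ and the shifted $m\ge 2$ rays survive. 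You flag this as the main obstacle and correctly describe the bookkeeping tools needed, which is the honest state of affairs; the paper offers no more than this sketch either, so your proposal is a fair (and more explicit) rendering of the paper's intended argument.
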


Once again the proof is a computation similar to the proof of Lemma~\ref{T:ImageExterne}.

\begin{figure}[htbp]
	\begin{center}
	\includegraphics[width=0.5\textwidth]{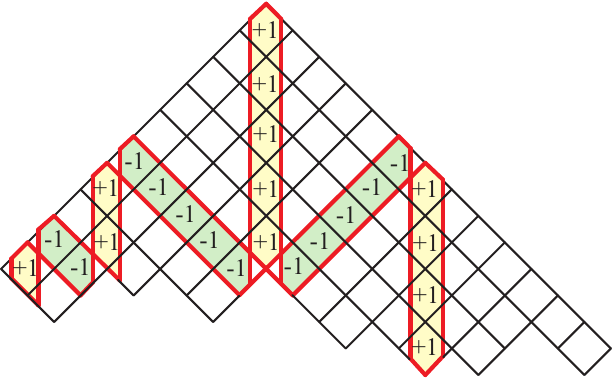}\includegraphics[width=0.5\textwidth]{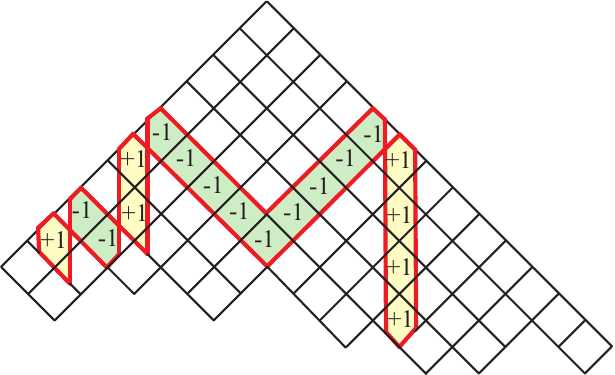}
	\end{center}
	\caption{\small On the left, the image of the cycle~$\cybis{0, 0}$ under the inverse of the monodromy. On the right, its image under the square of the inverse.}
	\label{F:MonodromieZeroZero}
\end{figure}

\section{The spectral radius of the monodromy}
\label{S:Radius}

In this final section, we use the results of Section~\ref{S:SecondPass} to establish a bound on the~$\ell^1$-norm of the inverse of the monodromy. We then deduce bounds for the eigenvalues of the monodromy and, from there, on the zeroes of the Alexander polynomial. We then illustrate the result with a few examples and conclude with questions.

\subsection{Proof of the main result}

We now use the analysis of Section~\ref{S:SecondPass} to precisely describe the iteration of the monodromy on the various cycles according to their position in the Young diagram. It turns out that finitely many patterns only can appear.

\begin{definition}
\label{D:TypeCellules}
Let~$D$ be a Young diagram with $n$ cells, and $\seifbis$ be the associated mixed Seifert surface. We recall that the central column has~$b_0+1$ cells. Then the cycle~$\cybis{i, j}$ associated with the cell with coordinates~$(i, j)$ is said to be of type
\begin{equation*}
\begin{cases}
I_\alpha \mbox{ ({\it resp.\ }$I_\beta$)} &\mbox{if the cell $(i, j)$ is central with $j \le b_0/2$ ({\it resp.\ }$j > b_0/2$),}
\\
I\!I_\alpha \mbox{ ({\it resp.\ }$I\!I_\beta$)} &\mbox{if the cell $(i, j)$ is medial (left or right) with $j \le b_0/2$ ({\it resp.\ }$j > b_0/2$),}
\\
I\!I\!I &\mbox{if we have $\vert i \vert > b_0/4$,}
\\
IV &\mbox{if we have $\vert i \vert \le b_0/4$ and $j - \vert i \vert > b_0/2$,}
\\
X &\mbox{otherwise.}
\end{cases}
\end{equation*}
A cycle associated to a try square~$\equerre{j}$ is said to be of type 

$V_\alpha$ ({\it resp.\ }$V_\beta$) \qquad if $j \le b_0/2$ ({\it resp.\ }$j > b_0/2$).
\end{definition}

\begin{lemma}
\label{T:Base}
Let $D$ be a Young diagram, and~$\seifbis$ be the associated mixed Seifert surface. Then the cycles of type $I\!I_\alpha, I\!I_\beta, I\!I\!I, IV, X, V_\alpha$ and $V_\beta$ form a basis of~$H_1(\seifbis; \Z)$.
\end{lemma}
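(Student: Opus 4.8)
The plan is to exhibit the proposed family as the image of the standard basis $\{\cybis{i,j}\}_{(i,j)\in D}$ of $H_1(\seifbis;\Z)$ (Proposition~\ref{T:DecompositionMurasugiBis}$(iii)$) under a unimodular change of basis, so that it is again a $\Z$-basis. There is no geometry left to do here; only a triangular change of coordinates has to be checked.

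First I would unwind Definition~\ref{D:TypeCellules} to see exactly what the family is. By Definition~\ref{D:CelluleCentrale} a cell of $D$ is central ($i=0$), medial ($i=\pm1$) or peripheral ($|i|\ge 2$); the medial cells are precisely those of type $I\!I_\alpha$ or $I\!I_\beta$, and each peripheral cell falls into exactly one of the mutually exclusive and exhaustive types $I\!I\!I$ (if $|i|>b_0/4$), $IV$ (if $|i|\le b_0/4$ and $j-|i|>b_0/2$), $X$ (otherwise). Hence the cycles of type $I\!I_\alpha, I\!I_\beta, I\!I\!I, IV, X$ are exactly the standard generators $\cybis{i,j}$ attached to the non-central cells of $D$. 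The cycles of types $V_\alpha, V_\beta$ are the try-square cycles $\sum_{(k,l)\in\equerre{j}}\cybis{k,l}$, one for each central cell $(0,j)$ of $D$ (Definitions~\ref{D:Equerre} and~\ref{D:TypeCellules}). Since the number of try squares equals the number of central cells, the whole family has exactly $n$ elements, matching the rank of $H_1(\seifbis;\Z)$.

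Next I would read off the change of basis. For a central cell $(0,j)$, the try square $\equerre{j}=\{(0,j),(-1,j{-}1),(1,j{-}1),\ldots,(-j/2,j/2),(j/2,j/2)\}$ lies entirely in $D$ and contains $(0,j)$ as its only cell of the $0$th column (Definition~\ref{D:Equerre}); hence, in the standard basis,
\[
\sum_{(k,l)\in\equerre{j}}\cybis{k,l}=\cybis{0,j}+\sum_{\substack{(k,l)\in\equerre{j}\\ k\neq0}}\cybis{k,l},
\]
so this cycle has coefficient $1$ on $\cybis{0,j}$, coefficient $0$ on $\cybis{0,j'}$ for every other central cell, and coefficients in $\{0,1\}$ on the non-central generators. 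Ordering the standard basis so that the non-central cells come first and the central cells last, and ordering the new family correspondingly (non-central cell cycles first, try-square cycles last), the matrix expressing the new family in the standard basis has the block form $\left(\begin{smallmatrix}\mathrm{Id}&A\\0&\mathrm{Id}\end{smallmatrix}\right)$: the top-left block is the identity because the non-central cell cycles are standard generators, the bottom-left block vanishes because those cycles have no central component, and the bottom-right block is the identity by the displayed formula. This matrix lies in $\mathrm{GL}_n(\Z)$, so the new family is a $\Z$-basis of $H_1(\seifbis;\Z)$.

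I do not expect a genuine obstacle here. The only points requiring care are the bookkeeping ones used above — that the five non-central types partition the non-central cells, and that every try square is really contained in $D$ — and both are immediate from Definitions~\ref{D:CelluleCentrale}, \ref{D:Equerre} and~\ref{D:TypeCellules}; the content of the lemma is exactly that replacing each central generator $\cybis{0,j}$ by the corresponding try-square cycle is a triangular, determinant-one operation.
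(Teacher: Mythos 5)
Your proposal is correct and follows exactly the route the paper takes: start from the standard basis $\{\cybis{i,j}\}_{(i,j)\in D}$, observe that the try-square cycle attached to a central cell $(0,j)$ expands in that basis with coefficient $1$ on $\cybis{0,j}$ and $0$ on every other central generator, and conclude that the substitution is unimodular. In fact you are slightly more careful than the paper's own one-line proof, which says the try square is the sum of $(0,j)$ and cells of types $I\!I\!I$ and $X$ but tacitly omits the medial cells $(\pm1,j-1)$ (and, for tall central columns, possible type-$IV$ cells); your observation that the only relevant fact is that $(0,j)$ is the unique $0$th-column cell of $\equerre{j}$ is the right way to see why that inaccuracy is harmless.
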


\begin{proof}
Owing to Proposition~\ref{T:DecompositionMurasugiBis}, the cycles of type $I_\alpha, I_\beta, I\!I_\alpha, I\!I_\beta, I\!I\!I, IV$ and~$X$ form a basis of~$H_1(\seifbis; \Z)$. Since a try square~$\equerre{j}$ is the sum of the cell~$(0,j)$ and of several cells of types~$I\!I\!I$ and~$X$, we keep a basis when replacing the cycle~$\cybis{0,j}$ by~$\sum_{(k,l)\in\equerre{j}}\cybis{k,l}$ for every~$j$.
\end{proof}

\begin{figure}[htbp]
	\begin{center}
	\begin{picture}(340,210)
	\put(0,0){\includegraphics[width=0.8\textwidth]{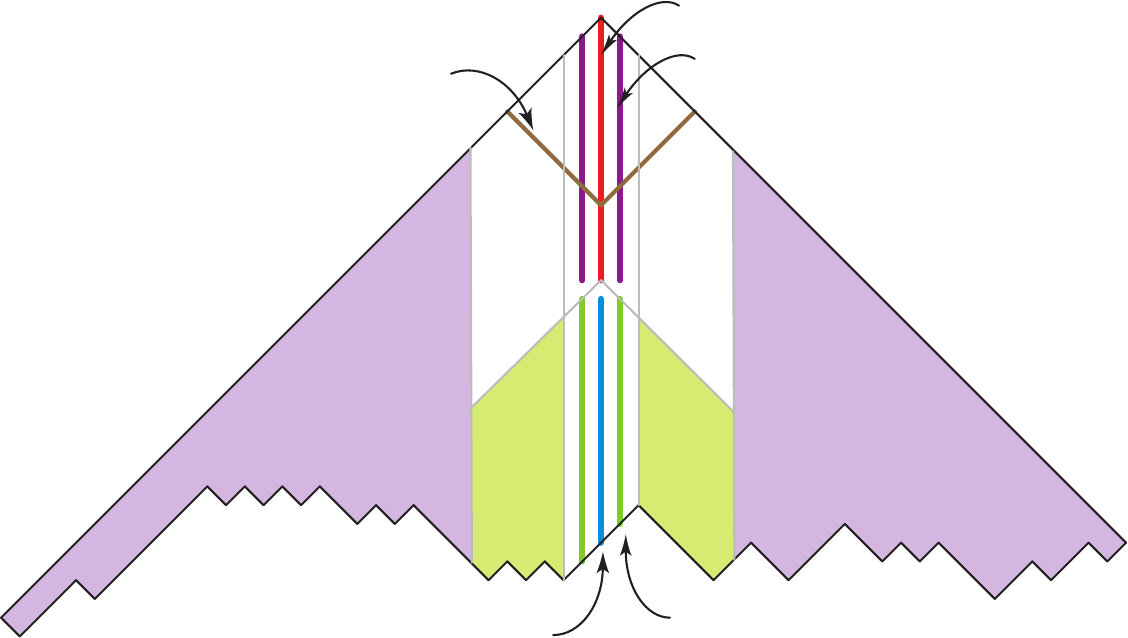}}
	\put(217,200){$I_\alpha$}
	\put(223,184){$I\!I_\alpha$}
	\put(163,-1){$I_\beta$}
	\put(214,2){$I\!I_\beta$}
	\put(131,177){$V_\alpha$}
	\put(100,70){$I\!I\!I$}
	\put(260,70){$I\!I\!I$}
	\put(160,60){$IV$}
	\put(161,120){$X$}
	\put(211,60){$IV$}
	\put(213,120){$X$}
	\end{picture}
	\end{center}
	\caption{\small Cell types in a Young diagram.}
	\label{F:TypeCellules}
\end{figure}

We now collect all information on different types of cells.

\begin{lemma}
\label{T:CroissanceBase}
(See Figure~\ref{F:CheminsCellules}.)
Let $D$ be a Young diagram, $K_D$ be the associated Lorenz knot, $\seifbis$ be the associated mixed Seifert surface, and~$h_D$ be the homological monodromy of~$K_D$. Let $c$ be a basic cycle of~$H_1(\seifbis; \Z)$. Then for $c$ of type
\begin{equation*}
\begin{cases}
I\!I\!I &\mbox{there exists $t_{c} \ge b_0/4$ such that $h_D^{-t_{c}}(c)$ is of type $I\!I_\alpha$ or~$I\!I_\beta$;}
\\
IV &\mbox{there exists $t_{c} \le b_0/4$ such that $h_D^{-t_{c}}([c])$ is of type~$I\!I_\beta$;}
\\
V_\alpha &\mbox{there exists $t_{c} \le b_0/4$ such that $h_D^{-t_{c}}([c])$ is the cycle~$\cybis{0,0}$;}
\\
V_\beta &\mbox{there exists  $t_{c} \ge b_0/4$ such that $h_D^{-t_{c}}([c])$ is the cycle~$\cybis{0,0}$;}
\\ 
I\!I_\alpha &\mbox{the cycle $h_D^{-2}(c)$ is the sum of at most $n$ cycles of type~$I\!I\!I$ or~$IV$;}
\\ 
I\!I_\beta &\mbox{the cycle $h_D^{-2}(c)$ is the sum of at most $n$ cycles of type~$I\!I\!I$.}
\end{cases}
\end{equation*}
Also the cycle $h_D^{-2}(\cybis{0,0})$ is the sum of one cycle of type $V_\beta$ and of at most $n$ cycles of type~$I\!I\!I$.
\end{lemma}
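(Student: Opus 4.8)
The plan is a case analysis over the type of the basic cycle $c$, each case being settled by iterating one of the formulas of Section~\ref{S:SecondPass}. I would split the seven types, together with the cycle $\cybis{0,0}$, into three groups: peripheral cells (types $I\!I\!I$ and $IV$), try squares (types $V_\alpha$ and $V_\beta$), and medial cells (types $I\!I_\alpha$ and $I\!I_\beta$) together with $\cybis{0,0}$.

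\emph{Peripheral cells and try squares.} If $c=\cybis{i,j}$ with $\vert i\vert\ge 2$, Lemma~\ref{T:ImagePeripherique} says that $h_D^{-1}$ carries a right (resp.\ left) peripheral cell to its NW (resp.\ NE) neighbour $(i\mp 1,\,j-1)$, which again lies in $D$ — indeed, in the original (unrotated) Young diagram this neighbour is the cell immediately to the left (resp.\ immediately above), hence present. This neighbour is again peripheral exactly as long as the first coordinate keeps absolute value $\ge 2$, so after $t_c:=\vert i\vert-1$ steps we land on the medial cycle $\cybis{\pm 1,\,j-\vert i\vert+1}$ (sign according to the side); note that the quantity $j-\vert i\vert$ is unchanged along the orbit. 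For type $I\!I\!I$ this gives $t_c\ge b_0/4$ (up to the evident rounding) and a medial image of type $I\!I_\alpha$ or $I\!I_\beta$; for type $IV$ it gives $t_c< b_0/4$, and the row coordinate $j-\vert i\vert+1> b_0/2$ of the medial image makes it of type $I\!I_\beta$. Likewise, by Lemma~\ref{T:ImageEquerre}, $h_D^{-1}$ sends the try-square cycle over $\equerre j$ to the one over $\equerre{j-2}$ (the shift is by $2$ because the rows of a fixed column all have the same parity), and each intermediate try square is contained in $D$ as soon as $\equerre j$ is; iterating $j/2$ times reaches $\equerre 0=\{(0,0)\}$, i.e.\ $\cybis{0,0}$, so $t_c:=j/2$, which is $\le b_0/4$ for type $V_\alpha$ and $\ge b_0/4$ for type $V_\beta$.

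\emph{Medial cells and $\cybis{0,0}$.} For a medial cycle $c$, Lemma~\ref{T:DoubleImageCentreDroit} (and its left--right mirror) already states that $h_D^{-2}(c)$ is a sum of at most $n$ elementary cycles confined to the bottom-left (resp.\ bottom-right) corner of the diagram; what remains is to read off the type of each of these cells. This is also why one iterates $h_D^{-1}$ twice rather than once: after a single step the support still carries mass on the vertical accessible rays through columns $0$ and $\pm 1$, i.e.\ on central, medial and type-$X$ cells, and it is the second step — reprocessing the near-central part through Lemmas~\ref{T:ImageCentre}, \ref{T:ImageEquerre} and~\ref{T:ImageCentreDroit} while the outer part merely slides NW/NE — that, via the cancellations in the proof of Lemma~\ref{T:DoubleImageCentreDroit}, removes exactly those bad contributions. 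The surviving terms are carried by the rays $\acces_m$ with $m\ge 2$, which lie far out: where their column index is large one has $\vert k\vert> b_0/4$ (type $I\!I\!I$), and where $\vert k\vert$ is still small the ray has already descended near the bottom of the diagram, so $l-\vert k\vert> b_0/2$ (type $IV$); if $c$ is of type $I\!I_\beta$ the extra height present from the outset pushes the whole support past $\vert k\vert> b_0/4$, so only type $I\!I\!I$ occurs. For $\cybis{0,0}$ I would use the second identity of Lemma~\ref{T:ImageZeroZero}: its first summand $-\sum_{(k,l)\in\equerre{b_0}}\cybis{k,l}$ is a try-square cycle at level $b_0> b_0/2$, hence of type $V_\beta$, while the remaining (at most $n$) shifted cells sit on rays bottomed out along the left or right edge and therefore satisfy $\vert k\vert> b_0/4$, i.e.\ are of type $I\!I\!I$.

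\emph{Main obstacle.} The delicate point is the last paragraph: upgrading the qualitative statement ``$h_D^{-2}$ of a medial cycle lands in the corner'' to the exact type membership. I expect this to require carefully tracking which accessible rays survive the cancellations, followed by a tedious but routine comparison of the recursive descriptions of $\acces_m$, $\accesg_m$ and $\accesd_m$ with the thresholds $b_0/4$ and $b_0/2$, using repeatedly that a cell $(k,l)$ of $D$ satisfies $l\ge\vert k\vert$. (For very small diagrams the relevant column heights need not dominate $b_0$ and the statement must be read in the regime where $b_0$ is large; this is harmless, since for small $b_0$ the resulting annulus bound is in any case essentially vacuous.)
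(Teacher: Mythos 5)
Your case analysis follows exactly the paper's own proof of Lemma~\ref{T:CroissanceBase}: peripheral cells via Lemma~\ref{T:ImagePeripherique}, try squares via Lemma~\ref{T:ImageEquerre} (and you correctly read the step as $\equerre{j}\to\equerre{j-2}$, which the lemma's displayed equation typos as $\equerre{j-1}$), medial cells via Lemma~\ref{T:DoubleImageCentreDroit}, and $\cybis{0,0}$ via Lemma~\ref{T:ImageZeroZero}. This is the same argument the paper gives, with slightly more detail on the bookkeeping and an honest flag on the remaining type-membership check for the $I\!I$ case, which the paper also treats only by reference to the accessible-ray formulas.
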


\begin{proof}
If $c$ is of type~$I\!I\!I$ or $IV$, then $c$ corresponds to a peripheral cell, and Lemma~\ref{T:ImagePeripherique} implies that iterated images of~$c$ under~$h_D^{-1}$ go step by step to the center, jumping from one cell to an adjacent one closer to the center of~$D$. The time needed is then prescribed by the distance between the initial cell and the three central columns of~$D$. 

For $c$ is of type~$V_\alpha$ or~$V_\beta$, Lemma~\ref{T:ImageEquerre} describes its iterated images. They are of type~$V$ until they reach the cycle~$\cybis{0,0}$, and the time needed is half the initial height. 
For $c$ of type~$I\!I$, the key-case, Lemma~\ref{T:ImageCentreDroit} describes its image by~$h_D^{-2}$ which, again, has the expected form.
Finally, Lemma~\ref{T:ImageZeroZero} describes the image of~$\cybis{0,0}$.
\end{proof}

We can now state the main result.

\begin{proposition}
\label{T:CheminCellules}
Let $D$ be a Young diagram with $n$ cells, whose central column has $b_0/2$~cells. Let $K_D$ be the associated Lorenz knot. Then all eigenvalues of the homological monodromy of~$K_D$ lie in the annulus~$\left\{z\in\C~\big\vert~ n^{-8/b_0} \le \vert z \vert \le n^{8/b_0} \right\}$.
\end{proposition}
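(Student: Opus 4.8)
The plan is to bound the spectral radius of the inverse monodromy $h_D^{-1}$ by $n^{8/b_0}$ and then to invoke the symmetry of the Alexander polynomial. Fix the basis $\mathcal B$ of $H_1(\seifbis;\Z)$ provided by Lemma~\ref{T:Base} — the cycles of types $I\!I_\alpha,I\!I_\beta,I\!I\!I,IV,X,V_\alpha,V_\beta$ — and let $\|\cdot\|_1$ be the $\ell^1$-norm on $H_1(\seifbis;\Z)\otimes\R$ associated with $\mathcal B$. Set $f(t)=\max_{c\in\mathcal B}\|h_D^{-t}(c)\|_1$, which is exactly the $\ell^1$-operator norm of $(h_D^{-1})^t$; being submultiplicative, Gelfand's formula gives that the spectral radius of $h_D^{-1}$ equals $\inf_{t\ge 1}f(t)^{1/t}$. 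So it is enough to show that $f(t)$ is at most a polynomial in $t$ times $n^{8t/b_0}$ for all $t\ge 1$, which forces the spectral radius of $h_D^{-1}$ to be $\le n^{8/b_0}$.

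I would estimate $f(t)$ by viewing the iteration of $h_D^{-1}$ on a basis cycle as a signed branching tree: the root carries $c$, each edge is an application of $h_D^{-1}$, each vertex carries a single basis cycle with a sign $\pm1$, and a vertex splits into several children exactly when $h_D^{-2}$ turns the cycle it carries into a sum of several basis cycles. By Lemma~\ref{T:CroissanceBase} the only vertices that split — call them \emph{growth vertices} — are those carrying a cycle of type $I\!I_\alpha$, of type $I\!I_\beta$, or the cycle $\cybis{0,0}$; passing such a vertex advances the time by $2$ and produces at most $n$ children of type $I\!I\!I$ or $IV$ (the $\cybis{0,0}$ case also produces one extra child of type $V_\beta$). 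All other moves (peripheral moves on types $I\!I\!I$, $IV$, $X$ via Lemma~\ref{T:ImagePeripherique}, and try-square moves on types $V_\alpha,V_\beta$ via Lemma~\ref{T:ImageEquerre}) are single-cycle, sign-preserving and non-splitting. Hence the coefficient along any root-to-leaf branch stays $\pm1$, so $\|h_D^{-t}(c)\|_1$ is at most the number of leaves at depth $t$, which is at most a polynomial in $t$ times $n^{M(t)}$, where $M(t)$ is the maximal number of \emph{non-central} growth vertices (those of type $I\!I_\alpha$ or $I\!I_\beta$) on a root-to-leaf path of length $t$. The extra type-$V_\beta$ children at the $\cybis{0,0}$-vertices cause no trouble: all $\cybis{0,0}$-vertices lie on a single chain, because once a branch leaves the central chain (a type-$I\!I\!I$ or $IV$ cycle never evolves back to $\cybis{0,0}$) it stays off it, so these vertices contribute only the polynomial factor.

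The crux is the bound $M(t)\le 8t/b_0+3$. Set $W=\lceil b_0/4\rceil$. By Lemma~\ref{T:CroissanceBase}, a cycle of type $I\!I\!I$ needs at least $W$ applications of $h_D^{-1}$ before becoming a cycle of type $I\!I$, staying a single cycle meanwhile; in particular a growth vertex all of whose children are of type $I\!I\!I$ — which is the case for a growth vertex of type $I\!I_\beta$, and, away from the type-$V_\beta$ child, for a $\cybis{0,0}$-vertex — is followed on every branch by at least $W$ non-growth steps. The only way to have two growth vertices at distance $<W$ on a branch is thus $I\!I_\alpha\to(\text{a type-}IV\text{ child, brought to type }I\!I_\beta\text{ in fewer than }W\text{ steps})\to\text{growth}$; but this second growth vertex is of type $I\!I_\beta$, so the preceding remark forces the \emph{next} growth vertex to be at distance $\ge W$. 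Hence on any branch the non-central growth vertices occur in isolated singletons or pairs, consecutive groups being at least $W$ apart, so a path of length $t$ carries at most $2\lceil t/W\rceil+1\le 2t/W+3\le 8t/b_0+3$ of them. (A branch may begin at a type-$X$, $IV$, $V_\alpha$ or $V_\beta$ cycle, but each of these reaches a type-$I\!I$ cycle or $\cybis{0,0}$ in at most $W$ non-splitting steps, which only changes the additive constant.) Combining the two previous paragraphs, $f(t)$ is at most a polynomial in $t$ times $n^{8t/b_0}$, and the spectral radius of $h_D^{-1}$ is at most $n^{8/b_0}$.

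Finally, $h_D$ is an automorphism of $H_1(\seifbis;\Z)$ whose characteristic polynomial is, up to a unit, the Alexander polynomial of $K_D$; since the Alexander polynomial of a knot is symmetric, the spectrum of $h_D$ — equivalently that of $h_D^{-1}$ — is invariant under $z\mapsto z^{-1}$. The estimate just obtained then yields $n^{-8/b_0}\le|\lambda|\le n^{8/b_0}$ for every eigenvalue $\lambda$ of $h_D$, which is precisely the asserted annulus. The main obstacle is the combinatorial bookkeeping behind $M(t)\le 8t/b_0+3$: making the ``at most two growth vertices per $W$ steps'' dichotomy completely rigorous and uniform over all branches and all diagrams rests on the precise transition data of Lemma~\ref{T:CroissanceBase}, and in particular on the coordinate estimates of Lemmas~\ref{T:DoubleImageCentreDroit} and~\ref{T:ImageZeroZero}, which are exactly what guarantees that the $h_D^{-2}$-image of a medial cycle, or of $\cybis{0,0}$, lands far enough from the medial columns for the type-$I\!I\!I$ transit time to be at least $W$.
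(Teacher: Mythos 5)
Your argument is correct and follows essentially the same route as the paper: you work in the basis from Lemma~\ref{T:Base}, observe from Lemma~\ref{T:CroissanceBase} that the only norm-increasing steps are at $I\!I_\alpha$, $I\!I_\beta$, or $\cybis{0,0}$ cycles, and that after such a step the branch must pass through type-$I\!I\!I$ (or $IV$) cells for at least $\sim b_0/4$ iterations, giving at most two ``growth'' events per $b_0/4$ steps; this yields the $n^{8/b_0}$ bound on the spectral radius of $h_D^{-1}$, and then the symmetry of the spectrum gives the annulus. The only cosmetic difference is at the very end: the paper invokes the symplectic invariance of the intersection form under $h_D$ to conclude that the spectrum is invariant under $z\mapsto z^{-1}$, whereas you appeal to the reciprocity of the Alexander polynomial of a fibered knot; these are, of course, two phrasings of the same fact. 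Your branching-tree formulation, with the explicit $M(t)$ bound and the observation that the $\cybis{0,0}$-vertices all sit on one chain, is somewhat more carefully worded than the paper's ``norm multiplied by at most $n^2$ in time $b_0/4$,'' but it is the same underlying combinatorial dichotomy (singletons versus $I\!I_\alpha\to IV\to I\!I_\beta$ pairs separated by $\geq b_0/4$ non-growth steps).
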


\begin{figure}[htbp]
	\begin{center}
	\begin{picture}(345,170)
	\put(0,0){\includegraphics[width=0.8\textwidth]{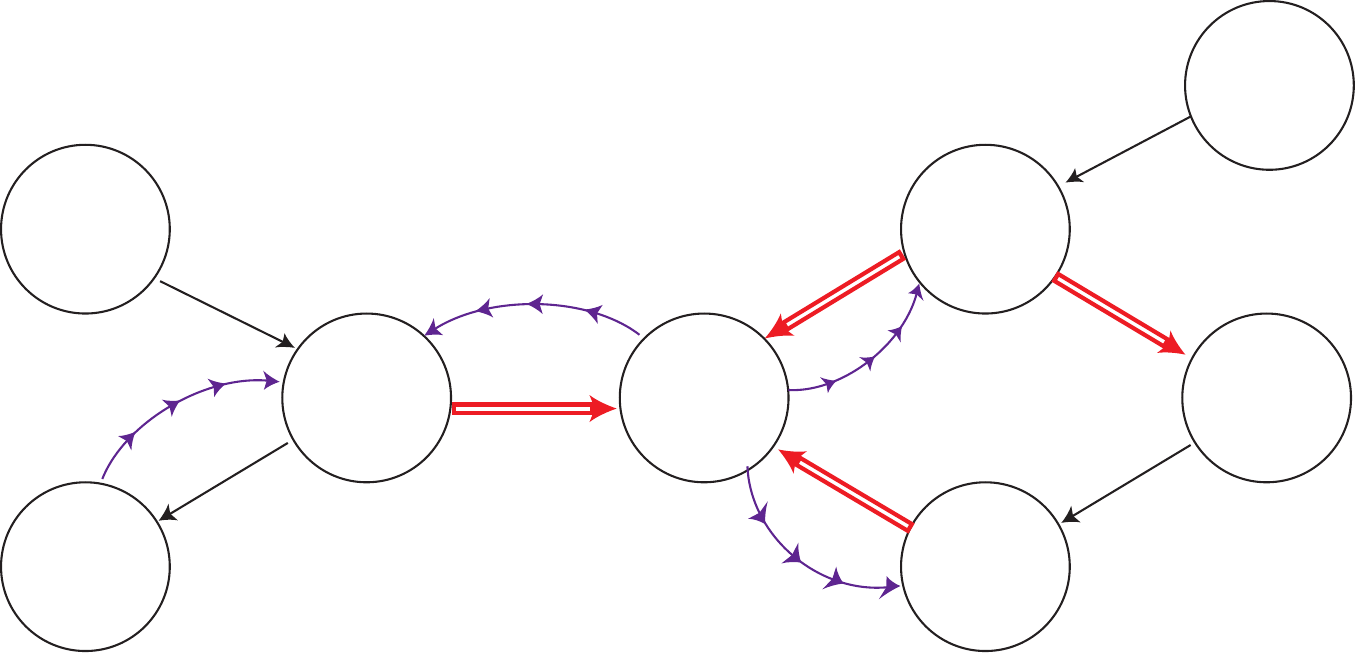}}
	\put(17,107){$V_\alpha$}
	\put(17,18){$V_\beta$}
	\put(251,107.5){$I\!I_\alpha$}
	\put(251,18){$I\!I_\beta$}
	\put(177,63){$I\!I\!I$}
	\put(324.5,62.5){$IV$}
	\put(327,145){$X$}
	\put(84,63){$\cybis{0,0}$}
	\end{picture}
	\end{center}
	\caption{\small Growth of the~$\ell^1$-norm of cycles when the monodromy is iterated. Bold arrows mean that the number of cycles may be multipied by at most~$n$. Small consecutive arrows mean that at least~$b_0/4$ iterations are needed in order to reach the final cell. The key point is that every path containing at least three bold arrows must include a sequence of small arrows.}
	\label{F:CheminsCellules}
\end{figure}

\begin{proof}(See Figure~\ref{F:CheminsCellules})
Let $\seifbis$ be the mixed Seifert surface associated with~$D$.
Let $c$ be a basic cycle of~$H_1(\seifbis; \Z)$. By Lemma~\ref{T:CroissanceBase}, the inverse of the monodromy~$h_D^{-1}$ can increase the $\ell^1$-norm of~$c$ only if $c$ is of type~$I\!I_\alpha, I\!I_\beta$, or if $c$ is the cycle~$\cybis{0,0}$. 
Figure~\ref{F:CheminsCellules} shows that this cannot happen too often: after two iterations, the cycle~$\cybis{0,0}$ and all cycles of type~$I\!I_\beta$ are transformed into at most $n$ cycles of type~$I\!I\!I$, whose norm will not grow in the next~$b_0/4$ iterations of~$h_D^{-1}$.
If $c$ is of type~$I\!I_\alpha$, then there may be two iterations that increase the norm, but the limitation arises: all subsequent cycles are of type~$I\!I\!I$. Therefore, the norm can be multiplied by at most~$n^2$ in a time~$b_0/4$. Then there exists a constant~$A$ so that for every cycle~$c$ and every time~$t$, we have 
\begin{equation*}
\big\vert \log \left(\| h_D^{-t}(c) \|_{{}_1} \right) \big\vert \le \frac{8\log n}{b_0}t + A.
\end{equation*} 
It follows that the eigenvalues of~$h_D^{-1}$ lie in the disk~$\left\{ z~\big\vert~\vert z \vert \le n^{8/b_0}\right\}$. 

On the other hand, the map~$h_D$ preserves the intersection form on~$\seifbis$, which is a symplectic form~\cite[chapter 6]{FarbMargalit}. This implies that the spectrum of~$h_D$ is symmetric with respect to the unit circle~\cite[chapter 1]{Fomenko}. We deduce that the eigenvalues of~$h_D$ lie in the annulus~$\left\{z~\big\vert~ n^{-8/b_0} \le \vert z \vert \le n^{8/b_0} \right\}$.
\end{proof}

We can now conclude.

\begin{proof}[Proof of Theorem~\ref{Theoreme}]
As Lorenz knots are fibered, the zeroes of their Alexander polynomial are eigenvalues of the homological monodromy~\cite{Milnor}. The genus of a Lorenz knot is half the number of cells in every associated Young diagrams~\cite[Corollary 2.4]{EM}, and its braid index is the number of cells of the central column plus one~\cite[main theorem]{FranksWilliams}. The result therefore follows from applying Proposition~\ref{T:CheminCellules}  with $n=2g$ and $b_0=2b-2$.
\end{proof}

\begin{example}
It is known that every algebraic knot is a Lorenz knot~\cite{BW}, and that the zeroes of the Alexander polynomial of an algebraic knot all lie on the unit circle. Therefore they {\it a fortiori} lie in the annulus given by Theorem~\ref{Theoreme}.

The first Lorenz knot whose Alexander polynomial has at least one zero outside the unit circle is the knot associated with the Young diagram~$(4,4,2)$ (see the census~\cite{atlas} of the Lorenz knots with period at most~21). Its genus is~$5$ and its braid index is~$3$. One can indeed check that the $10$~zeroes of the Alexander polynomial satisfy~$20^{-4/2}\le \vert z \vert \le 20^{4/2}$, as prescribed by Theorem~\ref{Theoreme}.

The zeroes of the Alexander polynomials of two generic Lorenz knots with respective braid index~$40$ and~$100$ are displayed on Figure~\ref{F:Racines}. As asserted in Theorem~\ref{Theoreme}, all zeroes lie in some annulus around the unit circle, and the width of the annulus decreases when the braid index increases. Experiments involving large samples of random Young diagrams suggest that the pictures of Figure~\ref{F:Racines} are typical for Lorenz knots of the considered size, {\it i.e.}, that the width of the annulus is roughly determined by the braid index.
\end{example}

\begin{figure}[htbp]
	\includegraphics[width=0.4\textwidth]{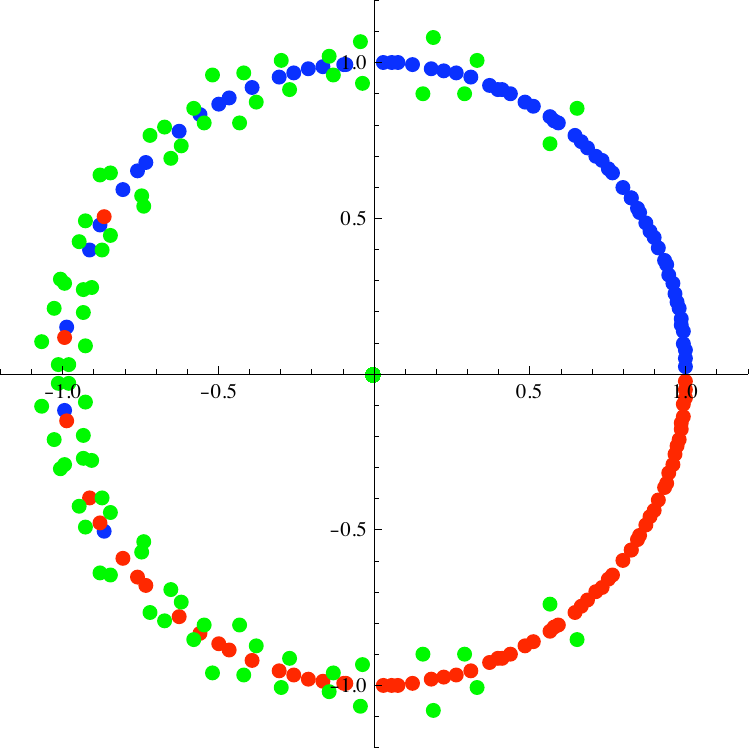}
	\includegraphics[width=0.4\textwidth]{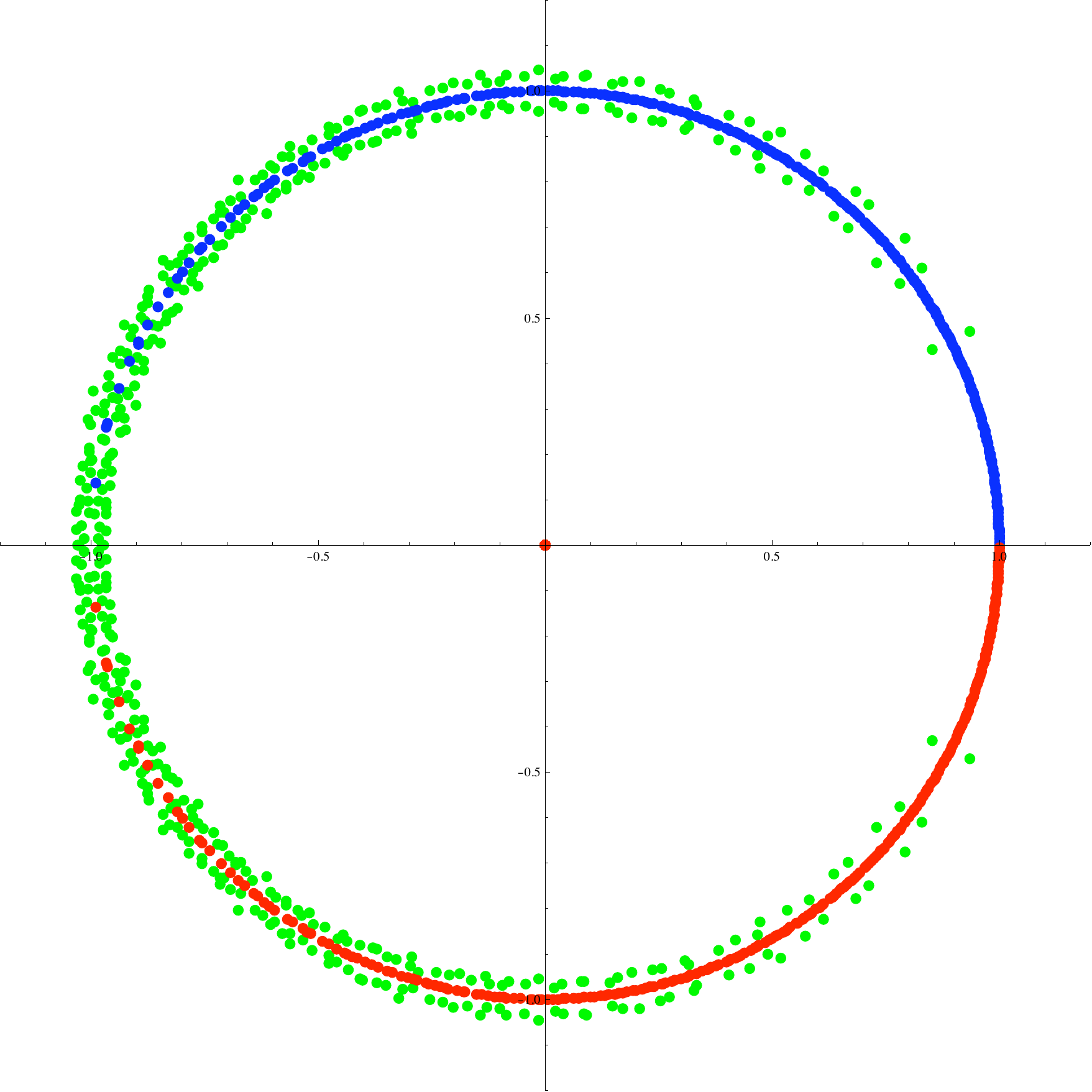}	
	\caption{\small Positions of the zeroes of the Alexander polynomial of two generic Lorenz knots, with braid index~40 and genus~$100$ on the left, and with braid index~100 and genus~$625$ on the right. Green dots correspond to zeroes outside the unit circle, whereas blue and red dots correspond to zeroes on the circle. The annulus containing the zeroes is smaller on the right, as stated by Corollary~\ref{Coro} for typical Lorenz knots.
	}
	\label{F:Racines}
\end{figure}

We now mention two direct consequences of Theorem~\ref{Theoreme}. The first one is a criterion for proving that a knot is not a Lorenz knot. 

\begin{definition}
\label{D:Invariant}
Assume that $K$ is a knot. Let $b$ be its braid index, $g$ be its genus, and $m$ be the maximal modulus of a zero of its Alexander polynomial. Then define the invariant~$r(K)$ as the quotient~${(b-1)\log(m)}/{\log(2g)}$.
\end{definition}

\begin{corollary}
\label{C:NotLorenz}
Let $K$ be a knot. If $r(K)>1$ holds, then $K$ is not a Lorenz knot.
\end{corollary}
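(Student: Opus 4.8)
The corollary is the contrapositive of Theorem~\ref{Theoreme} after passing to logarithms, so I expect the argument to cost only a few lines; the one point deserving a moment's care is that $r(K)$ be well defined on the knots where the statement has content.

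The plan is as follows. First I would argue by contraposition: assume $K$ \emph{is} a Lorenz knot, write $g$ for its genus, $b$ for its braid index, and $m$ for the maximal modulus of a zero of its Alexander polynomial, and aim to bound $r(K)$ from above. By Theorem~\ref{Theoreme} every zero $z$ of the Alexander polynomial of $K$ satisfies $(2g)^{-4/(b-1)}\le|z|\le(2g)^{4/(b-1)}$, so in particular $m\le(2g)^{4/(b-1)}$. Next I would take logarithms. This is legitimate for every knot on which $r$ is defined: there $b\ge 2$ (else the factor $b-1$ vanishes), $g\ge 1$ (else $\log(2g)$ is not positive), and $\Delta_K$ is non-constant so that $m$ exists; moreover $m\ge 1$ since the zeroes of a symmetric polynomial occur in pairs $z,1/z$. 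One then gets $\log m\le\frac{4}{b-1}\log(2g)$, and multiplying through by the positive number $(b-1)/\log(2g)$ yields
\[
r(K)=\frac{(b-1)\log m}{\log(2g)}\le 4 .
\]
Contraposing gives the statement: a knot whose invariant $r$ exceeds the exponent constant supplied by Theorem~\ref{Theoreme} is not a Lorenz knot, which is Corollary~\ref{C:NotLorenz} (with that constant in place of the displayed threshold, or directly if the exponent of Theorem~\ref{Theoreme} is sharpened to $1/(b-1)$).

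There is essentially no obstacle here: all the content sits in Theorem~\ref{Theoreme}, and the corollary merely recasts the maximal-modulus bound as a numerical necessary condition. The only item I would be careful to state is the well-definedness of $r(K)$ discussed above, which also pins down the range of knots to which the corollary applies. I would close by noting, since this is the purpose of the statement, that $b$, $g$ and $\Delta_K$ are all algorithmically computable from a diagram of $K$, so $r(K)$ is an effective quantity and Corollary~\ref{C:NotLorenz} a genuinely usable obstruction to being a Lorenz knot.
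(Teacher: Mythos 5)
Your argument is correct and coincides with the paper's own one-line proof: contrapose Theorem~\ref{Theoreme}, take logarithms, and rearrange, with the side observation that $r(K)$ is well defined (and $m\ge1$) for any knot for which the statement has content. You are also right to flag the constant: Theorem~\ref{Theoreme} gives $m\le(2g)^{4/(b-1)}$ and hence $r(K)\le 4$ for a Lorenz knot, so as printed the corollary's threshold should be $4$ (equivalently, Definition~\ref{D:Invariant} should carry a factor $4$ in the denominator, or the theorem's exponent would have to be $1/(b-1)$); the paper's proof, which claims that $r(K)>1$ already forces a zero outside the annulus of Theorem~\ref{Theoreme}, silently uses the sharper exponent.
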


Indeed, if $r(K)$ is larger than~$1$, then at least one zero of the Alexander polynomial of~$K$ does not lie in the annulus of Theorem~\ref{Theoreme}, so that the knot cannot be a Lorenz knot.
Using the tables of Livingstone~\cite{Livingston} for knot invariants up to 11 crossings, we could check in this way that 18 out of the 502 knots are not of Lorenz type (according to~\cite{GhysMadrid, atlas}, there are only $8$ Lorenz knots in the above range).

The second consequence of Theorem~\ref{Theoreme} involves the asymptotical position of the zeroes of the Alexander polynomials of a closed orbit of the Lorenz flow, when the length of the orbit goes to infinity. For all~$t$, they are only finitely many closed orbits whose period lies in the interval~$[\,t, (1{+}\epsilon)\, t\,]$. The result states that the longer the orbit, the closer its roots to the unit circle.

\begin{corollary}
\label{Coro}
For every $\epsilon$, there exist $c,c'$ so that the proportion of Lorenz knots with period in the interval~$[\,t, (1{+}\epsilon)\, t\,]$ and with zeroes of the Alexander polynomial all lying in the annulus~$\{ z\in\C \,\big\vert\, c\,t^{-c'/t} \le \vert z \vert \le c\,t^{c'/t} \}$ tends to~$1$ as $t$ goes to infinity.
\end{corollary}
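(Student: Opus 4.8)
The plan is to derive the corollary from Theorem~\ref{Theoreme} together with a counting estimate on Young diagrams. Fix $\epsilon>0$. By Theorem~\ref{Theoreme} the zeroes of the Alexander polynomial of a Lorenz knot $K$ of genus $g$ and braid index $b$ all lie in $\{z\in\C\,|\,(2g)^{-4/(b-1)}\le|z|\le(2g)^{4/(b-1)}\}$, and since $t^{\pm c'/t}\to1$ this set is contained in the annulus of the statement as soon as $\log(2g)/(b-1)\le (c'\log t)/t+O(1/t)$. So it suffices to control $\log(2g)$ and $b-1$ for a \emph{typical} Lorenz knot with period in $[t,(1+\epsilon)t]$.

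For this I would work with a Young diagram $D$ realising such a knot. Two size estimates are elementary: the period $p$ of the orbit is, up to a bounded factor, the number of strands of the Lorenz braid $b_D$ (see \cite{BW,EM}), hence $D$ sits in a staircase region of linear size $O(p)$, so its number of cells satisfies $2g=n=O(p^2)=O(t^2)$, whence $\log(2g)\le 2\log t+O(1)$; and, as recalled in the proof of Theorem~\ref{Theoreme}, $b-1$ equals the number of cells of the central column of $D$ (\cite{FranksWilliams}). Consequently the displayed inequality holds, with $c'$ depending only on $\epsilon$, as soon as the central column of $D$ has at least $\delta t$ cells for some fixed $\delta=\delta(\epsilon)>0$. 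The corollary is therefore reduced to: among the Lorenz knots realisable by an orbit of period in $[t,(1+\epsilon)t]$, the proportion whose braid index is below $1+\delta t$ tends to $0$ as $t\to\infty$.

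To establish this I would first prove the corresponding statement for Young diagrams: for a suitably small $\delta>0$, the number of Young diagrams of period $p$ whose central column has fewer than $\delta p$ cells is exponentially smaller, in $p$, than the total number of Young diagrams of period $p$. Young diagrams of period $p$ are encoded by lattice paths of length $O(p)$ (equivalently, partitions fitting in a staircase box), so the total count is exponential in $p$ and has good asymptotics, while imposing a short central column is a hard linear constraint on the shape; a limit-shape picture — a typical such diagram is ``deep'', with central column of size $\Theta(p)$ — makes a large-deviations bound plausible. This combinatorial large-deviations estimate is, I expect, the main difficulty. Then I would descend to knots: the map $D\mapsto K_D$ from period-$p$ diagrams to Lorenz knots is finite-to-one with multiplicity at most polynomial in $p$ (recovering an orbit of given period from its knot type involves only cyclic relabelling and a bounded symmetry group, so it is $O(p)$), hence the number of distinct Lorenz knots realisable with period at most $(1+\epsilon)t$ is at least a $1/O(t)$ fraction of the number of such diagrams, whereas the number of \emph{bad} knots is bounded by the number of \emph{bad} diagrams. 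Since the latter is exponentially small in $t$ relative to the total, it is in particular $o(1/t)$ of the total, so the bad proportion among knots still tends to $0$.

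Combining the three steps: for all but a vanishing proportion of Lorenz knots with period in $[t,(1+\epsilon)t]$ one has $b-1\ge\delta t$ and $2g=O(t^2)$, so by Theorem~\ref{Theoreme} all zeroes of the Alexander polynomial lie in $\{(2g)^{-4/(b-1)}\le|z|\le(2g)^{4/(b-1)}\}\subseteq\{ct^{-c'/t}\le|z|\le ct^{c'/t}\}$ for a suitable $c'=c'(\epsilon)$ and a constant $c$ chosen to absorb the finitely many small values of $t$; this is exactly the corollary. The only serious obstacle is the large-deviations count for the central-column length of a random Lorenz diagram of given period; the diagram-to-knot descent and the application of Theorem~\ref{Theoreme} are routine once the (classical) multiplicity bound for Lorenz orbits is in hand.
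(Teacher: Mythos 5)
Your overall route matches the paper's: invoke Theorem~\ref{Theoreme} and reduce the corollary to showing that for typical Lorenz knots of period near~$t$ one has $2g=O(t^2)$ and $b-1$ bounded below by a constant times~$t$. The genuinely different ingredient is how you argue the genericity of a long central column (equivalently, large braid index). The paper invokes a dynamical statement about the Lorenz flow: a generic period-$t$ orbit crosses the two ears of the Lorenz template at least $dt$~times and roughly equidistributedly, which forces the associated Young diagram to contain a square of side comparable to~$t$; from there the genus and braid-index estimates are immediate. You instead propose a purely combinatorial large-deviations count over Young diagrams of given period, showing that diagrams with a short central column are exponentially rare, followed by a descent from orbits to knots via a polynomial multiplicity bound. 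Your descent step is a welcome level of care that the paper silently finesses (it speaks of ``generic orbits'' but the statement counts knot types). However, as you yourself flag, the large-deviations estimate is the crux and is left unproven; and it is not obviously cheaper than the dynamical claim, since a uniform count over cyclic Lorenz words of length~$p$ is essentially the Parry/Markov measure one would use to formalize the paper's equidistribution statement---so you are re-deriving the ergodic fact in its symbolic-dynamics incarnation rather than sidestepping it. In short: same strategy, a different phrasing of the key genericity step, and that key step remains an acknowledged gap on both sides.
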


\begin{proof}
There exists a constant $d$ such that a generic length~$t$ orbit of the Lorenz flow crosses the axes of the Lorenz template~(see~\cite[Figure~1]{EM}) at least $dt$~times. Therefore the sum of the width and the height of the Young diagrams associated to generic orbits is at least~$dt$. The braid index of the knot being the size of the largest square sitting inside the Young diagram, it is at least $dt/4$ for a generic period $t$ orbit. The genus of the knot being half the number of cells of the diagram, it is at most~$(dt)^2/8$. Therefore the width of the annulus of Theorem~\ref{Theoreme} associated to generic orbits of the Lorenz flow is at most~$(dt/2)^{8/dt}$.
\end{proof}

\subsection{Further questions}
\label{S:Questions}

We conclude with a few more speculative remarks.

First, by Corollary~\ref{C:NotLorenz}, for every Lorenz knot~$K$, the invariant~$r(K)$ is smaller than~$1$. Numerical experiments indicate that, for Lorenz knots, $r(K)$ might tend to a number close to~$0.15$ when both the braid index and the genus tend to infinity. This suggests that the order of magnitude exhibited in Theorem~\ref{Theoreme} is optimal, but that the constant in the exponent could be improved. More generally, this refers to

\begin{question}
\label{Q:MeilleureBorne}
Is the lower bound of Theorem~\ref{Theoreme} optimal?
\end{question}

A vast abundance of articles~\cite{BBK, Hironaka2, HironakaKin, LT, Penner} are more interested by the dilatation of surface homeomorphisms, which controls the action of the homeomorphism on curves, rather than cycles. Unless the underlying train tracks are orientable -- a very strong restriction pointed out to us by J.\,Birman and not achieved in general by Lorenz knots -- these two invariants do not coincide, the geometrical growth rate being larger, so that our main result does not allow to control the dilatation of the monodromies of the Lorenz knots. However, the key-lemma~\ref{T:ImageInterieure} also holds for curves. Indeed, the image of a curve surrounding an internal cell of a Lorenz knot is a curve surrounding a neighbouring cell. This suggests that curves might also be stretched at a slow rate by the monodromy.

\begin{question}
\label{Q:Geometrical}
Does the dilatation of the monodromy of a Lorenz knot admit bounds similar to Thereom~\ref{Theoreme}?
\end{question}

For generic Lorenz knots, the braid index is of the order of the square root of the genus, so that the value of the parameter~$\log (m)$ is of the order of~$\log (g)/\sqrt{g}$, a value coherent with the above mentioned computer experiments. By contrast, a theorem of Penner~\cite{Penner} says that the dilatation of a pseudo-Anosov map on a surface of genus~$g$ is bounded from below by a function of the order of~$1/g$, an optimal bound. Therefore, the monodromies of generic Lorenz knots do not seem to be pseudo-Anosov homeomorphisms with minimal growth rate. Nevertheless, the situation could be different for particular subfamilies:

\begin{question}
Is there an infinite family of Lorenz knots admitting monodromies with a homological growth rate of the order of~$1/g$?
\end{question}

In a totally different direction, Figure~\ref{F:Alea} shows the location of the zeroes of the Alexander polynomial of random positive braids with braid index $3,4$, and $5$, respectively, and of a non-positive random braid. When the braid index has a fixed value~$b$ and we consider positive braids with increasing length, the majority of the roots seem to accumulate on a specific curve, which depends on the braid index and on the probabilities of the generators~$\sigma_i$, and which is smooth except at some singular points whose arguments are multiples of~$2\pi/b$. This situation contrasts with Theorem~\ref{Theoreme} radically, and no explanation is known so far.

\begin{figure}[htbp]
	\includegraphics[width=0.2\textwidth]{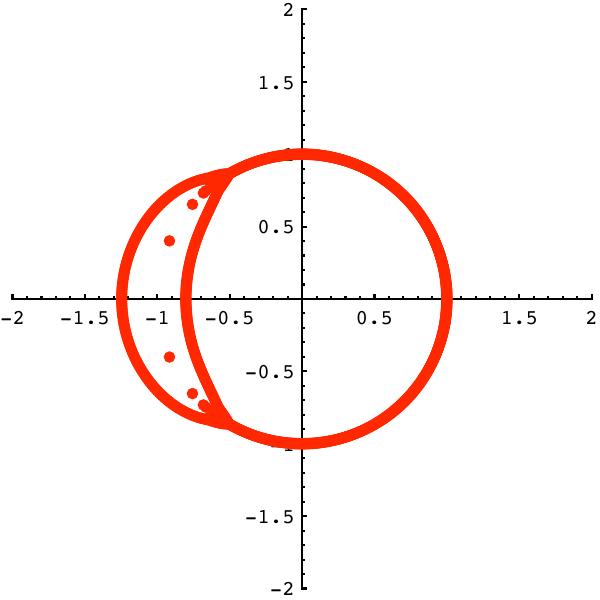}
	\includegraphics[width=0.2\textwidth]{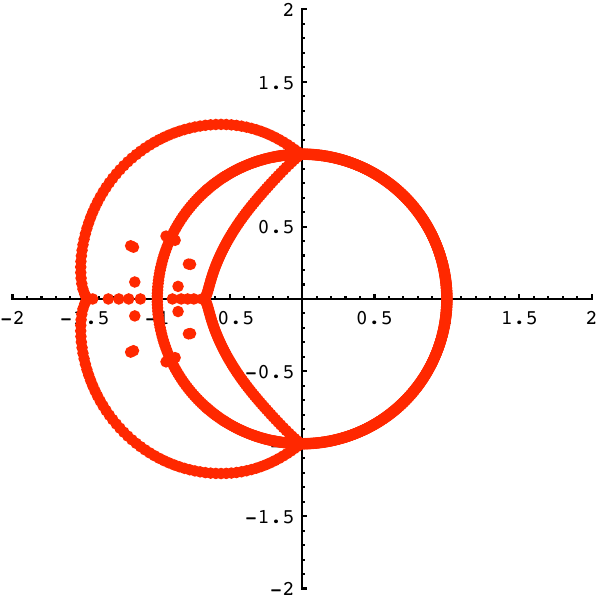}
	\includegraphics[width=0.2\textwidth]{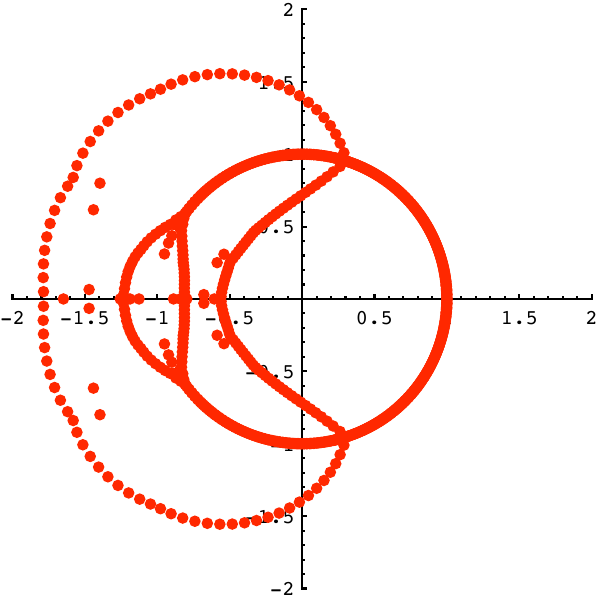}
	\includegraphics[width=0.2\textwidth]{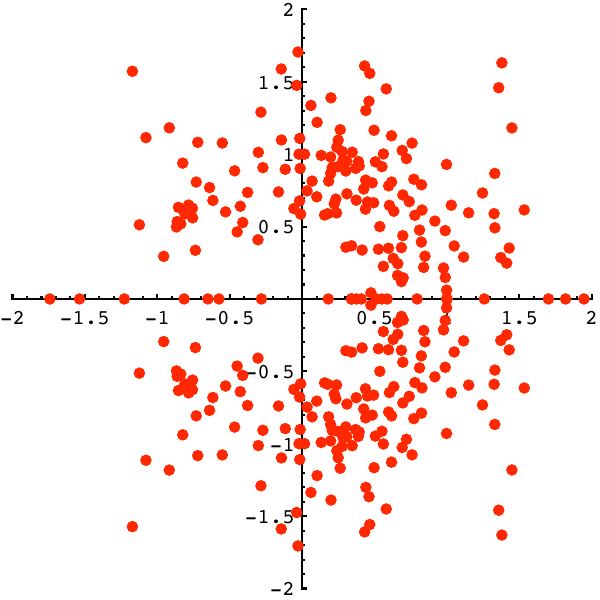}
	\caption{\small 	The zeroes of the Alexander polynomial of random braids of length~200. From left to right: a positive braid of index~3, a positive braid of index~4, a positive braid of index~5, and a braid with both positive and negative crossings of index~5; in each case the generators~$\sigma_i$ are chosen with a uniform distribution. In the first three cases, the zeroes seem to accumulate on very particular curves.
	}
	\label{F:Alea}
\end{figure}

Via the Burau representation, the Alexander polynomial of the closure of a length $\ell$ braid of index~$n$ can be expressed as the determinant of a matrix of the form $M_{i_1} M_{i_2} \cdots M_{i_\ell} - I_n$, where $M_1 , \ldots, M_n$ are the matrices of~$\mathcal{M}_n(\Z[z])$ that correspond to the length one braids~$\sigma_i$. Then, a complex number~$z$ is a zero of the Alexander polynomial if and only if $1$ is an eigenvalue of the corresponding product of matrices. Looking at Figure~25 leads to

\begin{question}
Let $M_1, \ldots, M_m$ be fixed invertible matrices in~$\mathcal{M}_n(\Z[z])$. Form the product $\Pi_\ell(z) = M_{i_1} M_{i_2} \cdots M_{i_\ell}$ where $i_1, \ldots, i_\ell$ are independent and equidistributed random variables in $\{1, \ldots, m\}$, and let $D_\ell$ be the set of~$z$ such that $1$ is an eigenvalue of~$\Pi_\ell(z)$. When does $D_\ell$ admit a Hausdorff limit? And, if so, what does the limit look like? 
\end{question}



\begin{thebibliography}{35}

\bibitem{Arnold}{\sc V.\,I.\,Arnold}, {The asymptotic Hopf invariant and its applications}, {\it Selecta Math.\ Soviet.} {\bf 5} (1986), 327--345.

\bibitem{BaaderMarche}{\sc S.\,Baader and J.\,March\'e}, {Asymptotic Vassiliev invariants for vector fields}, {\it Bull.\ Soc.\ Math.\ France} {\bf 140} (2012), 569--582, \url{http://arxiv.org/abs/0810.3870}

\bibitem{BBK}{\sc J.\,Birman, P.\,Brinkann, and K.\,Kawamuro}, {Polynomial invariants of pseudo-Anosov maps}, \url{http://arxiv.org/abs/1001.5094}.

\bibitem{BK}{\sc J.\,Birman and I.\,Kofman}, {A new twist on Lorenz knots}, {\it J.\ Topol.} {\bf 2} (2009), 227--248, \url{http://arxiv.org/abs/0707.4331}.

\bibitem{BW}{\sc J.\,Birman and R.\,F.\,Williams}, {Knotted periodic orbits in dynamical systems--I: Lorenz's Equations}, {\it Topology} {\bf 22} (1983), 47--82. 

(erratum at \url{http://www.math.columbia.edu/~jb/bw-KPO-I-erratum.pdf})

\bibitem{Boyd1}{\sc D.\,W.\,Boyd}, {Speculations concerning the range of Mahler's measure}, {\it Canad.
Math.\ Bull.} {\bf 24} (1981) 453--469.

\bibitem{Boyd2}{\sc D.\,W.\,Boyd}, {Small Salem numbers}, {\it Duke Math.\ J.} {\bf 44} (1977), 315--328.

\bibitem{Brinkmann}{\sc P.\,Brinkmann}, {A note on pseudo-Anosov maps with small growth rate}, {\it Experiment.\ Math.} {\bf 13} (2004), 49--53, \url{http://arxiv.org/abs/math/0309411}. 

\bibitem{atlas} {\sc Pi.\,Dehornoy}, {Atlas of Lorenz knots},
\url{http://www-fourier.ujf-grenoble.fr/~dehornop/maths/atlaslorenz.txt}

\bibitem{EM} {\sc Pi.\,Dehornoy}, {Les n\oe uds de Lorenz}, {\it Enseign.\ Math.\,(2)} {\bf 57} (2011), 211--280, \url{http://arxiv.org/abs/0904.2437}. 

\bibitem{These} {\sc Pi.\,Dehornoy}, {Invariants topologiques des orbites p\'eriodiques d'un champ de vecteurs}, Th\`ese de doctorat, \'ENS de Lyon (2011).

\bibitem{FarbMargalit}{\sc B.\,Farb and D.\,Margalit}, {A Primer on Mapping Class Groups}, 
to be published by Princeton Univ.\ Press.  

\bibitem{Farb}{\sc B.\,Farb, C.\,J.\,Leininger, and D.\,Margalit}, {Small dilatation pseudo-Anosovs and 3-manifolds}, {\it Adv.\ Math.} {\bf  228} (2011), 1466--1502, \url{http://arxiv.org/abs/0905.0219}. 

\bibitem{FLP}{\sc A.\,Fathi, F.\,Laudenbach, and V.\,Poenaru}, {Travaux de Thurston sur les surfaces}, {Astérisque} {\bf 66-67}, Soc.\ Math.\ France (1979).

\bibitem{Fomenko}{\sc A.\,T.\,Fomenko}, {Symplectic Geometry}, {\it Adv.\ Stud.\ Contemp.\ Math.} {\bf 5}, Gordon and Breach (1995).

\bibitem{FranksWilliams}{\sc J.\,Franks and R.\,F.\,Williams}, {Braids and the Jones polynomial}, {\it Trans.\ Amer.\ Math.\ Soc.} {\bf 303} (1987), 97--108.

\bibitem{Gabai1}{\sc D.\,Gabai}, {The Murasugi sum is a natural geometric operation}, {\it Contemp.\ Math.} {\bf 20} (1983), 131--143.

\bibitem{Gabai2}{\sc D.\,Gabai}, {Detecting fibred links in $\Sph^3$}, {\it Comment.\ Math.\ Helv.} {\bf 61} (1986), 519--555.

\bibitem{GamGhys}{\sc J.\,M.\,Gambaudo and {\'E}.\,Ghys}, {Signature asymptotique d'un champ de vecteurs en dimension 3}, {\it Duke Math.\ J.} {\bf 106} (2001), 41--79.

\bibitem{GHS}{\sc R.\,W.\,Ghrist, Ph.\,J.\,Holmes, and M.\,C.\,Sullivan}, {Knots and Links in Three-Dimensional Flows}, Lect.\ Notes Math.\,{\bf 1654}, Springer (1997).

\bibitem{GhysMadrid}{\sc \'E.\,Ghys}, {Knots and dynamics}, Proc.\ of the International Congress of Mathematicians {\bf I}, Eur.\ Math.\ Soc.\ (2007), 247--277.

\bibitem{GhysLorenz}{\sc \'E.\,Ghys}, {L'attracteur de Lorenz: paradigme du chaos}, S\'eminaire Poincar\'e XIV (2010), 1--42.

\bibitem{Hironaka1}{\sc E.\,Hironaka}, {Salem--Boyd sequences and Hopf plumbing}, {\it Osaka J.\ Math.} {\bf 43} (2006), 497--516, \url{http://arxiv.org/abs/math/0506602}. 

\bibitem{Hironaka2}{\sc E.\,Hironaka}, {Small dilatation pseudo-Anosov mapping classes coming from the simplest hyperbolic braid}, {\it Alg.\ and Geom.\ Top.} {\bf 10} (2010), 2041--2060, \url{http://arxiv.org/abs/0909.4517}. 

\bibitem{HironakaKin}{\sc E.\,Hironaka and E.\,Kin}, {A family of pseudo-Anosov braids with small dilatation}, {\it Alg.\ Geom.\ Top.} {\bf 6} (2006), 699--738, \url{http://arxiv.org/abs/math/0507012}.

\bibitem{Kauffman}{\sc L.\,H.\,Kauffman}, {On Knots}, Ann.\ of Math.\ Stud.\,{\bf 115}, Princeton Univ.\ Press (1987).

\bibitem{Kawauchi}{\sc A.\,Kawauchi}, {A Survey of Knot Theory}, Birkh\"auser (1996). 

\bibitem{LT}{\sc E.\,Lanneau and J.\,-L.\,Thiffeault}, {On the minimum dilatation of pseudo-Anosov homeomorphisms on surfaces of small genus}, {\it Ann.\ Inst.\ Fourier} {\bf 61} (2011) 105--144, \url{http://arxiv.org/abs/0905.1302}. 

\bibitem{Lehmer}{\sc D.\,H.\,Lehmer}, {Factorization of certain cyclotomic functions}, {\it Ann.\ of Math.\,(2)}
{\bf 34} (1933), 461--469.

\bibitem{Lorenz}{\sc E.\,N.\,Lorenz}, {Deterministic nonperiodic flow}, {\it J.\ Atmospheric Sci.} {\bf 20} (1963), 130--141.

\bibitem{Livingston}{\sc Ch.\,Livingston}, {Table of knots invariants}, \url{http://www.indiana.edu/~knotinfo/}

\bibitem{Milnor}{\sc J.\,Milnor}, {Infinite cyclic coverings}, Conference on the Topology of Manifolds (Michigan State Univ.) (1968), pp.\ 115--133.

\bibitem{Morton}{\sc H.\,Morton}, {Seifert circles and knot polynomials}, 
{\it Math.\ Proc.\ Camb.\ Phil.\ Soc.} {\bf 99} (1986), 107--109. 

\bibitem{Murasugi}{\sc K.\,Murasugi}, {On the genus of the alternating knot, I.}, {\it J. Math. Soc. Japan} {\bf 10} (1958), 94--105.

\bibitem{Penner}{\sc R.\,C.\,Penner}, {Bounds on least dilatations}, {\it Proc.\ Amer.\ Math.\ Soc.} {\bf 113} (1991), 443--450.

\bibitem{SW}{\sc D.\,S.\,Silver, S.\,G.\,Williams}, {Lehmer's question, knots and surface dynamics}, {\it Math. Proc. Cambridge Philos. Soc.} {\bf 143} (2007), 649--661., \url{http://arxiv.org/abs/math/0509068}. 

\bibitem{A:Stallings}{\sc J.\,Stallings}, {Constructions of fibered knots and links}, {\it Symp.\ in Pure math.\ Am.\ Math.\ Soc.} {\bf 32} (1978), 55--59.

\bibitem{Stoimenow}{\sc A.\,Stoimenow}, {Realizing Alexander polynomials by hyperbolic links}, {\it Expo. Math.} {\bf 28} (2010), 133--178.

\bibitem{Thurston}{\sc W.\,Thurston}, {On the geometry and dynamics of homeomorphisms of surfaces}, {\it Bull.\ Amer.\ Math.\ Soc.} {\bf 19} (1988), 417--431.


\end{thebibliography}
\end{document}